\numberwithin{equation}{section}
\newtheorem{thm}{Theorem}[section]
\newtheorem{lemma}[thm]{Lemma}
\newtheorem{cor}[thm]{Corollary}
\newtheorem{conj}[thm]{Conjecture}
\newtheorem{rem}[thm]{Remark}
\newtheorem{theorem}[thm]{Theorem}
\newtheorem{corollary}[thm]{Corollary}
\newtheorem{sublemma}[thm]{Sublemma}
\newtheorem{remark}[thm]{Remark}
\newcommand{\eqnum}{\leavevmode\hfill\refstepcounter{equation}\textup{\tagform@{\theequation}}}\makeatother
\newcommand{\enn}{\mathbb{N}}
\newcommand{\zee}{\mathbb{Z}}
\DeclareMathOperator{\cog}{cog}
\DeclareMathOperator{\Cog}{Cog}
\def\bba{{\textbf{\textit{a}}}}
\def\bbb{{\textbf{\textit{b}}}}
\def\bx{{\textbf{\textit{x}}}}
\def\by{{\textbf{\textit{y}}}}
\def\bd{{\textbf{\textit{d}}}}
\def\bbr{{\textbf{\textit{r}}}}
\newcommand{\red}[1]{{\bf \textcolor{red}{#1}}}
\newcommand{\blu}[1]{{\bf \textcolor{blue}{#1}}}
\newcommand{\xii}{\xi}
\title[Cogrowth sequence of nilpotent groups]{Algebraic and arithmetic properties of the cogrowth sequence of nilpotent groups}
\author[Igor Pak \, and \, David Soukup]{ Igor Pak$^\star$ \, and \, David Soukup$^\star$}
\thanks{\today}
\thanks{\thinspace ${\hspace{-.45ex}}^\star$Department of Mathematics,
UCLA, Los Angeles, CA~90095.
\hskip.06cm
Email:
\hskip.06cm
\texttt{\{pak,soukup\}@math.ucla.edu}}
\newcommand{\bin}{\mathrm{bin}}
\newcommand{\diag}{\mathrm{diag}}
\newcommand{\Tow}{\mathrm{Tow}}
\def\zz{\mathbb Z}
\def\nn{\mathbb N}
\def\rr{\mathbb R}
\def\Ga{\Gamma}
\def\al{\alpha}
\def\be{\beta}
\def\CR{\mathcal R}
\def\CS{\mathcal S}
\def\ce{\mathcal E}
\def\CT{\mathcal T}
\def\ssu{\subset}
\def\wt{\widetilde}
\def\<{\langle}
\def\>{\rangle}
\def\Deg{{\small \text{\rm D}}}
\def\0{{\mathbf 0}}
\def\.{\hskip.06cm}
\def\ts{\hskip.03cm}
\def\nin{\noindent}
\DeclareMathOperator{\SL}{SL}
\DeclareMathOperator{\UT}{UT}
\DeclareMathOperator{\PSL}{PSL}
\definecolor{darkblue}{rgb}{0.0,0,0.7}
\newcommand{\darkblue}{\color{darkblue}}
\definecolor{darkred}{rgb}{0.68,0,0}
\definecolor{darkgreen}{rgb}{0,.38,0}
\newcommand{\darkgreen}{\color{darkgreen}}
\newcommand{\defn}[1]{\emph{\darkblue #1}}
\newcommand{\defng}[1]{\emph{\darkgreen #1}}
\begin{document}

\begin{abstract}
We prove that congruences of the cogrowth sequence in a unitriangular
group \ts $\UT(m,\zz)$ \ts are undecidable.  This is in contrast with
abelian groups, where the congruences of the cogrowth sequence are decidable.
As an application, we conclude that there is no algorithm to present
the cogrowth series as the diagonal of a rational function.
\end{abstract}

\maketitle

\section{Introduction}\label{s:intro}
On a fundamental level, the \emph{growth} and \emph{cogrowth sequences} are used
to extract global properties of finitely generated groups from a local
information.  Although many problems remain unresolved, the \emph{asymptotic
approach} to both sequences has led to a number of spectacular advances
(see below).

The \emph{algebraic approach} to growth and cogrowth sequences is usually stated
in terms of their generating functions (GF).  Do they satisfy an algebraic
equation?  What about a differential-algebraic equation?  Given that both
sequences are sensitive with respect to the change in the generating sets,
one might not think there is much to this problem, and yet there is a
plethora of  positive results and some notable negative results in this
direction (see below).

In this paper we present an arithmetic approach to the cogrowth sequences
of nilpotent groups as a means to obtain negative results for their algebraic
properties.  We first state the main results and historical remarks.
We postpone the applications until Section~\ref{s:series}.

\subsection{Main results} \label{ss:intro-main}
Let $G$ be a fixed finitely generated group, and let $\CS=\CS^{-1}$ be a
symmetric generating set $\<\CS\>=G$. Denote by \ts
$$\cog_\CS(n) \. := \. \big|\big\{(s_1,\ldots,s_n) \in \CS^n \.:\. s_1\cdots s_n=1\big\}\big|\ts.
$$
the number of products of generators equal to one.  The sequence $\{\cog_\CS(n)\}$ is called
the \defn{cogrowth sequence}.  It can be viewed as
the number of closed walks of length~$n$ in the  \emph{Cayley graph} \ts $\Ga(G,\CS)$.
The \defn{unitriangular group} \ts $\UT(m,\zz)$ \ts is the (nilpotent) group of $m\times m$ upper
triangular 
matrices with $1$'s on the diagonal.

\smallskip

\begin{theorem}[Main theorem] \label{t:maintheorem}
There exist integers \ts $m\ge 3$, \ts $a\ge 1$, and a prime \ts $p$, such that
the following problem is \underline{undecidable}\ts: \ts
Given symmetric generating sets \ts $\mathcal{S},  \ts \mathcal{T}$ \ts in \ts $\UT(m,\zee)$, determine whether
\[
\forall  \ts n\in \nn \ : \
\cog_{\mathcal{S}}(n) \ts \equiv \ts \cog_{\mathcal{T}}(n) \ \mod p^a\ts.
\]
Moreover, the result holds for \ts $p=2$, \ts $a=40$, and some \ts $m \leq 9.6 \cdot 10^{85}$.
\end{theorem}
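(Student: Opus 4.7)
I would reduce from Hilbert's tenth problem in a bounded-variable form. By the MRDP theorem in a sharp version (Matiyasevich--Robinson or Jones), there is a fixed polynomial $P(t;x_1,\ldots,x_k)\in\zee[t,x_1,\ldots,x_k]$ of bounded degree and number of variables for which the set $\{t\in\nn:P(t;\cdot)=0\text{ has an integer solution}\}$ is undecidable. The goal is to construct, effectively and uniformly in $t$, two symmetric generating sets $\CS_t,\CT_t$ of a single $\UT(m,\zee)$ such that
\[
\forall n\in\nn\,:\,\cog_{\CS_t}(n)\equiv\cog_{\CT_t}(n)\pmod{p^a} \quad\Longleftrightarrow\quad P(t;\cdot)=0 \text{ has no integer solution.}
\]
The driving fact is that $\UT(m,\zee)$ is a torsion-free nilpotent group of class $m-1$ whose Malcev/Hall coordinates turn any product $s_{i_1}\cdots s_{i_n}$ of elementary unitriangular matrices into an $m$-tuple of integer polynomials (of degree up to $m-1$) in parameters read off the word. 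Choosing $\CS_t$ so that these polynomials realize $P(t;\cdot)$ together with its auxiliary variables, closed walks in $\Ga(\UT(m,\zee),\CS_t)$ correspond to integer tuples annihilating $P(t;\cdot)$; the companion generating set $\CT_t$ is a ``decoy'' that reproduces the same bulk cogrowth count except for exactly this solution-counting contribution.

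\textbf{Mod $p^a$ passage and the main obstacle.} Converting the exact-equality statement into a statement mod $p^a$ is the heart of the matter. I would expand $\cog_\CS(n)\bmod p^a$ via characters of the finite nilpotent $2$-group $\UT(m,\zee/2^{40}\zee)$, producing Gauss-type exponential sums in the Malcev polynomials of the encoding. The specific choice $p=2$, $a=40$ has to be large enough to force a \emph{non-cancellation} statement: whenever $P(t;\cdot)$ admits an integer solution, the difference $\cog_{\CS_t}(n)-\cog_{\CT_t}(n)$ must have $2$-adic valuation strictly below $40$ for some $n$. Establishing this controlled non-cancellation is the principal obstacle, and it relies essentially on the Malcev polynomials being genuinely nonlinear (degree up to $m-1$). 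This also explains the contrast with the abelian case noted in the abstract: there the Malcev coordinates are linear, so the Diophantine encoding collapses to a $\zee$-linear lattice question, which is decidable (via Smith normal form and standard finite-automaton techniques applied to the residues).

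\textbf{Quantitative bound.} Finally, the explicit bound $m\le 9.6\cdot 10^{85}$ would be obtained by substituting an explicit small-variable universal Diophantine polynomial (for instance, Jones's $9$-variable version) into the Malcev construction and bounding the matrix dimension required to realize its degree through iterated commutators in $\UT(m,\zee)$. No sharp optimization is attempted; the exponent $85$ simply reflects the sizes of presently known universal Diophantine polynomials together with the degree-to-dimension conversion inherent in Malcev coordinates.
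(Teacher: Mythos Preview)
Your high-level strategy---reduce from Hilbert's tenth problem by encoding integer roots of a polynomial into a congruence between two cogrowth sequences of $\UT(m,\zee)$---is exactly the paper's strategy. But the technical mechanism you propose has a genuine gap. Your plan to ``expand $\cog_\CS(n)\bmod p^a$ via characters of $\UT(m,\zee/2^{40}\zee)$'' does not work as stated: $\cog_\CS(n)$ counts words equal to the identity in the \emph{infinite} group, not in any finite quotient, so characters of the quotient do not compute $\cog_\CS(n)\bmod 2^{40}$. You identify the non-cancellation step as the principal obstacle, but offer no concrete device to overcome it; the Gauss-sum idea is left as a hope rather than an argument.

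The paper's mechanism is entirely different and much more combinatorial. First, it builds explicit matrices $P,Q,A_1,\ldots,A_k$ (via Jordan blocks) so that $PAQA^{-1}P^{-1}AQ^{-1}A^{-1}=I+f(\bx)E_{1m}$ when $A=A_1^{x_1}\cdots A_k^{x_k}$, then enlarges them so that any cogrowth word of the shape $PW_1QW_2P^{-1}W_3Q^{-1}W_4$ forces $W_i$ to be exactly of this form. Second---and this is the idea you are missing---the mod $p^a$ filtering is achieved by taking \emph{weighted multisets} of generators: $\CS$ contains $u$ copies of $P^{\pm1},Q^{\pm1}$ and $u^{10}$ copies of the standard generators, and $\CT=\CS\cup u^5\cdot\{R^{\pm1}\}$ with $u=16$. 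Then $\cog_\CT(n)-\cog_\CS(n)\bmod u^{10}$ automatically kills every word except those with exactly one $R$ and one cyclic copy of $PQP^{-1}Q^{-1}$, reducing to a count $c_{n-1}$ of the ``good'' words. Third, a separate root-separation lemma replaces $f$ by a polynomial whose integer roots all have distinct and even $\ell^1$-norms, so that the minimal root contributes $c=1$ and the $2$-adic valuation of the difference stays below $40$. Without the multiset trick there is no way to isolate the relevant words modulo $p^a$, and without root separation several roots of equal norm could push the valuation up and destroy the non-cancellation. Finally, the bound $m\le 9.6\cdot 10^{85}$ comes from Jones's $(\deg,\#\text{vars})=(96,21)$ pair (not a $9$-variable version) fed through explicit size estimates in the matrix lemmas.
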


\smallskip

This is a rare undecidable problem for the relatively tame class of nilpotent groups.
The proof uses a technical yet explicit embedding of general Diophantine equations
into the cogrowth. Solvability of Diophantine equations is famously
undecidable by the negative solution of \emph{Hilbert's 10th problem} (the
Matiyasevich, Robinson, Davis and Putnam theorem), see e.g.\ \cite{Mat93}.

\smallskip

Our main theorem should be compared with the following result:

\smallskip

\begin{thm}\label{t:abelian}
Let $a\ge 1$ be an integer, let \ts $p$ \ts be a prime, and let
\ts $G$ \ts be a finitely generated abelian group.
The following problem is \underline{decidable}\ts: \ts Given finite
symmetric generating sets \ts $\mathcal{S},  \ts \mathcal{T}$ \ts in \ts $G$,
determine whether
\[
\forall  \ts n\in \nn \ : \
\cog_{\mathcal{S}}(n) \ts \equiv \ts \cog_{\mathcal{T}}(n) \ \mod p^a\ts.
\]
\end{thm}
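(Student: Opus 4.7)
I would prove the theorem by showing that for every finitely generated abelian $G$ and every symmetric generating set $\mathcal{S}$, the sequence $(\cog_{\mathcal{S}}(n) \bmod p^a)_{n\ge 0}$ is $p$-automatic with a DFA that is effectively computable from $\mathcal{S}$. Granted this, the congruence in the statement is decidable because equivalence of two $p$-automatic sequences is classically decidable: form the product automaton on base-$p$ input and check that no reachable state produces differing outputs.

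The reduction to $G = \zz^d$ is routine via the structure theorem $G \cong \zz^d \oplus H$ with $H$ finite: either by Fourier expansion of the identity indicator on $H$ (which replaces integer constant-terms by constant-terms in the cyclotomic ring $\zz[\zeta_{|H|}]/p^a$, within which the same tools apply), or by presenting $G$ as a quotient $\zz^k/L$ and lifting $\mathcal{S}$ to a bounded subset of $\zz^k$. For the lattice case itself, one uses the constant-term identity
$$\cog_{\mathcal{S}}(n) \, = \, [x_1^0 \cdots x_d^0] \. f_{\mathcal{S}}(x)^n, \qquad f_{\mathcal{S}}(x) \. := \. \sum_{s\in \mathcal{S}} x^s \ \in \ \zz[x_1^{\pm 1}, \ldots, x_d^{\pm 1}].$$
A monomial shift $f_{\mathcal{S}}(x) \mapsto (x_1\cdots x_d)^N f_{\mathcal{S}}(x)$ followed by a routine introduction of one extra variable converts $\sum_n \cog_{\mathcal{S}}(n)\,t^n$ into the diagonal of a rational function over $\zz$ whose denominator evaluates to $1$ at the origin, placing the sequence in the Furstenberg--Deligne class of diagonals.

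The key ingredient is then the theorem of Rowland and Yassawi, an effective prime-power refinement of Furstenberg's theorem: the coefficient sequence of such a diagonal, reduced mod $p^a$, is $p$-automatic, and the corresponding DFA is explicitly produced from the rational function by iterating a Cartier-type operator on a finitely generated $\zz/p^a\zz$-module and collecting the reachable elements as states. Applied in turn to $\mathcal{S}$ and to $\mathcal{T}$, this yields DFAs $A_{\mathcal{S}}$ and $A_{\mathcal{T}}$ whose outputs on the base-$p$ digits of $n$ are $\cog_{\mathcal{S}}(n) \bmod p^a$ and $\cog_{\mathcal{T}}(n) \bmod p^a$, after which the decidability of equivalence finishes the argument.

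The main obstacle is making each step of this chain uniformly effective in the input $(\mathcal{S}, \mathcal{T}, p, a)$: the quotient presentation of $G$ and the character decomposition when $H \ne 0$, the monomial shift and diagonalization, and the Rowland--Yassawi automaton construction. Each is explicit in its source, but combining them cleanly—especially keeping the cyclotomic-ring version of the automaton and the integer version of the congruence commensurable when $\gcd(|H|,p) > 1$—is where most of the actual work lies.
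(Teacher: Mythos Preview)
Your proposal is correct and follows essentially the same route as the paper: express the cogrowth series as the diagonal of an explicit rational function, then invoke a prime-power automaticity/decidability result for such diagonals. The only real differences are in the packaging. The paper cites Kuksov's theorem for the diagonal representation of $\Cog_{\mathcal{S}}$ over a general finitely generated abelian group (so there is no separate reduction to $\zz^d$ or worry about the torsion part), and then invokes the Adamczewski--Bell decidability theorem for diagonals modulo $p^a$ applied to the single series $\Cog_{\mathcal{S}}-\Cog_{\mathcal{T}}$, testing each nonzero residue class. You instead build the diagonal by hand via the constant-term identity and appeal to Rowland--Yassawi for an explicit $p$-automaton, then decide equality of the two automatic sequences by a product construction. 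The Adamczewski--Bell and Rowland--Yassawi results are close cousins, so the mathematical content is the same; your version is a bit more hands-on, while the paper's sidesteps your concern about $\gcd(|H|,p)>1$ by quoting Kuksov, whose construction already covers the general abelian case.
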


\smallskip

This result is derived from a remarkable theorem of Adamczewski and Bell
\cite{AB13}, which in turn extends a series of results by Furstenberg~\cite{Fur67},
Deligne \cite{Del84}, Denef and  Lipshitz~\cite{DL87}, on diagonals
of rational functions modulo prime powers.  Our own motivation for the main theorem comes from the opposite direction, and can be stated as follows.

The \defn{cogrowth series} \ts for the group $G=\<\CS\>$ is defined as
$$
\Cog_\CS(t) \, := \, 1 \. + \. \sum_{n=1}^{\infty} \. \cog_\CS(n) \ts t^n\ts.
$$
Let
$$B(x_1,\ldots,x_k) \, = \, \sum_{(n_1,\ldots,n_k)\in \nn^k} \.
b(n_1,\ldots,n_k) \, x_1^{n_1}\cdots x_1^{n_k} \. \in \. \zz[[x_1,\ldots,x_k]]
$$
be a multivariate generating function. The \defn{diagonal} \ts of $B$ is defined as \ts
$\sum_{n\ge 0} \. b(n,\ldots,n) \ts t^n$.

\smallskip

\begin{thm}  \label{t:diag}
For a fixed sufficiently large integer \ts $m$, the following problem is \ts
\underline{not computable}: \ts
Given a symmetric generating set \ts $\CS$ \ts of the unitriangular
group \ts $\UT(m,\zz)$, write the cogrowth series \ts $\Cog_\CS(t)$ \ts
as a diagonal of a rational function \ts $P/Q$,
for some polynomials \ts $P,Q\in \zz[x_1,\ldots,x_k]$, and \ts $k\ge 1$.
Moreover, the result holds for some \ts $m \leq 9.6 \cdot 10^{85}$.
\end{thm}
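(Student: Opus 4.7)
The plan is to derive Theorem~\ref{t:diag} from Theorem~\ref{t:maintheorem} by a reduction. Suppose for contradiction that there is an algorithm $\mathcal{A}$ which, on input a symmetric generating set $\mathcal{S}$ of $\UT(m,\zz)$, outputs polynomials $P, Q \in \zz[x_1,\ldots,x_k]$ (for some $k \ge 1$) such that $\Cog_\mathcal{S}(t) = \operatorname{diag}(P/Q)$. Let $m, a, p$ be the constants provided by Theorem~\ref{t:maintheorem}.

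The key ingredient is an \emph{effective} version of the Adamczewski--Bell theorem \cite{AB13}: given $P, Q \in \zz[x_1,\ldots,x_k]$ with $Q(0) \ne 0$ and a prime power $p^a$, one can algorithmically construct a finite $p$-automaton $\mathcal{M}_{P,Q,p,a}$ whose output on input $n$ (read in base $p$) equals the $n$-th coefficient of $\operatorname{diag}(P/Q)$ reduced modulo $p^a$. Granting this, the reduction is immediate. Given symmetric generating sets $\mathcal{S}, \mathcal{T}$ of $\UT(m,\zz)$, apply $\mathcal{A}$ to each to produce diagonal representations, then apply the effective Adamczewski--Bell construction to obtain finite $p$-automata $\mathcal{M}_\mathcal{S}$ and $\mathcal{M}_\mathcal{T}$ computing $\cog_\mathcal{S}(n) \bmod p^a$ and $\cog_\mathcal{T}(n) \bmod p^a$. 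Equivalence of the output sequences of two finite automata is classically decidable: construct an automaton for the pointwise difference sequence and test whether it ever outputs a nonzero value. Composing the three steps yields a decision procedure for the congruence problem of Theorem~\ref{t:maintheorem}, a contradiction. The bound $m \le 9.6 \cdot 10^{85}$ is inherited directly from that theorem, since the same $m$ works for the reduction.

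The main obstacle is confirming the effectivity of the Adamczewski--Bell construction, which is typically quoted in a non-constructive form. Its proof, however, is assembled from constructive classical ingredients: Furstenberg's realization of diagonals of rational functions over $\ff_p$ as algebraic power series, Christol's algorithm converting algebraic power series over $\ff_p$ into $p$-automata, and a concrete Witt-vector-style lift of these constructions to modulus $p^a$. Each step is explicitly algorithmic; while extracting sharp bounds on the size of the resulting automaton would require additional care, for the mere existence of a decision procedure --- which is all that is needed here --- this is ample. A secondary subtlety is to choose a normalization ensuring that $\mathcal{A}$ is actually attempting to compute the correct object (for instance, handling $k$ variable in the output), but this is harmless since we may assume $\mathcal{A}$ always produces a syntactically valid pair $(P,Q)$ and the subsequent steps detect any correctness failure via the final equivalence check.
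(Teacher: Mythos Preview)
Your proof is correct and follows essentially the same reduction as the paper: assume diagonal representations are computable, feed them into Adamczewski--Bell to decide the congruence problem, and contradict Theorem~\ref{t:maintheorem}. The only difference is that the paper quotes Adamczewski--Bell directly as a black-box decidability statement (Theorem~\ref{t:AB}) applied to the single diagonal $\Cog_\CS - \Cog_\CT$, whereas you unpack the effectivity by constructing $p$-automata for each sequence separately and testing their equivalence; this is a minor packaging variation, not a different argument.
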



\smallskip

In other words, either some cogrowth series are not diagonal, or all of them
are diagonals, but the proof of that result would be ineffective to make the
diagonals uncomputable.  Let us mention a quick motivation for this problem
(see more on this below).

\defn{Kontsevich's question}, \ts  for the case of nilpotent groups (see below),
asks whether the cogrowth series \ts $\Cog_\CS(t)$ \ts is always \emph{D-finite},
i.e.\ a solution of an ODE with polynomial coefficients.  Christol's
Conjecture~\ref{conj:Christol} (see below), reduces the problem to
whether \ts $\Cog_\CS(t)$ \ts is always a diagonal of a rational function.
Until Theorem~\ref{t:diag}, no progress has been made in this direction.

\smallskip

\begin{rem}{\rm
Let us further discuss our Theorem~\ref{t:diag} in context of Kontsevich's
question. First, it is possible and even likely, that already for the
\emph{Heisenberg group} \ts $\UT(3,\zz)$ \ts with four standard generators,
the cogrowth series is not a diagonal (and non-D-finite), see~$\S$\ref{ss:finrem-Heis}.
It is also possible and even likely, that \emph{for all} \ts $m\ge 3$, and \emph{all}
symmetric generating sets \ts $\CS$ \ts of \ts $\UT(m,\zz)$, the cogrowth series
is not a diagonal.  Theorem~\ref{t:diag} gives no contradiction with that.

On the other hand, it is possible that \emph{for some} \ts $\CS$ \ts the
cogrowth series is a diagonal.  It is also possible that \emph{for all} \ts $\CS$ \ts
the cogrowth series is a diagonal.  What Theorem~\ref{t:diag} shows is that there is
no \defng{constructive proof} \ts that the cogrowth series it is
always a diagonal.}
\end{rem}

\smallskip

\subsection{Historical background}\label{ss:intro-hist}
Here we give a very brief overview of the vast literature on the subject.

\smallskip

\nin
{\small $(1)$} \.
The growth of groups goes back to the works of Schwarz (1955) and
Milnor (1968), and is now a staple of Geometric Group Theory \cite{Har21}.
Notably, all nonamenable groups have exponential growth, but not vice versa.
\emph{Gromov's theorem} \ts proves that the growth is polynomial if and only if the
group is virtually nilpotent.  We refer to \cite[Ch.~VI,\ts VII]{Har00}
for an extensive introduction, and to \cite{Mann} for a detailed treatment.

In probabilistic context, the cogrowth was first introduced by P\'olya \cite{Polya},
to study transience and recurrence of random walks in $\zz^d$, via
asymptotic estimates on the \defng{return probability} \ts $\cog_\CS(n)/|\CS|^n$,
and later by Kesten~\cite{Kes59} in connection with \emph{amenability}.
In Group Theory, the study of cogrowth was initiated by Grigorchuk \cite{Gri80}
and extended by Cohen \cite{Coh82} and others.  We refer to \cite{Woe00}
for a comprehensive presentation of both group theoretic and probabilistic results.

\smallskip

\nin
{\small $(2)$} \.
The generating function (GF) approach became popular after the
\emph{Golod--Shafarevich theorem} on the growth of algebras \cite[$\S$3.5]{Ufn95}.
In a remarkable development, the \defng{growth series} (the GF for the growth
sequence) is shown to be rational for every generating set of many classes of groups,
including virtually abelian~\cite{Ben83} and hyperbolic~\cite{Can84}.

For other classes of groups, growth series can be more complicated.
Notably, there are wreath products of abelian groups for which growth series are algebraic
but not rational \cite{Par92}.  For the  fundamental group of a $3$-dimensional
\ts $\PSL(2,\rr)$-manifold,
which is a $\zz$-extension of a hyperbolic group, the growth series is rational
for one generating set and non-algebraic for another \cite{Sha94}.
It is known (see e.g.\ \cite{GP17}) that the growth series is non-algebraic
(in fact, non-D-finite), for all groups of intermediate growth.
See \cite[$\S$4]{GH97} for further examples and many references.

For nilpotent groups, the growth series is especially interesting.
In a breakthrough paper \cite{Sto96},  Stoll gave an example of a
\emph{higher Heisenberg group} \ts $H_2 \ssu \UT(4,\zz)$ \ts and two
generating sets so that one growth series is rational while another
is non-algebraic.  Curiously, for the (usual) Heisenberg group
\ts $H_1 = \UT(3,\zz)$, the growth series is always rational \cite{DS19}.

\smallskip

\nin
{\small $(3)$} \.
After  P\'olya's work, \emph{lattice walks} \ts on $\zz^d$ have been
intensely studied for various generating sets~$\CS$ (called \emph{steps}).
The corresponding return probabilities are always diagonals of rational
functions, but this stops being true when  geometric constraints are added.
These walks continue to be intensely studied in Enumerative and
Asymptotic Combinatorics, see e.g.\ \cite{Bou06,Mis20}.

For free groups $F_k$, the cogrowth series are always algebraic. This was
shown independently in \cite{Hai93} in a combinatorial context,
and in \cite{Aom84,FTS94} in a probabilistic context. The cogrowth series
is algebraic for many free products of groups \cite{BM20,Kuk2},
and D-finite for Baumslag--Solitar groups \ts BS$(N,N)$ \cite{ERRW}.

In recent years, the interest to the problem came from Kontsevich's question
whether the cogrowth series is always D-finite on linear groups, see \cite{Sta14}.
By the \emph{Tits alternative} and the \emph{Milnor--Wolf theorem},
Kontsevich's question is reduced to three cases: \ts virtually nilpotent
groups, virtually solvable groups of exponential growth, and groups containing
free group $F_2$ as a subgroup.  Our state of knowledge is very different
in these three cases.

For solvable groups the question was resolved in the negative in~\cite{GP17} by
the following argument.
Let $G$ be a solvable group of exponential growth and bounded Pr\"ufer rank.
It was proved by Pittet and Saloff-Coste in \cite{PS03}, that for
every symmetric generating set $\CS$, the cogrowth satisfies
$$
|\CS|^n \ts e^{-\al n^{1/3}} \. \le \. \cog_\CS(n) \. \le \. |\CS|^n \ts e^{-\be n^{1/3}}\..
$$
The \emph{Birkhoff--Trjitzinsky theorem}\footnote{There are gaps in the proof
of this result and it remains an open problem in full generality,
see a discussion in \cite[$\S$VIII.7]{FS09} and \cite[$\S$9.2]{Odl95}. For
integral sequences which grow at most exponentially, the gaps were filled in
a series of paper, see \cite[$\S$5.1]{GP17}.}
then implies that the cogrowth series not D-finite \cite{GP17}.
An easy example of such group is \ts $\zz\ltimes \zz^2\ssu \SL(3,\zz)$, see
e.g.\ \cite[$\S$15.B]{Woe00}.
In response to a solution in \cite{GP17}, Katzarkov, Kontsevich and Stanley
independently asked if the cogrowth series is always D-algebraic.\footnote{Personal
communication, 2015.}  This strengthening of Kontsevich's question
remains unresolved.

In fact, the bounded Pr\"ufer rank assumption above is not necessary for the
conclusion.
Recently, Bell and Mishna used an analytic argument \cite{BM20} to show
that, for all amenable groups of superpolynomial growth, the cogrowth series
is non-D-finite, resolving the conjecture in~\cite{GP17} and completing this case
of Kontsevich's question.

For nilpotent groups, the subject of this paper,
the \emph{Bass--Guivarc'h formula} \ts computes the polynomial degree \ts
$d(G)$ \ts of the growth sequence.  Several notable probabilistic results
can be combined to give the following asymptotics
$$C_1 \ts |\CS|^n \ts n^{-d(G)/2} \. \le \. \cog_{\CS}(n) \. \le \. C_2 \ts |\CS|^n \ts n^{-d(G)/2},
$$
see \cite[$\S$3.B,$\ts\S$15.B]{Woe00} and references therein. Now
\emph{Jungen's theorem} \cite{Jun31}, implies that the cogrowth series
is not algebraic for even~$d(G)$.  For odd \ts $d(G)\ge 5$, only a weaker
result is known, that the cogrowth series is not $\rr_+$-algebraic; this follows
from \cite[Thm~3]{BD15}.  At this point the analytic arguments lose their
power as there are numerous examples of D-finite and even algebraic GFs with the
same asymptotics as the cogrowth sequences, see e.g.\ \cite{BD15,FS09}.

\smallskip

\nin
{\small $(4)$} \. \emph{Hilbert's 10th problem} was resolved by Matiyasevich (1970)
building on the earlier work by Davis, Putnam and Robinson (1949--1969).  Solvability
of Diophantine equations over various rings is now fundamental in both Logic
and Number Theory, and applied throughout mathematical sciences, from
Group Theory to Integral Programming.
We refer to \cite{Mat93} for a thorough treatment, to \cite{Poo08} for
a short note introduction to recent developments, and to \cite{MF19}
for an introductory textbook.

\smallskip

\nin
{\small $(5)$} \.
The study of classes of GFs was initially motivated by
applications in Number Theory and Analysis, but came to prominence
in connection to Formal Languages Theory.  The GF for the number of
accepted paths by a \emph{Finite State Automaton} is always rational
(see e.g.\ \cite[$\S$4.7]{Sta99}), and algebraic for a \emph{Pushdown
Automaton} (see references in~\cite{BD15}).

The class of diagonals of rational functions coincides with the class
of GFs for (balanced) \defng{binomial sums}, see \cite{BLS17,Gar09}.
This class received much attention after the work of Wilf and Zeilberger on
binomial identities \cite{WZ92,Zei90}, which made heavy use of the
fact that they are D-finite (\emph{holonomic} in their terminology).

Finding an explicit presentation of a GF as a diagonal of a rational
function is of great interest in Computer Algebra due to its
many applications, see e.g.\ \cite{BLS17,Mel21}.  These range from
congruences of combinatorial sequences, see \cite{AB13,RY15}, to
asymptotic analysis, see \cite{BMPS18,MS21}.  We should note that there
can be more than one way a function can be presented as diagonal,
see e.g.~\cite{RY15}.  On the other hand, for many series finding
its presentation as a diagonal is a challenging open problem,
see~$\S$\ref{ss:finrem-Christol}. Our Theorem~\ref{t:diag} proving
uncomputability of such presentation is the first negative result
in this direction.

Proving that a series is not D-finite
(not D-algebraic) is a major challenge, of interest both in Enumerative
Combinatorics \cite{Pak18} and Differential Algebra \cite{ADH17}.
Outside of analytic arguments, an Automata Theory approach was developed
in \cite{GP15}, which proves non-D-finiteness for GFs of various permutation classes.
In the context of cogrowth series, \cite{GP17} uses this approach to
prove non-D-finiteness in the (less interesting) case of
\emph{non-symmetric} \ts generating sets of nonamenable groups.

\smallskip

\nin
{\small $(6)$} \.
The undecidability approach to algebraic properties of cogrowth series
appears to be new.  It is also surprising, since both the
word, the conjugacy and even the isomorphism problems are
decidable for finite nilpotent groups \cite{GS80} (see also discussion
in \cite[$\S$3.2]{Sap11}).  On the other hand, the
solvability of a \emph{system of equations} \ts is undecidable
for \ts $H_1=\UT(3,\zz)$ \ts \cite{DLS15,GMO20}, as well group
membership in the product of cyclic subgroups of \ts $\UT(m,\zz)$ \ts 
\cite{Loh15}. The proofs of these results are similarly based on 
Hilbert's 10th problem, cf.~$\S$\ref{ss:finrem-Heis}.

\smallskip

\subsection{Paper structure}  \label{ss:intro-structure}
After a few notation in Section~\ref{s:notation},
we start with a technology of generating functions in Section~\ref{s:series}.
There, we give quick proofs of Theorem~\ref{t:abelian} from the Adamczewski--Bell
theorem (Theorem~\ref{t:AB}), and of Theorem~\ref{t:diag} from the
Main Theorem~\ref{t:maintheorem}.   There, we also formulate
Theorem~\ref{t:dalg} on a possible non-D-algebraic cogrowth series
for $\UT(m,\zz)$.
We then prove Main Theorem~\ref{t:maintheorem} in a lengthy Section~\ref{s:proof-main}.
The proof of Theorem~\ref{t:dalg} is given in Section~\ref{s:proof-dalg}.
We conclude with final remarks and open problems in Section~\ref{s:finrem}.

\medskip

\section{Notation}\label{s:notation}
We use the convention that \textbf{bold} letters represent multi-indices,
e.g.\ $\bx = (x_1,\ldots,x_k)\in \zz^k$.  We use \ts $|\bx|:=|x_1|+\ldots + |x_k|$ \ts
to denote the \ts $\ell^1$ \ts norm of~$\bx$. 

For vectors \ts $\bba,\bbb\in\zee^k$,
denote
\begin{equation}\label{multiindexbimonal}
\binom{\bba}{\bbb} \, := \, \binom{a_1}{b_1} \. \cdots \. \binom{a_k}{b_k}\ts.
\end{equation}

The unipotent group $\UT(m,\zee)$ is the group of all $m\times m$
upper-triangular integer matrices with ones on the diagonal:
{\small
\[
\begin{bmatrix}
1 & \zz & \zz & \cdots & \zz & \zz \\
0 & 1 & \zz & \cdots & \zz & \zz \\
0 & 0 & 1 & \cdots & \zz & \zz \\
\vdots & \vdots & \vdots &\ddots & \vdots & \vdots \\
0 & 0 & 0 & \cdots & 1 & \zz \\
0 & 0 & 0 & \cdots & 0 & 1 \\
\end{bmatrix}
\]
}

Since we will be working with many families of indexed matrices, we will adopt the convention that $[A]_{ij}$ refers to the $(i, j)$-th entry of matrix~$A$. Let $I_n$ be the $n\times n$ identity matrix, and $E_{i, j}$ be the matrix that is 1 in the $(i, j)$-th coordinate and $0$ otherwise.

When working with matrices, we write \ts $XY$ \ts to denote the product of matrices $X$ and~$Y$.
We use \ts $X \circ Y$ \ts to denote the word with matrices as letters. Lastly, we  use $\oplus$ for the operation of making a block-diagonal matrix out of smaller matrices:
\[
X \oplus Y \, := \, \begin{bmatrix}
X & 0 \\
0 & Y
\end{bmatrix} \,.
\]
We use \ts $X\oplus^k Y$ \ts to mean that $Y$ is added $k$ times: \ts $X\oplus Y \oplus \cdots \oplus Y$.
Finally, a word $(s_1\cdots s_n)$ in the generators $s_i\in \CS$, is called a \emph{cogrowth word},
if the product \ts $s_1\cdots s_n=1$.

\medskip

\section{Cogrowth series} \label{s:series}

\subsection{Classes of generating functions} \label{ss:series-classes}
Let $\{a_n\}$ be an integer sequence, and let
$$A(t) \, := \, \sum_{n=0}^{\infty} \. a_n \ts t^n \. \in \ts \zz[[t]]
$$
be the corresponding \emph{generating function} (GF).  We write $a_n=[t^n] A$
to denote the coefficient of the GF.   For a multivariate GF
$B\in \zz[[x_1,\ldots,x_k]]$, the \emph{diagonal} \ts of~$B$ is defined as
$$
\diag \ts B  \, := \, \sum_{n=0}^{\infty} \. \big(\big[x_1^n\cdots x_k^n\big] \ts B\big) \ts t^n \. \in \ts \zz[[t]]\ts,
$$
the GF for diagonal coefficients of~$B$.

\medskip

For $A\in \zz[[t]]$, we define the following five main classes of GFs, see e.g.\ \cite[Ch.~6]{Sta99}:

\medskip

\nin
\qquad {\it \textbf{Rational}}: \qquad $A(t) =P(t)/Q(t)$, \ for some \. $P,\ts Q\in \zz[t]$,

\smallskip

\nin
\qquad {\it \textbf{Algebraic}}: \ \quad \.\ts $c_0 A^k \ts + \ts c_1 A^{k-1}\ts+\ts\ldots
\ts+ \ts c_k=0$, \ for some \, $k\in \nn$, \. $c_i\in \zz[t]$,

\smallskip

\nin
\qquad {\it \textbf{Diagonal}}: \ \ \quad \.\ts $A(t) = \diag \ts P/Q$, \ for some \. $P,\ts Q\in \zz[x_1,\ldots,x_k]$, \. $k\ge 1$,

\smallskip

\nin
\qquad {\it \textbf{D-finite}}: \quad \quad \.\ts $c_0A \ts + \ts c_1A^\prime\ts + \ts\ldots\ts + \ts c_kA^{(k)}$,
\ for some \, $k\in \nn$, \. $c_i\in \zz[t]$,

\smallskip

\nin
\qquad {\it \textbf{D-algebraic}}:  \ \. $Q\bigl(t,A,A,\ldots,A^{(k)}\bigr)=0$, \ for some \, $k\in \nn$, \. $Q\in \zz[t,x_0,x_1,\ldots,x_k]$.

\medskip

\nin
It is well known and easy to see that
$$
 \text{\em Rational} \ \subsetneq \   \text{\em Algebraic} \ \subsetneq \  \text{\em Diagonal} \ \subsetneq \  \text{\em D-finite} \ \subsetneq \  \text{\em D-algebraic}
$$

\medskip

It is known that the cogrowth series \. $\Cog_\CS(t) \in$ {\em Rational} \. if and
only if $G$ is finite \cite{Kuk1}.
For example, for $G=\zz$ and $\CS=\{\pm 1\}$, we have:
$$\Cog_\CS(t) \, = \, \sum_{n=0}^{\infty} \. \tbinom{2n}{n} \ts t^{2n}
\,  = \, \diag \. \frac{1}{1-x-y} \,  = \,
\frac{1}{\sqrt{1-4t^2}} \ \in \,   \text{\em Algebraic}\..
$$
For $G=\zz^2$ and $\CS=\{(\pm 1, 0), \. (0,\pm1)\}$, the
cogrowth series \.
$\Cog_\CS(t)  = \sum_{n\ge 0} \tbinom{2n}{n}^2 \ts t^{2n}$  \. is
diagonal but not algebraic.\footnote{This was observed by Furstenberg \cite{Fur67}
via Schneider's theorem on transcendental numbers.  As noted in  \cite[p.~137]{Mel21},
this is also immediate from \ts
$\binom{2n}{n}^2\sim \frac{1}{\pi \ts n} \ts 16^n$.
Jungen's theorem can be used to show that the cogrowth series is non-algebraic
\emph{for all} \ts generating sets of~$\zz^2$.}
Diagonal GFs have coefficients which grow at most exponentially, so \ts
$\sum_{n\ge 0} n! \ts t^n$ \ts is D-finite but not a diagonal.
\defn{Christol's Conjecture} \ts claims that this is the only restriction:

\smallskip

\begin{conj}[Christol \cite{Chr90}] \label{conj:Christol}
Let \ts $A(t)=\sum_{n\ge 0} a_n t^n \in \zz[[t]]$.
Let \ts $|a_n| <c^n$ \ts for all $n\in \nn$ and some $c>0$,  and let \ts $A\in \text{D-finite}$.
Then \ts $A\in \text{Diagonal}$.
\end{conj}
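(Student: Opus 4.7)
This statement is Christol's conjecture, a central open problem in arithmetic D-module theory that has resisted solution since 1990; any proof proposal here is therefore candidly speculative. Let me nonetheless sketch what a serious line of attack would look like.

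The plan is to follow the arithmetic-geometric strategy developed by Christol, Andr\'e, and Dwork--Katz. First I would argue that the hypothesis $|a_n|<c^n$ together with D-finiteness forces $A$ to be a \emph{globally bounded} $G$-function in Siegel's sense: after a rescaling of $t$, all coefficients lie in the ring of integers of some number field, and all $p$-adic radii of convergence at finite primes are at least one. A minimal annihilating operator $L\in\zz[t,\partial_t]$ then satisfies the Galochkin condition, so $L$ is a $G$-operator in the sense of Andr\'e.

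Second, Andr\'e's structure theorem supplies strong local information: $L$ has only Fuchsian singularities, its exponents at each singular point are rational, and its local monodromy is quasi-unipotent. The heart of the plan is to upgrade this local data to a \emph{geometric realization}, showing that $A$ arises as the period of some smooth algebraic family $\pi\colon X\to U$ defined over $\qqq$, that is, $A(t)=\int_{\gamma(t)}\omega_t$ for a suitable variation of Hodge structure. Given such a realization, one invokes Deligne's comparison between de Rham and Betti cohomology together with the fact that period integrals of algebraic families over $\qqq$ admit series expansions as diagonals of rational functions on the total space of $\pi$, thereby producing the desired presentation $A=\diag(P/Q)$ with $P,Q\in\zz[x_1,\ldots,x_k]$.

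The main obstacle is precisely this last, geometric, step. There is at present no effective procedure which, given an abstract $G$-operator $L$, produces an algebraic variety whose Picard--Fuchs operator is $L$; this is essentially the inverse problem for $G$-functions, and it is closely intertwined with the Grothendieck--Katz $p$-curvature conjecture. Even for concrete hypergeometric operators, verifying Christol's conjecture has required ad hoc identities rather than any uniform construction. A successful proof would therefore have either to develop a new $p$-adic cohomological machine producing the ambient rational function directly from the operator $L$ and its Frobenius structure, or to reformulate the statement Tannakianly inside the category of $G$-modules so as to bypass geometric realization entirely. Theorem~\ref{t:diag} of the present paper is a cautionary parallel in miniature: even when one restricts to the narrow subclass of cogrowth series of $\UT(m,\zz)$, no algorithmic route from the series to its rational presentation can exist, which suggests that any proof of Christol's conjecture will necessarily be of an essentially nonconstructive character.
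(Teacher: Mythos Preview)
The statement you were asked to prove is labeled a \emph{conjecture} in the paper, and the paper offers no proof of it; Christol's conjecture remains open, as you yourself note in your first sentence. There is therefore no paper proof to compare your proposal against. The paper merely states the conjecture as context, uses it to motivate why Kontsevich's question for nilpotent groups reduces (conditionally) to the diagonal question, and later in~$\S$\ref{ss:finrem-Christol} remarks on its status and on potential counterexamples.

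Your write-up is honest about this: you call the attempt ``candidly speculative'' and you correctly locate the obstruction at the geometric realization step --- producing, from an abstract $G$-operator, an algebraic family whose Picard--Fuchs operator it is. That is indeed the crux, and no one knows how to do it in general; your sketch is a fair summary of the folklore strategy but is not a proof, and you do not claim otherwise. One minor caution: your closing appeal to Theorem~\ref{t:diag} as evidence that any proof of Christol must be nonconstructive overstates what that theorem says. Theorem~\ref{t:diag} concerns the uncomputability of the map $\CS\mapsto(P,Q)$ for cogrowth series of $\UT(m,\zz)$, a class not known to consist of D-finite series at all; it does not bear directly on whether a constructive proof of Christol's conjecture could exist for the class of globally bounded D-finite series.
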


\smallskip

Note that \emph{Euler's partition function}
$$P(t) \. := \. 1 \. + \. \sum_{n=1}^{\infty} p(n) \ts t^n \, = \,
\prod_{i=1}^{\infty} \. \frac{1}{1-t^i} \ \in \ \text{\em D-algebraic},
$$
see~\cite{MC44}. See also an explicit algebraic differential equation in \cite[$\S$2.5]{Pak18}.  Since \ts
$p(n)=e^{O(\sqrt{n})}$, it follows that $P(t)\notin$ {\em D-finite}.
In particular, Christol's Conjecture does not extend to D-algebraic GFs.

\medskip

\subsection{Proofs of Theorems~\ref{t:abelian} and~\ref{t:diag}} \label{ss:series-app}
We start with the following two results.

\begin{theorem}[{\rm Kuksov \cite[$\S$5.1]{Kuk2}}]
\label{t:Kuksov}
Let $G$ be a finitely generated abelian group with a finite symmetric generating set~$\ts\mathcal{S}$.
Then the cogrowth series \ts $\Cog_\CS(t) \in $ Diagonal.
\end{theorem}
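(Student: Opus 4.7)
My plan is to reduce the cogrowth series to constant-term extractions of rational functions in several variables via Fourier analysis on the torsion part of \ts $G$, \ts and then invoke the classical correspondence between constant terms and diagonals.  By the structure theorem, write \ts $G \cong \zz^d \oplus T$ \ts with \ts $T$ \ts a finite abelian group, and decompose each generator \ts $s \in \CS$ \ts as \ts $s = (v_s, \tau_s)$ \ts with \ts $v_s \in \zz^d$, \ts $\tau_s \in T$.  The closed-walk condition factors as \ts $\sum_i v_{s_i} = \mathbf{0}$ \ts in \ts $\zz^d$ \ts and \ts $\sum_i \tau_{s_i} = 0$ \ts in \ts $T$.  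The first equality is detected by the \ts $\bx^{\mathbf{0}}$ \ts coefficient of a Laurent polynomial in \ts $d$ \ts variables, while the second is handled by the orthogonality relation \ts $\won\{\tau = 0\} = |T|^{-1}\sum_{\chi \in \widehat T} \chi(\tau)$.

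Combining these ingredients gives
\[
\cog_\CS(n) \, = \, \frac{1}{|T|} \sum_{\chi \in \widehat T} \bigl[\bx^{\mathbf{0}}\bigr] P_\chi(\bx)^n, \qquad P_\chi(\bx) \, := \, \sum_{s \in \CS} \chi(\tau_s) \ts \bx^{v_s},
\]
so each \ts $P_\chi$ \ts is a Laurent polynomial in \ts $d$ \ts variables with coefficients in a cyclotomic ring \ts $\zz[\zeta]$, \ts $\zeta^{|T|} = 1$.  Summing over \ts $n$ \ts yields
\[
\Cog_\CS(t) \, = \, \frac{1}{|T|} \sum_\chi \bigl[\bx^{\mathbf{0}}\bigr] \frac{1}{1 \. - \. t \ts P_\chi(\bx)}.
\]
At this point I would invoke the folklore lemma (going back to Furstenberg and made precise by Christol and by Denef--Lipshitz): for any Laurent polynomial \ts $R(\bx)$ \ts over a field \ts $\mathbb{K}$, \ts the series \ts $[\bx^{\mathbf{0}}](1 - t R(\bx))^{-1}$ \ts is the diagonal of an explicit rational function in a few more variables over \ts $\mathbb{K}$.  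The standard reduction clears denominators by multiplying by a monomial \ts $\bx^{N\mathbf{1}}$ \ts to turn \ts $P_\chi$ \ts into an ordinary polynomial, then trades the skewed extraction \ts $[\bx^{nN\mathbf{1}}]$ \ts for an honest diagonal by introducing auxiliary variables.

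Applying the lemma to each summand, and using that sums of diagonals remain diagonals (achieved by inserting dummy factors \ts $(1-y_j)^{-1}$ \ts to merge variable sets), I obtain \ts $\Cog_\CS(t) = \diag R$ \ts for a single rational function \ts $R$ \ts over \ts $\qq(\zeta)$.  Since \ts $\Cog_\CS(t) \in \zz[[t]]$ \ts is fixed by the Galois group \ts $\Gamma := \mathrm{Gal}(\qq(\zeta)/\qq)$, \ts averaging \ts $R$ \ts over \ts $\Gamma$ \ts produces a rational function with \ts $\qq$-coefficients whose diagonal is still \ts $\Cog_\CS(t)$; \ts clearing a common denominator puts everything over \ts $\zz$, \ts as required.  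I expect the main technical obstacle to be the constant-term-to-diagonal lemma, specifically the monomial rescaling and exponent straightening needed to convert a biased coefficient extraction into a genuine diagonal; the character decomposition and the Galois descent are routine bookkeeping by comparison.
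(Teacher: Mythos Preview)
The paper does not prove this theorem; it is quoted from Kuksov's thesis \cite{Kuk2} and used as a black box (see the proof of Theorem~\ref{t:abelian} and the remark in~\S\ref{ss:finrem-abelian}). So there is no proof in the paper to compare against. Your argument is correct: the character decomposition, the identification $\cog_\CS(n)=\tfrac{1}{|T|}\sum_\chi[\bx^{\mathbf 0}]P_\chi(\bx)^n$, the closure of diagonals under sums, and the Galois descent are all sound, and the constant-term-to-diagonal lemma you invoke is indeed standard (it follows, for instance, from the binomial-sum formalism of \cite{BLS17} mentioned in~\S\ref{ss:intro-hist}).

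One simplification: you can bypass the cyclotomic coefficients and the Galois averaging by using the regular representation of~$T$ in place of characters. The element $\sum_{s\in\CS} s\in\zz[G]$ acts on $\zz[\bx^{\pm 1}]\otimes\zz[T]$ as a $|T|\times|T|$ matrix $M$ with entries in $\zz[x_1^{\pm1},\ldots,x_d^{\pm1}]$, and then $\Cog_\CS(t)=[\bx^{\mathbf 0}]\ts\bigl[(I-tM)^{-1}\bigr]_{1,1}$, a single rational function already over~$\zz$. Feeding this into the constant-term-to-diagonal lemma gives the result in one stroke and yields the explicit, constructive presentation the paper relies on downstream.
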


For $G=\zz^d$, this result is folklore, see e.g.\ \cite[$\S$3.1.4]{Mis20}.
Note that Kuksov's formulation is different, but equivalent to ours.

\smallskip

\begin{thm}[{Adamczewski--Bell \cite[Thm.~9.1{\ts \small (i)}]{AB13}}]
\label{t:AB}
Let \ts $C(t)=\sum_{n\ge 0} c_n t^n \, \in $ Diagonal, let \ts $p$ \ts be a prime,
and let \ts $a\ge 1$, \ts $b \ge 0$ \ts be integers.  The following problem is \underline{decidable}\ts: \ts
\[
\exists  \ts n\in \nn \ : \
c_n \ts \equiv \ts b \ \mod p^a.
\]
\end{thm}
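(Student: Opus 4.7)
The plan is to reduce the existential question to a reachability problem in a finite automaton, by establishing that the sequence $\{c_n \bmod p^a\}_{n \ge 0}$ is $p$-automatic and can be effectively computed. So the proof has three stages: build an automaton, recognize the target set as the accepting language, and decide emptiness.

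For the automaton, I would follow the Denef--Lipshitz/Furstenberg line of argument in the form needed by Adamczewski--Bell. Write $C(t) = \diag P/Q$ with $P, Q \in \zz[x_1,\ldots,x_k]$ and, by clearing denominators and the standard reduction, view $c_n$ as the coefficient of $x_1^n \cdots x_k^n$ in an explicit power series $F \in \zz[[x_1,\ldots,x_k]]$ that is a rational function of $\bx$. For each digit vector $\bd \in \{0,\ldots,p-1\}^k$, define the Cartier-type operator
\[
\Lambda_{\bd} F \, := \, \sum_{\bn} \, \big[\bx^{p\ts\bn + \bd}\big] F \ \cdot \ \bx^{\bn}.
\]
Working in $(\zz/p^a\zz)[[x_1,\ldots,x_k]]$, the key lemma (the heart of the Adamczewski--Bell argument, extending Deligne's mod-$p$ version) is that the orbit of $F \bmod p^a$ under the monoid generated by all $\Lambda_\bd$ is finite, and effectively enumerable: one exhibits a finitely generated $\zz/p^a\zz$-submodule of rational functions stable under every $\Lambda_\bd$ that contains $F$, and bounds its generators by a function of $\deg P$, $\deg Q$, $k$, $p$, and $a$. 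The coefficient $[\bx^\ts{\bf 0}] G$ of any element $G$ in this orbit, taken along the diagonal subsequence, yields the next digit readout, so the finite orbit doubles as the state set of a deterministic $p$-automaton $M$ which, on input the base-$p$ expansion of $n$, outputs $c_n \bmod p^a$.

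Once $M$ is constructed, the set $S_b := \{n \in \nn : c_n \equiv b \bmod p^a\}$ is exactly the set of base-$p$ strings accepted by $M$ when the accept states are declared to be those labelled~$b$. Testing $S_b \ne \emp$ is the classical reachability problem for a finite directed graph and is decidable in time linear in the size of~$M$. The main obstacle, and the entire technical content, is the finiteness statement in the previous paragraph: proving an effective bound on the $\Lambda_\bd$-orbit of a rational function modulo $p^a$. The mod-$p$ case is Furstenberg/Deligne and uses that the Frobenius turns rational functions into rationals of bounded degree; lifting this to $\zz/p^a\zz$ requires the more delicate analysis of \cite{DL87}/\cite{AB13}, where one controls how the denominators of iterated Cartier images behave under Hensel-type arguments. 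I would invoke this lemma as the decisive black box, and assemble the automaton and decision procedure around it as above.
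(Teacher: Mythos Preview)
The paper does not prove this statement; Theorem~\ref{t:AB} is quoted as a black box from \cite[Thm.~9.1(i)]{AB13} and used without proof in the derivations of Theorems~\ref{t:abelian} and~\ref{t:diag}. So there is no ``paper's own proof'' to compare against.

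That said, your sketch is a faithful outline of how Adamczewski and Bell actually establish the result in \cite{AB13}: reduce modulo $p^a$, show the Cartier/section-operator orbit of the rational function is finite and effectively bounded (this is their main technical contribution, generalizing the mod-$p$ case of Furstenberg/Deligne and the work of Denef--Lipshitz \cite{DL87}), read off a $p$-automaton computing $c_n \bmod p^a$, and then decide emptiness of the fiber over $b$ by graph reachability. You correctly identify the effective finiteness of the $\Lambda_{\bd}$-orbit over $\zz/p^a\zz$ as the crux, and correctly flag that this is where the real work lies. One small caveat: for the decidability claim you also need the automaton to be \emph{effectively constructible} from the data $(P,Q,p,a)$, not merely to exist; your phrase ``effectively enumerable'' covers this, but it is worth stressing that the bound on the orbit size must be computable in terms of the input, which is exactly what \cite{AB13} provides.
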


\smallskip

Theorems~\ref{t:abelian} and~\ref{t:diag} now follows easily by a combination
of these results and the Main Theorem~\ref{t:maintheorem}.

\smallskip

\begin{proof}[Proof of Theorem~\ref{t:abelian}]
Note that the proof of Theorem~\ref{t:Kuksov} in \cite[$\S$5.1]{Kuk2}
is completely constructive, giving \ts $\Cog_\CS = \diag \ts P_1/Q_1$ \ts and
\ts $\Cog_\CT=\diag \ts P_2/Q_2$ \ts for some explicit \ts
$P_1,P_2,Q_1,Q_2\in \zz[x_1,\ldots,x_2]$.
Let \ts $C(t)=\sum_{n\ge 0} c_n t^n :=\diag \ts \bigl(P_1/Q_1 - P_2/Q_2\bigr)$.
Apply Theorem~\ref{t:AB} to $C(t)$ with all possible $1\le b < p^a$, to check
if there is a solution for $b\not \equiv 0 \mod p^a$.  If not,
then we have \ts $c_n \ts \equiv \ts 0 \mod p^a$ \ts for all $n\in \nn$, as desired.
\end{proof}

\smallskip

\begin{proof}[Proof of Theorem~\ref{t:diag}]
Let \ts $p=2$,  \ts $a=40$, and let \ts $G=\UT(m,\zz)$ \ts be as in Theorem~\ref{t:maintheorem}.
Suppose every cogrowth series \ts $\Cog_\CS(t)$ \ts is a diagonal of polynomials
which are computable (given~$\CS$).  Then the same holds for the difference:
\ts $\Cog_\CS(t)-\Cog_\CT(t)= \diag P/Q$,  for every two symmetric generating
sets~$\CS$ and $\CT$ of~$G$, and some computable multivariate polynomials \ts $P,Q$.
By Theorem~\ref{t:AB}, the congruence
\[
\forall  \ts n\in \nn \ : \
\cog_{\mathcal{S}}(n) \ts \equiv \ts \cog_{\mathcal{T}}(n) \ \mod 2^{40}
\]
is decidable, a contradiction with Theorem~\ref{t:maintheorem}.
\end{proof}

\smallskip

\subsection{Non-D-algebraic cogrowth series}\label{ss:series-DAlg}
Ideally, one would want to give a construction of a non-D-algebraic cogrowth
series of a unitriangular group.  As an application of our tools we give
such a construction assuming there is a Diophantine equation with certain
properties.

\smallskip

Denote \ts $\bx=(x_1,\ldots,x_k)$, and let \ts $f \in \zz[x_1, \dots, x_k]$.
Consider a Diophantine equation \ts $f(\bx)=0$.  Denote by
\ts $\CR(f):=\{\bx\in \zz^k \.:\. f(\bx)=0\}$ \ts be the \emph{set of roots}.

\smallskip

We say that $f$ is \defn{sparse} \ts if all roots \ts $\bx \in \CR(f)$ \ts
have distinct $\ell^1$ norm: \ts $|\bx| \ne |\by|$ \ts for all \ts $\bx,\by \in \CR(f)$.
In this case we can assume that the roots of $f$ are ordered according to the norm: \ts
$\CR(f) = \{\bbr_1, \bbr_2,\ldots\}$, where \ts $|\bbr_1|< |\bbr_2|< \ldots$ \ts
For a sparse~$f$, we use \ts $\rho_i:=|\bbr_i|$.

Finally, for \ts $z\in \zz$, let \ts $\bin(z)$ \ts denote the number of $1$'s
in the binary expansion of~$|z|$.

\smallskip

\begin{conj} \label{conj:sparse}
There exists $k\in \nn$ and a sparse \ts $f\in \zz[x_1,\ldots,x_k]$ which satisfies:
\begin{itemize}
\item[$(1)$] \ $\rho_i$ \ts is even for all \ts $i\ge 1$,
\item[$(2)$] \ $\rho_{i +1}/\rho_i \to \infty$ \ts as \ts $i \to \infty$,
\item[$(3)$] \ for every integers \ts $a, b\ge 1$,  there exists \ts $i\ge 1$, s.t.\ \ts $\rho_i/2 \equiv a \mod 2^b$,
\item[$(4)$] \ for every integers $a, b, h \ge 1$, there exists some \ts $N=N(a,b,h) \ge 1$, s.t.\ for all \ts $i > N$ \ts
we have:
\[
\min \big\{\. y \, : \, \bin(c\ts\rho_i -  y) \leq a   \.\big\} \, \geq \, b\ts\rho_{i-1} \quad \text{ for all } \ \ 1 \leq c \leq h\ts.
\]
\end{itemize}
\end{conj}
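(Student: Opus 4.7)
The plan is to combine a diagonalization construction with the Matiyasevich--Robinson--Davis--Putnam (MRDP) theorem. By MRDP, every recursively enumerable subset of $\nn$ is Diophantine, so the conjecture decomposes into two subproblems: (I) constructing a recursive set $S\subseteq\nn$ and a polynomial $P\in\zz[n]$ so that $R = \{P(n) : n \in S\} = \{\rho_1<\rho_2<\cdots\}$ satisfies the arithmetic conditions~(1)--(4); and (II) upgrading a Diophantine definition of $S$ to a \emph{sparse} polynomial $f \in \zz[x_1,\dots,x_k]$ whose sequence of root $\ell^1$-norms coincides with $\{\rho_i\}$ in bijection.

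For (I), I would enumerate the pairs $(a,b)$ arising in~(3) and the triples $(a,b,h)$ arising in~(4), and build $S$ stage by stage. At stage~$i$, pick $n_i \in \nn$ so that $P(n_i)/P(n_{i-1}) > i$ (for~(2)), so that the low bits of $P(n_i)/2$ realize the next unfulfilled 2-adic congruence (for~(3)), and so that every $c\ts P(n_i)$ with $1\le c\le i$ is ``binary-generic'' as required by~(4) for every $(a,b,h)$ enumerated up to stage~$i$. With $P$ even-valued of fixed degree, all these conditions translate into finitely many constraints on the bits of $n_i$ at each stage, and a counting argument on binary strings shows that the density of patterns violating any one constraint tends to zero as $n_i$ grows; so a valid $n_i$ always exists. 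The resulting set $S$ is recursive.

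For (II), let $g(n, \vec z) \in \zz[n, z_1, \dots, z_{k-1}]$ be the Diophantine representation of $S$ supplied by MRDP. To upgrade $g$ to a sparse polynomial $f$, I would combine three ingredients. First, replace each integer variable by a sum of four squares of new variables (Lagrange), forcing every coordinate of every solution to be nonnegative and killing sign-reflection duplicates. Second, enforce polynomial uniqueness of the witness $\vec z$ for each $n \in S$ via a Cantor-style pairing identity (selecting, say, the lexicographically least witness as the unique one). Third, introduce a balancing variable $w\ge 0$ constrained by $n + \sum_j z_j + w = P(n)$; because $P$ has high enough degree relative to the witness size, this has a unique nonnegative solution for $w$, and the $\ell^1$-norm of the unique root over $n\in S$ is then exactly $P(n) = \rho_i$.

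The main obstacle is the uniqueness step in~(II). Standard Diophantine representations produce many witnesses per value of $n$, and enforcing uniqueness through polynomial equations alone is notoriously delicate, typically requiring an explicit G\"odel-style polynomial pairing whose arithmetic must be carefully interleaved with the recursion defining~$S$. A secondary subtlety lies in the compatibility between the binary-genericity condition~(4) and the polynomial rescaling by $P$: because $P$ is fixed, the bits of $c\ts P(n_i)$ are determined polynomially by the bits of $n_i$, and one must verify that the counting argument in~(I) still produces enough good patterns after this polynomial shift. Reconciling uniqueness, bit-level control, and the polynomial encoding is where the technical bulk of any proof will lie.
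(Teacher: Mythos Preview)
The statement you are trying to prove is stated in the paper as a \emph{conjecture}, not a theorem; the paper does not offer a proof, and in fact uses it only as a hypothesis for Theorem~\ref{t:dalg}. So there is no ``paper's proof'' to compare against, and the relevant question is whether your outline actually closes the conjecture.

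It does not, and you have put your finger on exactly where it breaks. Your step~(II) requires, for each $n\in S$, a polynomial constraint that singles out a \emph{unique} witness $\vec z$ (so that the balancing variable $w$ makes the $\ell^1$-norm equal to $P(n)$). Producing a Diophantine representation with a unique solution over~$\nn$ for each represented value is precisely Matiyasevich's \emph{singlefold representation} problem, which is open. Your proposed fix, a ``Cantor-style pairing identity selecting the lexicographically least witness,'' does not work with polynomial equations alone: ``$\vec z$ is lexicographically least'' is a $\Pi_1$ condition (it quantifies universally over all other witnesses), and encoding it Diophantinely would itself solve the singlefold problem. So step~(II) is not a technicality but a genuine gap at the level of a well-known open question.

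There is a second, independent gap. You assume ``$P$ has high enough degree relative to the witness size,'' but MRDP witnesses are not polynomially bounded in~$n$; the standard encodings route through exponential Diophantine relations, and even for a polynomial-time decidable set~$S$ the known representations give no polynomial bound on $|\vec z|$ in terms of~$n$. Hence there is no fixed-degree polynomial $P$ for which $n+\sum_j z_j\le P(n)$ is guaranteed, and the balancing equation $n+\sum_j z_j+w=P(n)$ need not have a nonnegative solution. Without both uniqueness and a polynomial witness bound, the map $n\mapsto\rho_i$ you want does not come out of the construction.
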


\smallskip

\begin{theorem}\label{t:dalg}
Suppose Conjecture~\ref{conj:sparse} holds.  Then there exists an integer \ts $m\ge 1$ \ts
and a symmetric generating set \ts $\CS$ \ts of \. $\UT(m,\zee)$, s.t.\ the
cogrowth series \ts $\Cog_\CS(t)$ \ts is not D-algebraic.
\end{theorem}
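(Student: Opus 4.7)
The plan is to apply the explicit embedding of Diophantine equations into cogrowth modulo \ts $2^a$ \ts developed in the proof of the Main Theorem. Given a sparse polynomial \ts $f\in\zz[x_1,\ldots,x_k]$ \ts satisfying Conjecture~\ref{conj:sparse}, that machinery produces symmetric generating sets \ts $\CS$ \ts and \ts $\CT$ \ts of some \ts $\UT(m,\zz)$, together with an explicit description of the difference \ts $D(t):=\Cog_\CS(t)-\Cog_\CT(t)\bmod 2^a$, whose nonzero coefficients encode the root set \ts $\CR(f)$. Since the class of D-algebraic formal power series is a ring closed under differences, it suffices to show that \ts $D(t)$ \ts is not D-algebraic; one of \ts $\Cog_\CS$, \ts $\Cog_\CT$ \ts must then fail to be D-algebraic, and that generating set furnishes the desired example.

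From the structure of the embedding one reads off that \ts $D(t)\bmod 2^a$ \ts is supported on the positions \ts $n=2\rho_i$, and that the value at each such position is a specific multinomial-type expression in the coordinates of the corresponding root \ts $\bbr_i$. Condition $(1)$ places the support at even \ts $n$, compatible with the bipartite parity of closed walks in the Cayley graph; condition $(2)$ yields super-geometric gaps \ts $\rho_{i+1}/\rho_i\to\infty$; condition $(3)$ ensures that the residues \ts $d_{2\rho_i}\bmod 2^a$ \ts realize prescribed values infinitely often; and condition $(4)$ provides the $2$-adic separation between the multiples \ts $c\rho_i$ \ts and integers of small binary weight, which is needed to control cross-interactions of coefficients at different scales.

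Now suppose for contradiction that \ts $D(t)$ \ts is D-algebraic, satisfying \ts $Q\bigl(t,D,D',\ldots,D^{(k)}\bigr)=0$ \ts for some \ts $Q\in\zz[t,y_0,\ldots,y_k]$. Expanding this identity around a support point \ts $n=2\rho_i$ \ts for large \ts $i$ \ts forces \ts $d_{2\rho_i}\bmod 2^a$ \ts to be expressible as a bounded polynomial combination of coefficients at nearby positions \ts $n\pm j$, with \ts $|j|$ \ts bounded in terms of the order and the degrees appearing in \ts $Q$. By condition $(2)$, the only nonzero contributions of \ts $D\bmod 2^a$ \ts within this window come from positions of size at most \ts $2\rho_{i-1}$; condition $(4)$, applied through the $2$-adic valuation of the explicit multinomial formula for \ts $d_{2\rho_i}$, asserts exactly that no such polynomial combination can match the actual value up to the required precision; and condition $(3)$ guarantees infinitely many \ts $i$ \ts at which a genuine mismatch arises, yielding the contradiction.

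The main obstacle is carrying out this last $2$-adic bookkeeping in detail: one must translate the combinatorial formula for \ts $d_{2\rho_i}$ \ts coming from the cogrowth construction into a precise statement about binary expansions of \ts $\bbr_i$, and then verify that condition $(4)$ is exactly the separation hypothesis needed to preclude matching by a bounded polynomial differential relation. Conditions $(2)$, $(3)$, $(4)$ in Conjecture~\ref{conj:sparse} are calibrated precisely for this purpose; most of the work lies in showing that they are jointly sufficient to force non-D-algebraicity of \ts $D(t)$, and hence of at least one of the two cogrowth series.
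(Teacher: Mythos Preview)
Your outline has the right shape---form the difference of two cogrowth series and show it cannot be D-algebraic---but two of the steps do not go through as written.

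First, the claim that \ts $D(t)\bmod 2^a$ \ts is supported precisely at positions determined by the root norms \ts $\rho_i$ \ts is false for the generating sets produced by the Main~Theorem construction alone.  Lemma~\ref{l:modulos} gives \ts $\cog_\CT(n)-\cog_\CS(n)\equiv 2n(n-1)\ts c_{n-1}\ts u^9\bmod u^{10}$, where \ts $c_n$ \ts counts cogrowth words \ts $PV_1QV_2P^{-1}V_3Q^{-1}V_4$.  For a root \ts $\bx$ \ts the \emph{minimal} such word has length \ts $4|\bx|+4$, but there are many longer words with the same evaluations: one may insert cancelling pairs \ts $A_jA_j^{-1}$ \ts anywhere in any \ts $V_i$.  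Hence \ts $c_n$ \ts is nonzero, with uncontrolled parity, for all large \ts $n$ \ts of the right parity, and \ts $D(t)\bmod 2^a$ \ts is not sparse at all.  The paper's proof repairs this with a further modification absent from your sketch: each \ts $A_j$ \ts is replaced by a \emph{neutral} and a \emph{charged} version, and \ts $P,Q$ \ts are augmented by \ts $5\times 5$ \ts blocks so that a word is cogrowth only if each \ts $W_i$ \ts has net charge zero.  The number of valid charge assignments on a subword with \ts $v+x_i$ \ts copies of \ts $A_i$ \ts and \ts $v$ \ts copies of \ts $A_i^{-1}$ \ts is \ts $\binom{2v+x_i}{v}$, and Kummer's theorem controls exactly when this is odd (see~\eqref{kummer} and~\eqref{eq:closetopowers}).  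This is where condition~(4) of Conjecture~\ref{conj:sparse} actually enters; without the charged construction there is no mechanism connecting \ts $\bin(c\rho_i-y)$ \ts to the parity of any cogrowth count.

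Second, your description of what a D-algebraic relation imposes on coefficients is the description of a P-recursion, i.e.\ of D-finiteness.  Expanding \ts $Q(t,D,D',\ldots,D^{(k)})=0$ \ts at \ts $t^n$ \ts does \emph{not} give a relation among the finitely many coefficients \ts $d_{n\pm j}$ \ts for bounded \ts $j$; it gives a polynomial relation among \emph{sums of products} \ts $d_{i_1}\cdots d_{i_\ell}$ \ts with \ts $i_1+\cdots+i_\ell=n-c$, as in~\eqref{eq:adee}.  This is why condition~(4) of Lemma~\ref{l:dalglemma} concerns decompositions \ts $b_1+\cdots+b_D$ \ts near \ts $n_{i_1}+\cdots+n_{i_D}$, not shifts \ts $n\pm j$, and why the paper passes through that lemma rather than arguing directly.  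The argument in your third paragraph, as written, would at best establish non-D-finiteness.
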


\smallskip

We prove Theorem~\ref{t:dalg} in Section~\ref{s:proof-dalg}.  The proof is based
on the following result of independent interest.  It also explains the nature
of assumptions in the conjecture.

\smallskip

\begin{lemma}\label{l:dalglemma}
Let \ts $\{\lambda_n\} \in \nn^{\infty}$ \ts be an integer sequence
s.t.\ $\lambda_0 = 1$. Suppose there exists an increasing integer
sequence \ts $\{n_1 < n_2< \ldots\}$ \ts with the following properties:
\begin{itemize}
\item[$(1)$] \  $\lambda_{n_i}$ is odd for every $i\in \nn$,
\item[$(2)$] \   $n_{i + 1}/n_i \to \infty$ \. as \. $i \to \infty$,
\item[$(3)$] \ for every integers \ts $a, b\ge 1$,  there exists \ts $i\ge 1$, s.t.\ \ts $n_i \equiv a \mod 2^b$,
\item[$(4)$] \  for every $C, D\ge 1$, there exists \ts $N=N(C, D) > 0$, s.t.\ for every \ts $i_1, \dots, i_D > N$, if
\[
 n_{i_1} + \cdots + n_{i_D} - C \leq b_1 + \cdots + b_D \leq n_{i_1} + \cdots + n_{i_D}
\]
 for some nonnegative integers $b_1, \dots, b_D$, then either:
 \begin{itemize}
 \item \ $\lambda_{b_j}$ is even for at least one $j$.
 \item \ $\{b_1, \dots, b_D\}$ \ts and \ts $\{n_1, \dots, n_D\}$ \ts are equal up to rearrangement.
\end{itemize}
\end{itemize}
Then the sequence $\{\lambda_n\}$ is not D-algebraic.
\end{lemma}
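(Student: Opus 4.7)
The plan is to argue by contradiction. Suppose \ts $A(t)=\sum_n \lambda_n\ts t^n$ \ts is D-algebraic, so there exists a nonzero polynomial \ts $Q \in \zz[t, x_0, \ldots, x_k]$ \ts with \ts $Q(t, A, A', \ldots, A^{(k)})=0$ \ts in \ts $\zz[[t]]$. The first move is a mod-$2$ reduction preserving high-order derivative information: since \ts $A^{(j)}\equiv 0 \pmod 2$ \ts for every \ts $j\ge 2$ \ts (as \ts $j!$ \ts is even), a naive reduction is useless. Instead, I would rewrite each \ts $A^{(j)}=j!\cdot\partial^{[j]}A$ \ts via the divided-power derivatives \ts $\partial^{[j]}A := \sum_n\binom{n}{j}\lambda_n\ts t^{n-j}$, substitute, and clear the maximal common power of~$2$ from the coefficients. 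This produces a nonzero polynomial \ts $\bar Q\in\ff_2[t,y_0,\ldots,y_k]$ \ts such that, in \ts $\ff_2[[t]]$,
\[
\bar Q\bigl(t,A,\partial^{[1]}A,\ldots,\partial^{[k]}A\bigr) \, \equiv \, 0 \pmod 2.
\]

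Let \ts $D$ \ts be the total \ts $y$-degree of \ts $\bar Q$. After multiplying by a fixed power of \ts $t$, I may assume each monomial \ts $c_\alpha\ts t^{e_\alpha}\prod_j y_j^{a_{\alpha,j}}$ \ts of \ts $\bar Q$ \ts satisfies \ts $e_\alpha\ge\sum_j j\ts a_{\alpha,j}$, with offset at most \ts $C := \max_\alpha \bigl(e_\alpha-\sum_j j\ts a_{\alpha,j}\bigr)$. Let \ts $N_0=N(C,D)$ \ts be as in condition~(4); pick distinct indices \ts $i_1<\ldots<i_D$ \ts with \ts $i_1 > N_0$ \ts (to be specified at the end) and set \ts $N := n_{i_1}+\ldots+n_{i_D}$. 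Expanding \ts $[t^N]\bar Q$ \ts modulo~$2$ monomial by monomial, each contribution is a sum of products \ts $\prod_\ell \binom{b_\ell}{j_\ell} \lambda_{b_\ell}$ \ts over ordered tuples \ts $(b_1,\ldots,b_{D_\alpha})$, with slot-profile \ts $(j_1,\ldots,j_{D_\alpha})$ \ts determined by the monomial~\ts $\alpha$. For a nonvanishing product mod~$2$, all \ts $\lambda_{b_\ell}$ \ts must be odd, and condition~(4) then forces \ts $\{b_\ell\}=\{n_{i_j}\}$ \ts as multisets. Monomials of degree \ts $D_\alpha<D$ \ts are killed by padding the tuple with \ts $D-D_\alpha$ \ts copies of the index~$0$ (valid since \ts $\lambda_0=1$ \ts and \ts $\binom{0}{0}=1$), which would force \ts $0\in\{n_{i_j}\}$, impossible; top-degree monomials with \ts $e_\alpha > \sum_j j\ts a_{\alpha,j}$ \ts give a tuple-sum strictly less than~\ts $N$, again ruled out by condition~(4). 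So only top-degree monomials with \ts $e_\alpha=\sum_j j\ts a_{\alpha,j}$ \ts contribute, and their tuples are permutations of \ts $(n_{i_1},\ldots,n_{i_D})$ \ts placed into the \ts $D$ \ts slots.

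The surviving expression mod~$2$ is
\[
[t^N]\bar Q \ \equiv \ \sum_{\alpha \in \cA} c_\alpha \sum_{\sigma\in S_D} \prod_{\ell=1}^D \binom{n_{i_{\sigma(\ell)}}}{j_\ell^{(\alpha)}} \pmod 2,
\]
where \ts $\cA$ \ts indexes the surviving top-degree monomials. By Lucas's theorem each \ts $\binom{n_{i_?}}{j}\bmod 2$ \ts is determined by specific binary digits of \ts $n_{i_?}$, and condition~(3) lets us freely prescribe each \ts $n_{i_j}\bmod 2^b$ \ts for any~\ts $b$. Permutation orbits of size \ts $\prod_j a_{\alpha,j}!$ \ts within slots sharing the same derivative order~\ts $j$ \ts kill any monomial having some \ts $a_{\alpha,j}\ge 2$, reducing us to the case where \ts $\bar Q$ \ts has at least one monomial with all \ts $a_{\alpha,j}\in\{0,1\}$. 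The main obstacle is to show that this \ts $\ff_2$-valued function of the bit-patterns of \ts $(n_{i_1},\ldots,n_{i_D})$ \ts cannot vanish on every admissible tuple unless every \ts $c_\alpha=0$: this is a combinatorial linear-independence step, exploiting that distinct monomial-supports produce distinct Lucas-products in the factors \ts $\binom{\cdot}{j_\ell^{(\alpha)}}$, and it is here that conditions~(3) (flexibility of binary residues) and~(2) (separation, making condition~(4) applicable) interact most delicately to yield an odd coefficient and the required contradiction.
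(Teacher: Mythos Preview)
Your reduction is set up cleanly, but the proof stops exactly where the content is.  There are two concrete gaps.

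First, after you observe that monomials of degree $<D$, monomials with positive shift $e_\alpha-\sum_j j\,a_{\alpha,j}>0$, and monomials with some $a_{\alpha,j}\ge 2$ contribute nothing to $[t^N]$ for your chosen $N$, you conclude that you are ``reduced to the case where $\bar Q$ has at least one monomial with all $a_{\alpha,j}\in\{0,1\}$.''  That is not a reduction: you have shown the other monomials are invisible at this particular~$N$, not that a surviving monomial exists.  If, say, $\bar Q=y_1^2+t\,y_0$, then no monomial is simultaneously top-degree, zero-shift, and multiplicity-free; your computation then yields $[t^N]\equiv 0$ tautologically, and there is no contradiction.

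Second, and you flag this yourself, the ``combinatorial linear-independence step'' is the whole argument and is left undone.  With several surviving monomials indexed by distinct $D$-subsets $S_\alpha\subseteq\{0,\ldots,k\}$, you must exhibit a tuple $(n_{i_1},\ldots,n_{i_D})$ for which
\[
\sum_{\alpha}c_\alpha\sum_{\sigma\in S_D}\prod_{\ell=1}^{D}\binom{n_{i_{\sigma(\ell)}}}{j^{(\alpha)}_\ell}\ \equiv\ 1\pmod 2.
\]
Nothing in the proposal produces such a tuple, and Lucas by itself does not separate these permanents.

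The paper sidesteps both problems by \emph{not} reducing modulo~$2$ first.  It writes the $n$-th coefficient identity of the ADE as $\sum_\nu \Gamma_{\nu,n}\prod_i\lambda_{\nu_i}=0$, where $\Gamma_{\nu,n}$ is an integer-valued polynomial in the parts of~$\nu$.  One then picks $(\mu,n)$ minimizing $V:=v_2(\Gamma_{\mu,n})$, and uses condition~(3) to choose $n_{\alpha_i}\equiv\mu_i\pmod{2^{V+1}}$ with all $\alpha_i>N(C,D)$.  Because polynomials preserve congruences, the new partition $\nu=(n_{\alpha_1},\ldots,n_{\alpha_\ell})$ again has $v_2(\Gamma_{\nu,\cdot})=V$, and by condition~(1) its $\lambda$-product is odd.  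Condition~(4) forces every competing term at that~$n$ either to carry an even $\lambda$-factor or to have strictly larger $v_2(\Gamma)$, so exactly one term has $2$-valuation~$V$ and the sum cannot be zero.  Picking the minimal $2$-valuation up front is what guarantees a surviving term automatically; there is no need for Lucas, divided powers, or any independence argument.
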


\smallskip

For example, the sequence \ts $\{n_i = i! + i\}$ \ts satisfies properties~$(2)$ and~$(3)$ above.
Therefore, every integer sequence \ts $\{\lambda_n\}$,  where all \ts $\lambda_n$ \ts are odd
if and only if \ts $n = i! + i$ \ts for some~$i$, is not D-algebraic.

More generally, every integer sequence \ts $\{\lambda_n\}$, where $\lambda_n$ is odd whenever \ts
$n = i! + i$, and even when $n$ is not between $i! + i$ and $i! + 2i$ for some~$i$, is also not
D-algebraic. This is because we can take \ts $n_i := i! + i$ \ts  and property~$(4)$ will still hold.

\smallskip

\begin{remark}{\rm
If the sequence \ts $\{n_1, n_2, \dots\}$ \ts covers every index where \ts $a_n$ \ts is odd,
then condition $(4)$ follows from condition~$(3)$. This is because we could let $N$ be
large enough such that \ts $n_i > Dn_{i - 1}$ \t for all $i > N$. This case was
previously considered by Garrabrant and the first author.\footnote{Scott Garrabrant
and Igor Pak, unpublished notes (2015). }
}\end{remark}

\medskip

\section{Proof of Theorem \ref{t:maintheorem}}\label{s:proof-main}

The key idea in this proof will be to encode the existence of roots of an arbitrary Diophantine equation $f$ into statements about cogrowth in $\UT(m,\zz)$.
We proceed as follows. In Lemma~\ref{mainlemma} we show that words of a particular structure can compute the value of $f$ at integers. Then, in Lemmas \ref{l:wordsaresame} and \ref{subword} we extend our matrices so that this computation is true for a broader class of words.

Next, Lemmas \ref{l:modulos} and \ref{l:separation} allows us to turn the question of Theorem~\ref{t:maintheorem} into a statement about the existence of integer roots of an arbitrary Diophantine equation.  An explicit solution of Hilbert's 10th problem completes the proof.

\smallskip

\subsection{Polynomials via matrix products} \label{ss:proof-main-first}
We start with the following key lemma.

\smallskip

\begin{lemma}\label{mainlemma}
Let \ts $f \in \zee[x_1, \dots, x_k]$ \ts and let \ts $\Deg:= \deg f$. Then there exists
matrices \. $P, Q, A_1, \dots, A_k \in \UT(m,\zee)$ \. for some \. $m \leq (\Deg +  1)\binom{\Deg + k}{k} + 2$,  such that
\[
PAQA^{-1}P^{-1}AQ^{-1}A^{-1} \, = \, I_m \. + \. f(x_1, \dots, x_k) \ts E_{1m}
\]
for all
\[
A = A_1^{x_1}A_2^{x_2} \cdots A_k^{x_k} \quad \ \text{and} \ \quad (x_1, \dots, x_k) \in \nn^k.
\]
\end{lemma}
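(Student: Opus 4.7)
Since the expression $PAQA^{-1}P^{-1}AQ^{-1}A^{-1}$ is the group commutator $[P, AQA^{-1}]$, the lemma reduces to constructing unitriangular matrices $P, Q, A_1, \dots, A_k$ so that this one commutator carries $f(x_1, \dots, x_k)$ in its $(1, m)$-entry for every choice of $(x_1, \dots, x_k) \in \nn^k$ and $A = A_1^{x_1} \cdots A_k^{x_k}$. The plan is to design an $m \times m$ architecture with a source vertex $1$, a sink vertex $m$, and $M := \binom{\Deg + k}{k}$ disjoint parallel chains in between, one per multi-index $\mathbf{d} = (d_1, \dots, d_k)$ with $|\mathbf{d}| \le \Deg$. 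Writing $s_{\mathbf{d}}$ and $t_{\mathbf{d}}$ for the first and last index of the chain for $\mathbf{d}$, with $t_{\mathbf{d}} - s_{\mathbf{d}} = |\mathbf{d}|$, the total size becomes $m = 2 + \sum_{\mathbf{d}}(|\mathbf{d}| + 1) \le (\Deg + 1)M + 2$, matching the claimed bound.

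First, expand $f$ in the binomial basis as $f(x) = \sum_{\mathbf{d}} c_{\mathbf{d}} \binom{x_1}{d_1} \cdots \binom{x_k}{d_k}$. Inside the chain for $\mathbf{d}$, allocate the first $d_1$ consecutive edges $s_{\mathbf{d}} \to s_{\mathbf{d}}+1 \to \cdots$ to $A_1$, the next $d_2$ edges to $A_2$, and so on through $A_k$; outside that chain every $A_i$ acts trivially, so $A$ is block-diagonal with the $M$ chains as blocks (and identity on the two extremal vertices). A short nilpotent computation then gives $[A_1^{x_1} \cdots A_k^{x_k}]_{s_{\mathbf{d}}, t_{\mathbf{d}}} = \binom{x_1}{d_1} \cdots \binom{x_k}{d_k}$: the intermediate indices in the matrix product sum are forced to land at the endpoints of the consecutive $A_i$-segments, and on each segment $A_i^{x_i}$ contributes precisely $\binom{x_i}{d_i}$ from the binomial expansion of $(I + N)^{x_i}$. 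Finally set
\[
P \,:=\, I_m \,+\, \sum_{\mathbf{d}} E_{1, s_{\mathbf{d}}}, \qquad Q \,:=\, I_m \,+\, \sum_{\mathbf{d}} c_{\mathbf{d}} \, E_{t_{\mathbf{d}}, m},
\]
so that $P$ feeds vertex $1$ into the start of every chain and $Q$ drains every chain's endpoint into vertex $m$, weighted by the coefficient of the corresponding monomial of $f$.

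To evaluate the commutator, let $X := P - I$ and $Y := AQA^{-1} - I$. Because row $m$ of the unitriangular matrix $A^{-1}$ equals $e_m^\top$, one checks that $E_{t_{\mathbf{d}}, m} A^{-1} = E_{t_{\mathbf{d}}, m}$, and hence $Y = \sum_{\mathbf{d}} c_{\mathbf{d}} A E_{t_{\mathbf{d}}, m}$ lives entirely in column $m$; combined with the fact that $X$ lives in row $1$, this yields $X^2 = Y^2 = YX = 0$. Expanding $(I+X)(I+Y)(I-X)(I-Y)$ under these identities collapses the result to $I + XY$. The block-diagonal structure of $A$ forces $[A]_{s_{\mathbf{d}}, t_{\mathbf{d}'}} = 0$ whenever $\mathbf{d} \neq \mathbf{d}'$, so only the ``diagonal'' $\mathbf{d} = \mathbf{d}'$ contributions survive:
\[
[XY]_{1, m} \,=\, \sum_{\mathbf{d}} [X]_{1, s_{\mathbf{d}}} \, [Y]_{s_{\mathbf{d}}, m} \,=\, \sum_{\mathbf{d}} c_{\mathbf{d}} \, [A]_{s_{\mathbf{d}}, t_{\mathbf{d}}} \,=\, f(x_1, \dots, x_k).
\]
Since $XY$ is supported in row $1 \cap$ column $m$, this is its only nonzero entry, giving $[P, AQA^{-1}] = I_m + f(x_1, \dots, x_k)\,E_{1, m}$ as required.

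The main obstacle, and the only place that deserves real care, is verifying the two geometric claims about $A$: that its block-diagonal support kills every cross-chain entry $[A]_{s_{\mathbf{d}}, t_{\mathbf{d}'}}$ with $\mathbf{d} \neq \mathbf{d}'$, and that within a single chain the interleaving $A_1^{x_1} \cdots A_k^{x_k}$ produces precisely $\prod_i \binom{x_i}{d_i}$ at the corner $(s_{\mathbf{d}}, t_{\mathbf{d}})$. Both reduce to the observation that the $A_i$-segments inside each chain are contiguous, disjoint, and laid out in the fixed order $A_1, A_2, \dots, A_k$, so any term in the matrix product reaching $(s_{\mathbf{d}}, t_{\mathbf{d}})$ must cross each segment in turn and is forced to stop at the segment boundaries. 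Everything else---the vanishing of $X^2$, $Y^2$, $YX$, and the reduction of the full commutator expansion to $I + XY$---is routine bookkeeping in the nilpotent algebra of strictly upper-triangular matrices.
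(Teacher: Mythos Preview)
Your proof is correct and uses the same construction as the paper: the same block-diagonal $A_i$ built from Jordan blocks indexed by multi-indices $\mathbf d$ with $|\mathbf d|\le \Deg$, the same $P = I + \sum_{\mathbf d} E_{1,s_{\mathbf d}}$ and $Q = I + \sum_{\mathbf d} c_{\mathbf d} E_{t_{\mathbf d},m}$, and the same computation $[A]_{s_{\mathbf d},t_{\mathbf d}} = \binom{\bx}{\mathbf d}$ via the forced intermediate indices.

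Where you diverge from the paper is in the final verification that the commutator equals $I_m + f(\bx)E_{1,m}$. The paper proceeds by a direct case analysis of the entries of $M = PAQA^{-1}$, then of $MP^{-1}$ and $P^{-1}M$, and finally combines these to evaluate $MP^{-1}(P^{-1}M)^{-1}$. You instead observe that with $X := P-I$ supported in row~$1$ and $Y := AQA^{-1}-I$ supported in column~$m$, one has $X^2 = Y^2 = YX = 0$, so $(I+X)(I+Y)(I-X)(I-Y) = I + XY$ collapses immediately. This is a cleaner and more transparent route to the same conclusion; it replaces the paper's entry-by-entry bookkeeping with a one-line nilpotent-algebra identity, and makes it manifest why only the $(1,m)$-entry survives. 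One small wording caution: your $A_i$-segments within a chain are edge-disjoint but share endpoint vertices (the Jordan blocks overlap in exactly one position), so ``disjoint'' should be read in that sense; your argument uses exactly this and is unaffected.
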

\begin{proof}
Denote \ts $\bx = (x_1, \dots, x_k)$ and recall the multi-index notation~\eqref{multiindexbimonal}.
Write \ts  $f(\bx)$ \ts in the binomial basis \ts $\{\binom{\bx}{\bd}: \bd \in \enn^k\}$ \ts as follows:
\begin{equation} \label{basis}
 f(\bx) \, = \, \sum_{|\bd| \leq \Deg} \. b_\bd \ts \tbinom{\bx}{\bd} \quad \ \
 \text{for some \ \ \, $b_\bd \in \zz$, \ $\bd \in \nn^k$.}
\end{equation}

Let $p,q\ge 1$.
Denote by \ts $J_q$ \ts the $q\times q$ Jordan block with $1$'s on and above the diagonal. We have:
{\small
\begin{equation} \label{jordanpower}
J_q \, = \, \begin{bmatrix}
1 & 1 & 0 & \cdots & 0 & 0 \\
0 & 1 & 1 & \cdots & 0 & 0 \\
0 & 0 & 1 & \cdots & 0 & 0 \\
\vdots & \vdots & \vdots &\ddots & \vdots & \vdots \\
0 & 0 & 0 & \cdots & 1 & 1 \\
0 & 0 & 0 & \cdots & 0 & 1 \\
\end{bmatrix} \qquad \text{and} \qquad
\big(J_q\big)^p \, = \, \begin{bmatrix}
1 & \binom{p}{1} & \binom{p}{2} & \cdots & \binom{p}{q - 2} & \binom{p}{q - 1} \\
0 & 1 & \binom{p}{1} & \cdots & \binom{p}{q - 3} & \binom{p}{q - 2} \\
0 & 0 & 1 & \cdots & \binom{p}{q - 3} & \binom{p}{q - 4} \\
\vdots & \vdots & \vdots &\ddots & \vdots & \vdots \\
0 & 0 & 0 & \cdots & 1 & \binom{p}{1} \\
0 & 0 & 0 & \cdots & 0 & 1 \\
\end{bmatrix}.
\end{equation}
}

Now, for each \ts $\bd = (d_1, \dots, d_k)$ in the sum in~\eqref{basis},
define matrices \. $B_{\bd, i} \in \UT(|\bd| + 1,\zee)$ \.
as follows:
\begin{equation}\label{blocksdefined}
\left\{
\aligned
B_{\bd, 1} \, &: = \ J_{d_1 + 1} \oplus I_{d_2 \ts + \. \ldots \. + \ts d_k}   \\
B_{\bd, 2} \, &: = \  I_{d_1} \oplus J_{d_2 + 1} \oplus I_{d_3 \ts + \. \ldots \. + \ts d_k}  \\
&\ \vdots\\
B_{\bd, k} \, &: = \  I_{d_1 \ts + \. \ldots \. + \ts d_{k - 1}} \oplus J_{d_k + 1}
\endaligned\right.
\end{equation}

For example, if $\bd = (2, 3, 0, 1)$ then
{\small
\[
B_{\bd, 1} = \begin{bmatrix}
\red{1} & \red{1} & \red{0} & 0 & 0 & 0 & 0\\
\red{0} & \red{1} & \red{1} & 0 & 0 & 0 & 0\\
\red{0} & \red{0} & \red{1} & 0 & 0 & 0 & 0\\
0 & 0 & 0 & 1 & 0 & 0 & 0 \\
0 & 0 & 0 & 0 & 1 & 0 & 0\\
0 & 0 & 0 & 0 & 0 & 1 & 0\\
0 & 0 & 0 & 0 & 0 & 0 & 1\\
\end{bmatrix} \qquad B_{\bd, 2} = \begin{bmatrix}
1 & 0 & 0 & 0 & 0 & 0 & 0\\
0 & 1 & 0 & 0 & 0 & 0 & 0\\
0 & 0 & \red{1} & \red{1} & \red{0} & \red{0} & 0\\
0 & 0 & \red{0} & \red{1} & \red{1} & \red{0} & 0\\
0 & 0 & \red{0} & \red{0} & \red{1} & \red{1} & 0\\
0 & 0 & \red{0} & \red{0} & \red{0} & \red{1} & 0\\
0 & 0 & 0 & 0 & 0 & 0 & 1\\
\end{bmatrix}
\]
\[
B_{\bd, 3} = \begin{bmatrix}
1 & 0 & 0 & 0 & 0 & 0 & 0\\
0 & 1 & 0 & 0 & 0 & 0 & 0\\
0 & 0 & 1 & 0 & 0 & 0 & 0\\
0 & 0 & 0 & 1 & 0 & 0 & 0 \\
0 & 0 & 0 & 0 & 1 & 0 & 0\\
0 & 0 & 0 & 0 & 0 & \red{1} & 0\\
0 & 0 & 0 & 0 & 0 & 0 & 1\\
\end{bmatrix} \qquad B_{\bd, 4} =\begin{bmatrix}
1 & 0 & 0 & 0 & 0 & 0 & 0\\
0 & 1 & 0 & 0 & 0 & 0 & 0\\
0 & 0 & 1 & 0 & 0 & 0 & 0\\
0 & 0 & 0 & 1 & 0 & 0 & 0 \\
0 & 0 & 0 & 0 & 1 & 0 & 0\\
0 & 0 & 0 & 0 & 0 & \red{1} & \red{1}\\
0 & 0 & 0 & 0 & 0 & \red{0} & \red{1}\\
\end{bmatrix}
\]
}

Note that each of the $B_{\bd, i}$ contains one nontrivial Jordan block, highlighted in red above.  In the case where $d_i = 0$,
the Jordan block has size one.  The block is located between indices \ts $(d_1+ \ldots + d_{i - 1} + 1)$ \ts and \ts $(d_1+ \ldots  + d_{i} +1)$.  That means that the nontrivial block overlaps the nontrivial blocks of $B_{\bd, i - 1}$ and $B_{\bd, i + 1}$ in exactly one place.

Let $B_\bd = B_{\bd, 1}^{x_1}\cdots B_{\bd, k}^{x_k}$. Then the top-right entry of $B$ is given by
\begin{equation} \label{topright}
\big[B_\bd\big]_{1, |\bd| + 1} = \sum_{(j_1, \ts \ldots \ts , \ts  j_{k + 1}) \ : \ j_1=1, \. j_{k + 1} = |\bd| + 1} \. \big[B_{\bd, 1}^{x_1}\big]_{j_1, j_2} \. \big[B_{\bd, 2}^{x_2}\big]_{j_2, j_3} \. \cdots \. \big[B_{\bd, k}^{x_k}\big]_{j_k, j_{k + 1}}\,.
\end{equation}

We investigate which of the terms in the sum \eqref{topright} survive.
Since all the \ts $B_{\bd, i}$ \ts  are upper triangular we can only have a nonzero term if \.
$j_1 \leq j_2 \leq \cdots \leq j_{k + 1}$. By the block structure of the \ts $B_{\bd, i}$,
the only way to have  a nonzero term where \ts $j_{i} < j_{i + 1}$ \ts is if \ts $j_i$ \ts and \ts $j_{i + 1}$ \ts satisfy
\[
d_1 \ts + \ts \ldots \ts + \ts  d_{i - 1} + 1 \, \leq \, j_i \, < \, j_{i + 1} \, \leq \, d_1 \ts + \ts \ldots \ts + \ts  d_{i} + 1\ts.
\]

Therefore, there is only one nonzero term in the sum \eqref{topright},
given by \. $j_i = d_1 + \ldots + d_{i - 1} + 1$, for all~$i$.
This term is the product of the top-right entries of all the nontrivial
Jordan blocks in \ts $B_{\bd, 1}$ \ts to \ts $B_{\bd, k}$. By~\eqref{jordanpower}, this gives
\begin{equation}
\label{cornerbinom}
[B_\bd]_{1, |\bd| + 1} \, = \, \big[J_{d_1 + 1}^{x_1}\big]_{1, d_1 + 1} \. \cdots \. \big[J_{d_k + 1}^{x_k}\big]_{1, d_k + 1}  \, = \, \tbinom{x_1}{d_1 + 1 - 1} \. \cdots \. \tbinom{x_k}{d_k + 1 - 1}  \, = \, \tbinom{\bx}{\bd}.
\end{equation}

Now we need to arrange these parts to create $f$. For each $i$, define
\[
A_i \, := \, I_1 \. \oplus \. \Biggl[ \, \bigoplus_{|\bd|\le \Deg} B_{\bd, i} \. \Biggr] \. \oplus \. I_1\..
\]
Let $m$ be the size of \ts $A_i$\ts. For each \ts $|\bd|\le \Deg$,
let \ts $(\al_\bd, \be_\bd)$ \ts be the coordinates of the top-right entry
of the block in \ts $A_i$ \ts coming from \ts $B_{\bd, i}$. Then we can define
$$
P \. := \. I_m \. + \. \sum_{|\bd|\le \Deg} \. E_{1, \al_\bd} \quad \text{and} \quad
Q \. := \. I_m \. + \. \sum_{|\bd|\le \Deg} \. b_\bd \ts E_{\be_\bd, m}\.,
$$
where the \ts $b_\bd$ \ts are the coefficients defined in~\eqref{basis}. The top-right corner of $PAQ$ is
\begin{align*}
[PAQ]_{1, m} \, = \, \sum_{1 \leq j_1, j_2 \leq m} \. [P]_{1j_1} \. [A]_{j_1j_2} \. [Q]_{j_2m}
\, = \,  \sum_{\bd_1, \bd_2} \. [A]_{\al_{\bd_1} \be_{\bd_2}} \. b_{\bd_2}\..
\end{align*}

But since the $A_i$'s were defined as block matrices, the only way for \ts
$[A]_{\al_{\bd_1}, \be_{\bd_2}}$ \ts to be nonzero is if \ts $\bd_1 = \bd_2$. Thus,
using \eqref{cornerbinom} this becomes
\begin{equation} \label{eq:cornerisright}
[PAQ]_{1, m} \ = \ \sum_{\bd} \, [A]_{\al_\bd, \be_\bd}\.  b_\bd  \ = \
\sum_\bd \, [B_\bd]_{1, |\bd| + 1} \. b_\bd  \ = \
\sum_\bd \, b_\bd \. \binom{\bx}{\bd}   \ = \ f(\bx)\ts.
\end{equation}

Now that we have a \ts $f(\bx)$ \ts in the top-right corner,
we need to make all the entries between this corner and the diagonal zero.
Let \ts $M = PAQA^{-1}$. Then we investigate its entries \ts $[M]_{ij}$.
Recall that
$$
[M]_{ij} \ = \ \sum_{i \leq m_1 \leq m_2 \leq m_3 \leq j} \.
[P]_{i,m_1} \. [A]_{m_1,m_2} \. [Q]_{m_2,m_3} \. [A^{-1}]_{m_3,j} \label{matrixmult}
$$
%
and that the only above-diagonal nonzero entries of \. $P$ \ts are on the top row,
of \ts $Q$  \ts are in the right column, \ts and of \ts $A$ \ts
 are in neither the top row or right column.

 We  have the following cases:
\begin{itemize}
\item[$\circ$] \. If \. $i = j$, then \. $[M]_{i,j} = 1$ \. because \ts $M \in \UT(m,\zee)$.
\item[$\circ$] \. If \. $i > j$, then \. $[M]_{i,j} = 0$, analogously.
\item[$\circ$] \.  If \. $1 < i < j < m$, then we are above the diagonal of but not along the top or right edge of the matrix. Here the only terms in \eqref{matrixmult}, such that \. $[P]_{i,m_1} \neq 0$ \. will be those where \ts $m_1 = i$. Likewise we must have \ts $m_2 = m_3$, since \ts $m_3 < m$. Thus, we can ignore $P$ and $Q$ in the product, and conclude \. $[M]_{ij} = [AA^{-1}]_{ij} = 0$.
\item[$\circ$] \.  If \. $1 = i < j < m$, then we are on the top row of the matrix but not in the corner. Again we can ignore $Q$  because \ts $m_3 < m$. So \. $[M]_{i, j} = [PAA^{-1}]_{ij} = [P]_{ij}$.
\item[$\circ$] \.   If \. $1 = i < j = m$, then we are in the top-right corner of the matrix. Here $A^{-1}$ cannot contribute to the sum, since \ts $[A^{-1}]_{m_3, m}$ \ts is nonzero only when \ts $m_3 = m$. Thus, \. $[M]_{1,m} = [PAQ]_{1,m} = f(\bx)$ \. by~\eqref{eq:cornerisright}.
\end{itemize}

To summarize, $M$ is of the form
{\small
\begin{equation} \label{eq:emm}
M = \begin{bmatrix}
1 & [P]_{1, 2} & [P]_{1, 3} & [P]_{1, 4} & \cdots & [P]_{1, m - 1} & f(\bx) \\
0 & 1 & 0 & 0 & \cdots & 0 & \xii_1(\bx) \\
0 & 0 & 1 & 0 & \cdots & 0 & \xii_2(\bx) \\
0 & 0 & 0 & 1 & \cdots & 0 & \xii_3(\bx) \\
\vdots &\vdots & \vdots & \vdots  & \ddots & \vdots & \vdots \\
0 & 0 & 0 & 0& \cdots & 0 & 1
\end{bmatrix}
\end{equation}
}
where the \ts $\xii_i(\bx)$ \ts denote some polynomials.

\smallskip

Note that $P$ is nonzero only in the first row and zero in the top-right corner.
Thus, the same holds for $P^{-1}$. Therefore, we can right-multiply \eqref{eq:emm}
by $P^{-1}$ to get
{\small
\begin{equation}\label{mpinv}
MP^{-1} \, = \, \begin{bmatrix}
1 & 0 & 0 & 0& \cdots  & f(\bx) \\
0 & 1 & 0 & 0 & \cdots & \xii_1(\bx) \\
0 & 0 & 1 & 0 & \cdots & \xii_2(\bx) \\
0 & 0 & 0 & 1 & \cdots & \xii_3(\bx) \\
\vdots&\vdots & \vdots & \vdots  & \ddots  & \vdots \\
0 & 0 & 0 & 0& \cdots  & 1
\end{bmatrix}.
\end{equation}
}

Similarly, \ts $P^{-1}M$ \ts must be equal to $M$ except possibly in the first row. But \ts
$P^{-1}M = AQA^{-1}$ \ts is the product of three matrices whose first rows are trivial.
Thus, \ts $P^{-1}M$ \ts must also be trivial in the first row. We conclude:
{\small
\begin{equation} \label{pinvm}
P^{-1}M \, = \, \begin{bmatrix}
1 & 0 & 0 & 0& \cdots & 0 \\
0 & 1 & 0 & 0 & \cdots & \xii_1(\bx) \\
0 & 0 & 1 & 0 & \cdots & \xii_2(\bx) \\
0 & 0 & 0 & 1 & \cdots & \xii_3(\bx) \\
\vdots&\vdots & \vdots & \vdots  & \ddots  & \vdots \\
0 & 0 & 0 & 0& \cdots  & 1
\end{bmatrix}.
\end{equation}
}
Combining \eqref{mpinv} and \eqref{pinvm}, we get
\begin{align*}
& PAQA^{-1}P^{-1}AQ^{-1}A^{-1} \ = \ \left(PAQA^{-1}P^{-1} \right) \left(AQ^{-1}A^{-1} \right)^{-1} \\
& \hskip1.6cm = \ MP^{-1} \left(P^{-1}M \right)^{-1} \ = \ I_m + f(\bx),
\end{align*}
as desired.

We now consider the size of $m$. There are exactly \. $\binom{\Deg + k}{k}$ \.
possible multi-indices \ts $\bd$ \ts with \. $|\bd| \leq \Deg$.
Each of these contributes at most \ts $(\Deg + 1)$ \ts to  the size of~$A_i$\ts,
and we get an additional $1$ from each~$I_1$.
This gives \. $m \leq (\Deg +  1)\binom{\Deg + k}{k} + 2$.
\end{proof}

\begin{corollary}\label{rootsarecogrowth}
A word of the form
\[
P W_1 Q W_2P^{-1} W_3 Q^{-1} W_4 \qquad \text{where} \qquad W_1 = W_2^{-1} = W_3 = W_4^{-1} = A_1^{x_1}\cdots A_k^{x_k}
\]
is a cogrowth word \. if and only if \.  $\bx = (x_1, \dots, x_k)$ is a root of $f$.

\end{corollary}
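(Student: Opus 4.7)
The plan is to observe that this corollary is essentially a direct reading of Lemma~\ref{mainlemma} through the definition of a cogrowth word. Recall that a word in the generators is a cogrowth word precisely when the product of its letters equals the identity matrix $I_m$.

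First, I would substitute the given form of $W_1, W_2, W_3, W_4$ into the word. Since $W_1 = W_3 = A$ and $W_2 = W_4 = A^{-1}$, where $A := A_1^{x_1}\cdots A_k^{x_k}$, the word $PW_1QW_2P^{-1}W_3Q^{-1}W_4$ spells out exactly the product
\[
PAQA^{-1}P^{-1}AQ^{-1}A^{-1}.
\]
Next, I would apply Lemma~\ref{mainlemma} directly to conclude that this product equals $I_m + f(\bx)\ts E_{1m}$.

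Finally, the equivalence is immediate: the product equals $I_m$ if and only if $f(\bx)\ts E_{1m} = 0$, and since $E_{1m}$ is a fixed nonzero matrix unit, this holds if and only if $f(\bx) = 0$, that is, $\bx \in \CR(f)$. No step here is an obstacle; the entire content is packaged in Lemma~\ref{mainlemma}, and the only mild subtlety is the bookkeeping to verify the generators $P, Q, A_1, \ldots, A_k$ (together with their inverses) are indeed among the generating set $\CS$ used to define cogrowth, so that the given expression counts as a genuine word in $\CS^n$. This is trivially arranged by taking $\CS$ to contain $P^{\pm 1}, Q^{\pm 1}, A_1^{\pm 1}, \ldots, A_k^{\pm 1}$.
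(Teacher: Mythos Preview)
Your proposal is correct and matches the paper's approach exactly: the paper states Corollary~\ref{rootsarecogrowth} with no proof, treating it as an immediate consequence of Lemma~\ref{mainlemma}, which is precisely the argument you give. The only remark is that Lemma~\ref{mainlemma} as stated assumes $(x_1,\dots,x_k)\in\nn^k$, so if the corollary is meant for arbitrary integer exponents (as later usage suggests), one should note that the Jordan-block identity~\eqref{jordanpower} and hence the whole computation extend verbatim to $\bx\in\zz^k$; but this is a cosmetic point, not a gap in your reasoning.
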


\smallskip

\subsection{Larger families of words}\label{ss:proof-main-second}
We now have the tools to evaluate Diophantine equations, but in order to be able to eliminate extraneous words, we will need to extend the matrices defined in Lemma~\ref{mainlemma} to new matrices. Therefore the next lemmas will reduce the problem to Corollary \ref{rootsarecogrowth}. Note that we will continue referring to the new matrices as $A_i$, $P$, and $Q$ in order to connect their roles to those in Lemma~\ref{mainlemma}.

First, we extend our matrices so that the four words $W_1$, $W_2$, $W_3$, $W_4$ do in fact need to be inverses as in the statement of Lemma~\ref{mainlemma}.

\begin{lemma}\label{l:wordsaresame}
Suppose \ts $f \in \zee[x_1, \dots, x_k]$ \ts has degree \ts $\Deg:=\deg f$.
Then there exists  matrices \. $P, Q$, $A_1, \dots, A_k \in \UT(m,\zee)$ \. for some \.
$m \leq 4(\Deg + 1)\binom{\Deg + k}{k} + 8$, such that the conclusion of
Corollary \ref{rootsarecogrowth} holds, and such that every word of the form
\[
P W_1 Q W_2P^{-1} W_3 Q^{-1} W_4 \qquad \text{where \ \ $W_i \in \langle A_1^{\pm1},  \dots, A_k^{\pm k}\rangle$}
\]
is a cogrowth word  \. only if \. $W_1 = W_2^{-1} = W_3 = W_4^{-1}$.
\end{lemma}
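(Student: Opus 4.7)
My plan is to build the extension as a block-diagonal sum of four copies of the matrices from Lemma~\ref{mainlemma}, giving total size at most $4(\Deg+1)\binom{\Deg+k}{k}+8$, which matches the stated bound exactly (so there is no room for extra padding). In the first (``principal'') block we keep $P, Q, A_1, \ldots, A_k$ unchanged so that the encoding of $f$ (and hence the conclusion of Corollary~\ref{rootsarecogrowth}) is preserved. In each of the three auxiliary blocks we keep $\tilde{A}_i|_\alpha = A_i$ but replace $\tilde{P}|_\alpha$ or $\tilde{Q}|_\alpha$ by the identity in order to extract an additional constraint; e.g., $(\tilde{P}|_\alpha, \tilde{Q}|_\alpha) = (I, I), (P, I), (I, Q)$.

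The cogrowth condition $\tilde{P}W_1\tilde{Q}W_2\tilde{P}^{-1}W_3\tilde{Q}^{-1}W_4 = I$ then decomposes into four simultaneous equations in $\UT(m_0, \zz)$ -- the original one together with
\[
W_1 W_2 W_3 W_4 = I, \qquad P W_1 W_2 P^{-1} W_3 W_4 = I, \qquad W_1 Q W_2 W_3 Q^{-1} W_4 = I.
\]
The first yields $W_4 = (W_1 W_2 W_3)^{-1}$, and substituting this into the latter two reduces them to the commutator conditions $[P, W_1 W_2] = I$ and $[Q, W_2 W_3] = I$.

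The main obstacle is converting these commutator conditions into the full equalities $W_1 W_2 = I$ and $W_2 W_3 = I$ in $\langle A_1^{\pm 1}, \ldots, A_k^{\pm 1}\rangle$. For this I would exploit the very restricted structure of $P = I + \sum_\bd E_{1, \alpha_\bd}$ and $Q = I + \sum_\bd b_\bd E_{\beta_\bd, m}$: a direct computation shows that $[P, X] = I$ for $X \in \UT(m_0, \zz)$ is equivalent to a linear equation $v^T X = v^T$ on the rows $\alpha_\bd$ of $X$, and analogously $[Q, X] = I$ pins down the columns $\beta_\bd$. Since each $A_i$ is supported in precise positions within the interior Jordan blocks $B_{\bd, i}$, these linear conditions are highly restrictive when $X$ ranges over $\langle A_1, \ldots, A_k\rangle$. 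The bulk of the work is a careful case analysis, block $\bd$ by block $\bd$, showing that any $X \in \langle A_i\rangle$ satisfying both constraints (top-row triviality inside every $B_{\bd, i}$ from $[P,X]=I$, and rightmost-column triviality from $[Q,X]=I$) must be the identity; this uses the staircase overlap structure of consecutive $B_{\bd, i}$ and $B_{\bd, i+1}$.

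Once $W_1 W_2 = W_2 W_3 = I$ are established, one concludes $W_2 = W_1^{-1}$, $W_3 = W_2^{-1} = W_1$, and $W_4 = (W_1 W_2 W_3)^{-1} = W_1^{-1}$, giving $W_1 = W_2^{-1} = W_3 = W_4^{-1}$ as required. The principal-block equation then reduces to $PWQW^{-1}P^{-1}WQ^{-1}W^{-1} = I$ with $W := W_1$, which is precisely the setting of Corollary~\ref{rootsarecogrowth}, completing the proof.
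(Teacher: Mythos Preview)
Your plan has a genuine gap. The two commutator conditions you extract are
\[
[P',\,W_1W_2]=I \qquad\text{and}\qquad [Q',\,W_2W_3]=I,
\]
which live on \emph{different} elements $W_1W_2$ and $W_2W_3$; you cannot combine ``first-row triviality'' and ``last-column triviality'' on a single $X$ as your write-up suggests. More seriously, the $Q'$-constraint is too weak to force $W_2W_3=I$: since $Q' = I + \sum_{\bd} b_{\bd}\,E_{\beta_{\bd},m}$, the condition $[Q',Y]=I$ only constrains the last column of the block $Y_{\bd}$ when $b_{\bd}\neq 0$. For a generic $f$ most $b_{\bd}$ vanish, so most blocks see no constraint at all. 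Concretely, take $f(x_1,x_2)=x_1x_2$, so $b_{(1,1)}=1$ and every other $b_{\bd}=0$. Then $W_1=W_2=I$, $W_3=(A_1)^n$, $W_4=(A_1)^{-n}$ satisfies all four of your block equations: the $(I,I)$-block gives $W_3W_4=I$; the $(P,I)$-block gives $[P',I]=I$; the $(I,Q)$-block gives $[Q',(A_1')^n]=I$, which holds because row and column $\beta_{(1,1)}$ of $(A_1')^n$ are trivial (in the $(1,1)$-block, $A_1'$ acts as $J_2\oplus I_1$); and the principal block reduces to $[P',Q']\cdot[Q',(A_1')^n]=I$, which also holds. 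Yet $W_1\neq W_3$ for $n\neq 0$, so your construction does not force $W_1=W_2^{-1}=W_3=W_4^{-1}$.

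The paper avoids this entirely by \emph{not} using a block-diagonal extension. It places off-diagonal $I_{m'}$ blocks inside $P$ and $Q$ (in positions $(2,4)$ and $(3,4)$ of a $4\times 4$ block structure) and lets $A_i$ act as $A_i'$ only in the $(1,1)$ and $(4,4)$ blocks. A direct matrix computation then produces the entries $W_3'W_4'(I-W_2'W_1')$, $W_4'(I-W_2'W_3')$, and $W_1'W_2'W_3'W_4'$ in the product, and setting these equal to $0$, $0$, $I$ immediately yields the desired equalities---no centralizer analysis is needed, and no dependence on the coefficients $b_{\bd}$ enters.
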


\begin{proof}
Let $P', Q', A_1', \dots, A_k'$ be the matrices produced by Lemma~\ref{mainlemma}. Define
{\small
\begin{align*}
P \, := \, \begin{bmatrix}
P' & 0 & 0 & 0 \\
0 & I_m & 0 & I_m \\
0 & 0 & I_m & 0 \\
0 & 0 & 0 & I_m
\end{bmatrix}\., \qquad
Q \, := \, \begin{bmatrix}
Q' & 0 & 0 & 0 \\
0 & I_m & 0 & 0 \\
0 & 0 & I_m & I_m \\
0 & 0 & 0 & I_m
\end{bmatrix}\., \qquad
A_i \, := \, \begin{bmatrix}
A_i' & 0 & 0 & 0 \\
0 & I_m & 0 & 0 \\
0 & 0 & I_m & 0 \\
0 & 0 & 0 & A_i'
\end{bmatrix} \..
\end{align*}
}
If $W_1 = A_{i_1}^{\pm 1} \cdots A_{i_s}^{\pm 1}$, then define
\[
W_1' \, := \, (A_{i_1}')^{\pm 1} \cdots (A_{i_s}')^{\pm 1}
\]
and analogously for $W_2', W_3', W_4'$.
A computation then shows
{\small
\[
P W_1 Q W_2P^{-1} W_3 Q^{-1} W_4
 = \begin{bmatrix}
V & 0 & 0 & 0 \\
0 & I_m & 0 &  W_3'W_4'(I_m - W_2'W_1') \\
0 & 0 & I_m & W_4'(I_m - W_2'W_3') \\
0 & 0 & 0 & W_1'W_2'W_3'W_4'
\end{bmatrix}
\]
}
where \. $V = P' W_1' Q' W_2'(P')^{-1} W_3' (Q')^{-1} W_4'$.
The construction in Lemma~\ref{mainlemma} shows that Corollary \ref{rootsarecogrowth} holds.

Moreover, for this matrix to be the identity, we must have
$$
W_3'W_4'(I_m - W_2'W_1') \. = \.  W_4'(I_m - W_2'W_3') \. = \. 0 \quad \text{and} \quad
W_1'W_2'W_3'W_4' \. = \. I_m\,,
$$
which implies \ts $W_1' = (W_2')^{-1} = W_3' = (W_4')^{-1}$. This gives  \ts $W_1 = W_2^{-1} = W_3 = W_4^{-1}$ as required.
\end{proof}

We now know that the $W_i$ need to evaluate to the same matrix, but Lemma~\ref{mainlemma} is only able to speak about subwords. So we must extend our matrices again, this time so that the only possible cogrowth words are equivalent to subwords.

We do this by noticing that if we flip the Jordan block construction from Lemma~\ref{mainlemma} so the blocks go from bottom-right to top-left instead, then instead of evaluating monomials the above-Jordan-block terms will  be zero. That allows us to prove the following:

\begin{lemma}\label{subword}
Let \ts $f \in \zee[x_1, \dots, x_k]$ \ts with \ts  $\Deg = \deg f \geq 2$. Then there exists
matrices \. $P, Q, A_1, \dots, A_k \in \UT(m,\zee)$ \. for some
\[
m \, \leq \, 4(\Deg + 1)\tbinom{\Deg + k}{k} \. + \. 8 \. + \. \tfrac{1}{2}\tbinom{\Deg + k}{k}(\Deg + 1)^3\ts,
\]
 such that the conclusion of Corollary~\ref{rootsarecogrowth} holds, and such that every word of the form
\begin{equation} \label{niceword}
P W_1 Q W_2P^{-1} W_3 Q^{-1} W_4
\end{equation}
where \ts $W_i \in \langle A_1^{\pm1},  \dots, A_k^{\pm k}\rangle $, is a cogrowth word \ts
only if \ts $W_1 = W_2^{-1} = W_3 = W_4^{-1} = A_1^{x_1} \cdots A_k^{x_k}$ \ts for some integers \ts
$x_1, \dots, x_k$.

\end{lemma}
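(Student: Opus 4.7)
The strategy is to further augment the matrices from Lemma~\ref{l:wordsaresame} with an additional block-diagonal component that detects when $W_1$, viewed as a matrix, fails to equal $A_1^{x_1}\cdots A_k^{x_k}$ for some tuple of integers. Writing the new matrices as $P = P^{\mathrm{old}} \oplus P^{\mathrm{new}}$, $Q = Q^{\mathrm{old}} \oplus Q^{\mathrm{new}}$, and $A_i = A_i^{\mathrm{old}} \oplus A_i^{\mathrm{new}}$, the new block should make $P W_1 Q W_1^{-1} P^{-1} W_1 Q^{-1} W_1^{-1}$ nontrivial whenever $W_1$ is not of the required form, while remaining the identity when $W_1$ does have that form, so as not to destroy the conclusion of Corollary~\ref{rootsarecogrowth}.

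Guided by the hint about reversing the Jordan block construction, I would define, for each multi-index $\bd$ with $|\bd| \le \Deg$, matrices $\tilde B_{\bd, i}$ analogous to the $B_{\bd, i}$ of \eqref{blocksdefined} but with the single nontrivial Jordan block positioned so that the blocks run from bottom-right to top-left rather than top-left to bottom-right. In this reversed arrangement, the product $\tilde B_{\bd, 1}^{x_1} \cdots \tilde B_{\bd, k}^{x_k}$ has vanishing entries at precisely the positions that the forward arrangement used to encode $\binom{\bx}{\bd}$, because the path through the overlapping Jordan blocks needed to reach those positions now requires traversal in the reverse index order. By contrast, any $W_1\in\langle A_1^{\pm1},\dots,A_k^{\pm1}\rangle$ whose matrix image is not of the form $A_1^{x_1}\cdots A_k^{x_k}$ produces a nonzero contribution along this would-be anti-diagonal in at least one block $\bd$.

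I would then choose $P^{\mathrm{new}}$ and $Q^{\mathrm{new}}$ in analogy with $P$ and $Q$ in the proof of Lemma~\ref{mainlemma}, so that they pick out the relevant entries of the reversed blocks into the top-right corner of the new block. A case analysis mirroring \eqref{matrixmult}--\eqref{pinvm} should then show that when $W_1 = A_1^{x_1}\cdots A_k^{x_k}$ the contribution of the new block to $P W_1 Q W_1^{-1} P^{-1} W_1 Q^{-1} W_1^{-1}$ is the identity, whereas for any other $W_1$ the nonzero entry persists and prevents the full four-fold commutator from being the identity. Together with Lemma~\ref{l:wordsaresame}, which already forces $W_1 = W_2^{-1} = W_3 = W_4^{-1}$, this pins down $W_1$ to the required form.

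The main obstacle is arranging the reversed construction so that the deviation of $W_1$ from the canonical form $A_1^{x_1}\cdots A_k^{x_k}$ is detected by the \emph{full} four-fold expression rather than merely by $W_1$ itself: one must verify that the nonzero entries produced in the new block survive the conjugation $W_1 Q^{\pm 1} W_1^{-1}$ and the outer bracket with $P$, without cancelling against anything contributed by the old block. The size estimate is then a bookkeeping exercise: each multi-index $\bd$ with $|\bd| \le \Deg$ contributes at most $\tfrac{1}{2}(\Deg + 1)^3$ to the new block, and there are $\binom{\Deg+k}{k}$ such multi-indices, yielding the additional $\tfrac{1}{2}\binom{\Deg+k}{k}(\Deg+1)^3$ on top of the bound $4(\Deg+1)\binom{\Deg+k}{k} + 8$ inherited from Lemma~\ref{l:wordsaresame}.
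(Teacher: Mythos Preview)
Your high-level strategy---reverse the Jordan block arrangement so that $A_1^{x_1}\cdots A_k^{x_k}$ (and only such products) leave certain entries zero, then augment $P,Q,A_i$ with a new diagonal component that detects any nonzero such entry---is exactly what the paper does. The paper realizes the reversal via the automorphism $\Phi(X)=\varphi(X)^{-1}$ (antidiagonal reflection composed with inversion), and proves your detection claim as Sublemma~\ref{automorphism}: an element $W$ of $\langle B_{\bd,1}^{\pm1},\dots,B_{\bd,k}^{\pm1}\rangle$ equals some $B_{\bd,1}^{x_1}\cdots B_{\bd,k}^{x_k}$ if and only if $\Phi(W)$ vanishes \emph{at every entry} outside the nontrivial Jordan blocks of the $\Phi(B_{\bd,i})$.

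The gap is in how you propose to read those entries out. Mimicking the $P,Q$ of Lemma~\ref{mainlemma} deposits a \emph{linear combination} of the selected entries into a single top-right corner; that combination can vanish by cancellation even when some individual entries are nonzero, so one new block per $\bd$ cannot certify that $W_1$ has the required form. The paper avoids this with a second ingredient you have not supplied (Sublemma~\ref{pickout}): for any fixed $1<a\le b<q$, taking $P=I_q+E_{1,a}$ and $Q=I_q+E_{b,q}$ gives
\[
P\,V\,Q\,V^{-1}\,P^{-1}\,V\,Q^{-1}\,V^{-1}\;=\;I_q+[V]_{a,b}\,E_{1,q},
\]
so the four-fold word extracts exactly one entry of $V=\Phi(W_1)$. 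One then appends a \emph{separate} block $I_1\oplus\Phi(B_{\bd,\cdot})\oplus I_1$ for \emph{each} off-block position $(\alpha,\beta)$; the whole word is cogrowth only if every such block returns the identity, i.e.\ every tested entry vanishes. This per-entry scheme is also what accounts for the size bound: $\binom{\Deg+k}{k}$ choices of $\bd$, at most $\tfrac12(\Deg+1)^2$ positions $(\alpha,\beta)$ each, and block size at most $\Deg+1$, giving the extra $\tfrac12\binom{\Deg+k}{k}(\Deg+1)^3$. Your estimate ``each $\bd$ contributes $\tfrac12(\Deg+1)^3$'' is numerically consistent with this, but does not follow from a single Lemma~\ref{mainlemma}-style block.
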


\begin{proof}
Let $P', Q', A_1', \dots, A_k'$ be the matrices produced by Lemma~\ref{mainlemma}. We consider the structure of matrices in  $\langle (A_1')^{\pm 1}, \dots, (A_k')^{\pm} \rangle$ more deeply. Each consists of a collection of blocks defined as $B_{\bd, i}$ in \eqref{topright}. Fix any particular~$B_\bd$. By construction, it is of size \ts $|\bd| + 1$.

For any matrix \ts $X \in \UT(L,\zee)$, let \ts $\varphi(X)$ \ts be the matrix obtained by reflecting $X$ along the main antidiagonal. Then $\Phi: X  \mapsto \varphi(X)^{-1}$ is an automorphism of $\UT(L,\zee)$. Now, $B_{\bd, 1}, \dots, B_{\bd, k}$ have their nontrivial blocks arranged from top left to bottom right; so $\Phi(B_{\bd, 1}), \dots, \Phi(B_{\bd, k})$ have their nontrivial blocks arranged from bottom right to top left.

For example, if
\[
B_{\bd, 1} = \begin{bmatrix}
\red{1} & \red{1} & \red{0} & 0 & 0 & 0\\
\red{0} & \red{1} & \red{1} & 0 & 0 & 0\\
\red{0} & \red{0} & \red{1} & 0 & 0 & 0 \\
0 & 0 & 0 & 1 & 0 & 0\\
0 & 0 & 0 & 0 & 1 & 0\\
0 & 0 & 0 & 0 & 0 & 1 \\
\end{bmatrix} \qquad B_{\bd, 2} = \begin{bmatrix}
1 & 0 & 0 & 0 & 0 & 0\\
0 & 1 & 0 & 0 & 0 & 0\\
0 & 0 & \blu{1} & \blu{1} & \blu{0} & \blu{0} \\
0 & 0 & \blu{0} & \blu{1} & \blu{1} & \blu{0}\\
0 & 0 & \blu{0} & \blu{0} & \blu{1} & \blu{1} \\
0 & 0 & \blu{0} & \blu{0} & \blu{0} & \blu{1} \\
\end{bmatrix}
\]

then

\[
\Phi(B_{\bd, 1}) = \begin{bmatrix}
1 & 0 & 0 & 0 & 0 & 0\\
0 & 1 & 0 & 0 & 0 & 0\\
0 & 0 & 1 & 0 & 0 & 0 \\
0 & 0 & 0 & \red{1} & \red{-1} & \red{1}\\
0 & 0 & 0 & \red{0} & \red{1} & \red{-1}\\
0 & 0 & 0 & \red{0} & \red{0} & \red{1} \\
\end{bmatrix} \qquad \Phi(B_{\bd, 2}) = \begin{bmatrix}
\blu{1} & \blu{-1} & \blu{1} & \blu{-1} & 0 & 0\\
\blu{0} & \blu{1} & \blu{-1} & \blu{1} & 0 & 0\\
\blu{0} & \blu{0} & \blu{1} & \blu{-1} & 0 & 0 \\
\blu{0} & \blu{0} & \blu{0} & \blu{1} & 0 & 0\\
0 & 0 & 0 & 0 & 1 & 0\\
0 & 0 & 0 & 0 & 0 & 1 \\
\end{bmatrix}
\]

\begin{sublemma} \label{automorphism}
A matrix $W \in \langle B_{\bd, 1}^{\pm1},  \dots, B_{\bd, k}^{\pm 1}\rangle$ is equal to $B_{\bd, 1}^{x_1} \cdots B_{\bd, k}^{x_k}$ for some integers $x_1, \dots, x_k$ if and only if $\Phi(W)$ is zero outside of the nontrivial Jordan blocks of $\Phi(B_{\bd, 1}), \dots, \Phi(B_{\bd, k})$.
\end{sublemma}
\begin{proof}
The forward direction is immediate: because the nontrivial Jordan blocks of the $\Phi(B_{\bd, i})$ are in bottom right to top left order,
the matrix
\[
\Phi \left(B_{\bd, 1}^{x_1} \cdots B_{\bd, k}^{x_k} \right) \, = \, \Phi(B_{\bd, 1})^{x_1} \. \cdots \. \Phi(B_{\bd, k})^{x_k}
\]
will not have any nonzero entries  outside the nontrivial Jordan blocks of the matrices \ts $\Phi(B_{\bd, i})$.

Conversely, suppose $\Phi(W)$ is zero outside of the nontrivial Jordan blocks of \ts $\Phi(B_{\bd, i})$.
Since~$W$ is in the subgroup generated by the \ts $B_{\bd, i}$, we can write
\begin{equation} \label{spanning}
W \ts = \ts B_{\bd, j_1}^{\varepsilon_1} \cdots B_{\bd, j_m}^{\varepsilon_m}
\end{equation}
for some integer~$m$, indices \ts $1 \leq j_m \leq k$, and exponents \ts
$\varepsilon_m = \pm 1$. Let \ts $y_1, \dots, y_k$ \ts be the net number of \ts
$B_{\bd, 1}, \dots, B_{\bd, k}$ \ts in expression \eqref{spanning}. In other words, we have:
\[
y_i \. = \. \sum_{s\ :\ j_s = i} \varepsilon_s\..
\]
By assumption, $\Phi(W)$ agrees with \ts $\Phi(B_{\bd, 1})^{y_1} \cdots \Phi(B_{\bd, k})^{y_k}$ \ts
outside of the nontrivial Jordan blocks. Fix some index $\alpha, \beta$ within the nontrivial Jordan block of $B_{\bd, \gamma}$. Then (\ref{spanning}) implies that
\[
\Phi(W) \. = \. \Phi(B_{\bd, j_1})^{\varepsilon_1} \ \cdots \. \Phi(B_{\bd, j_m})^{\varepsilon_m}\..
\]
Note that the only terms that can contribute to the $\alpha, \beta$ index are those where \ts $j_s = \gamma$. This means
\[
[\Phi(W)]_{\alpha, \beta} = [\Phi(B_{\bd, \gamma})^{y_\gamma}]_{\alpha, \beta} = [\Phi(B_{\bd, 1})^{y_1} \cdots \Phi(B_{\bd, k})^{y_k}]_{\alpha, \beta}
\]
Since this holds for any $\alpha, \beta$ we get
\[
\Phi(W) = \Phi(B_{\bd, 1})^{y_1} \cdots \Phi(B_{\bd, k})^{y_k} = \Phi(B_{\bd, 1}^{y_1} \cdots B_{\bd, k}^{y_k})\..
\]
The result follows since $\Phi$ is a bijection.
\end{proof}

The next sublemma will allow us to force particular entries in $\Phi(W)$ to be zero.
\begin{sublemma}\label{pickout}
Let \ts $V\in \UT(q,\zee)$ \ts and let \ts $1 < a \leq b <  q$. Then
$$
\big(I_q + E_{1, a}\big) V \big(I_q + E_{b, L}\big) V^{-1} \big(I_q + E_{1, a}\big)^{-1} V
\big(I_q + E_{b, q}\big)^{-1} V^{-1}  \, = \, I_q \. + \. [V]_{a,b}\ts E_{1, q}\..
$$
\end{sublemma}

\begin{proof}
The left-hand side is equal to
\[
(I_q + E_{1, a}) V (I_q + E_{b, q}) V^{-1} (I_q - E_{1, a}) V (I_q - E_{b, q}) V^{-1}
\]
Expanding this and using the fact that $V$ and $V^{-1}$ are upper triangular gives \.
$I_q + E_{1, a} V E_{b, q} V^{-1}$. This equals the right-hand side.
\end{proof}

\smallskip

To finish the proof of Lemma~\ref{l:wordsaresame}, we
construct our matrices as follows. Let \. $P'', Q'', A_1'', \dots, A_k''$ \.
be the matrices obtained in Lemma~\ref{l:wordsaresame}. For every $B_{\bd}$
in the construction of $A_i'$, and every $(\alpha, \beta)$ above the
nontrivial Jordan blocks of \ts $\Phi(B_{\bd, i})$,
let
\begin{equation*}
\aligned
P \, & := \, P'' \. \oplus \. \left( I_{|\bd| + 3} + E_{1, \alpha + 1} \right) \\ 
Q \, & := \, Q'' \. \oplus \. \left(  I_{|\bd| + 3} + E_{\beta + 1, |\bd| + 3} \right) \\ 
A_i \, & := \, A_i'' \. \oplus \.  I_1 \oplus \Phi(B_{\bd, i}) \oplus I_1 
\endaligned
\end{equation*}
for all \ts $1\le i \le k$.
%
%
There are at most $\binom{\Deg + k}{k}$ of the $B_\bd$'s, and for each of them we append at most $\frac{1}{2}(\Deg + 1)^2$ new matrices of size at most $\Deg + 1$. Therefore these new matrices have size
\[
m \, \le \,
4(\Deg + 1)\binom{\Deg + k}{k} + 8 + \frac{1}{2}\binom{\Deg + k}{k}(\Deg + 1)^3,
\]
as desired.

Suppose a word of the form (\ref{niceword}) is cogrowth. Then by Lemma~\ref{l:wordsaresame} we have $W_1 = W_2^{-1} = W_3 = W_4^{-1}$. Therefore, by construction and Sublemma \ref{pickout} all of the entries of $\Phi(W_1)$ outside of the nontrivial Jordan blocks are zero. Then, Sublemma \ref{automorphism} implies that $W_1 =
A_1^{y_1} \cdots A_k^{y_k}$ for the $y_i$ defined in Sublemma \ref{automorphism}.  This completes the proof of Lemma~\ref{l:wordsaresame}.
\end{proof}

\smallskip

\begin{corollary} \label{uniqueshortest}

For a fixed root $\bx = (x_1, \dots, x_k)$ of $f$, the word
\[
V = A_1^{x_1} \circ \cdots \circ A_k^{x_k}
\]
is the unique shortest word that evaluates to $A_1^{x_1}\cdots  A_k^{x_k}$.
\end{corollary}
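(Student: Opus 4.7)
The plan is to prove the statement in two parts: (i) any word $W \in \langle A_1^{\pm 1},\ldots,A_k^{\pm 1}\rangle$ evaluating to $M := A_1^{x_1}\cdots A_k^{x_k}$ has length at least $|x_1|+\cdots+|x_k|$ (the length of $V$), and (ii) equality forces $W = V$. Both parts exploit the fact that, by the nested constructions of Lemmas~\ref{mainlemma},~\ref{l:wordsaresame}, and~\ref{subword}, each $A_i$ is block-diagonal and contains every $B_{\bd,i}$ (with $|\bd|\le \Deg$) as a direct summand, so restricting a word in the $A_i^{\pm 1}$ to such a block yields the analogous word in the $B_{\bd,i}^{\pm 1}$.

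For the length lower bound, I use the super-diagonal homomorphism $\Psi : \UT(m,\zz) \to \zz^{m-1}$ sending $N$ to the vector $(N_{l,l+1})_{l=1}^{m-1}$; this is a homomorphism since $(XY)_{l,l+1} = X_{l,l+1} + Y_{l,l+1}$ for upper unitriangular $X,Y$. Since $\Deg \geq 2$, the $2\times 2$ block $B_{e_i}$ (with $|e_i|=1$) appears in $A_i$; there $A_i$ restricts to $J_2$ while $A_l$ for $l\ne i$ restricts to $I_2$. Hence $\Psi(A_1),\ldots,\Psi(A_k)$ are linearly independent in $\zz^{m-1}$. For any word $W = A_{j_1}^{\varepsilon_1}\cdots A_{j_s}^{\varepsilon_s}$ evaluating to $M$, applying $\Psi$ forces the net count $y_i := \sum_{l:\,j_l=i}\varepsilon_l$ to equal $x_i$ for each $i$. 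Therefore $s \geq \sum_i |y_i| = \sum_i |x_i|$, with equality only when every occurrence of $A_i$ in $W$ has exponent $\mathrm{sgn}(x_i)$.

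For uniqueness at minimum length, fix a pair $i < j$ with $x_i x_j \neq 0$ and consider the $3 \times 3$ block $B_{\bd}$ where $\bd$ has $d_i = d_j = 1$ and all other components zero (included since $|\bd|=2 \le \Deg$). There $A_i$ restricts to the matrix with a single nonzero above-diagonal entry (equal to $1$) in position $(1,2)$, $A_j$ restricts to the one with a single $1$ in position $(2,3)$, and every other $A_l$ restricts to $I_3$. Using the identity $(M_1\cdots M_s)_{1,3} = \sum_{l<m}(M_l)_{1,2}(M_m)_{2,3}$, valid for $3\times 3$ upper unitriangular matrices, the $(1,3)$-entry of the restriction of $W$ to this block equals $\mathrm{sgn}(x_i)\,\mathrm{sgn}(x_j)\,N_{ij}$, where $N_{ij}$ counts ordered pairs consisting of an occurrence of $A_i^{\pm 1}$ followed by a later occurrence of $A_j^{\pm 1}$. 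The target value, read off from $B_{\bd,i}^{x_i}B_{\bd,j}^{x_j}$, is $x_i x_j$, so $N_{ij} = |x_i||x_j|$; this maximum is attained only when every $A_i^{\pm 1}$ precedes every $A_j^{\pm 1}$ in $W$. Running this argument over all pairs $i<j$ (trivially when $x_i x_j = 0$) shows $W = A_1^{x_1}\circ\cdots\circ A_k^{x_k} = V$.

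The main obstacle is purely bookkeeping: one must carefully trace the three nested constructions to confirm that each $B_{\bd,i}$ survives as a literal direct summand of $A_i$, so that restricting a word to the corresponding block is well-defined and commutes with multiplication. Once that is granted, both steps reduce to elementary computations in $2\times 2$ and $3\times 3$ upper unitriangular matrices.
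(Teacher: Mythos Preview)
Your proof is correct and uses essentially the same idea as the paper: the $|\bd|=2$ block $B_{e_i+e_j}$ (a $3\times 3$ Heisenberg block) detects inversions between occurrences of $A_i$ and $A_j$, forcing any minimal-length word to be the canonical one. Your write-up is in fact more complete than the paper's very terse argument: you explicitly justify the net-count claim $y_i=x_i$ via the super-diagonal homomorphism and the $|\bd|=1$ blocks (which the paper simply asserts), and you work directly in the original $B_{e_i+e_j}$ block, whereas the paper's phrasing (``the above-diagonal entry \dots\ will be nonzero'') is most naturally read as referring to the flipped block $\Phi(B_{e_i+e_j})$ added in Lemma~\ref{subword}, where the target $(1,3)$-entry is~$0$ rather than $x_ix_j$---an equivalent but dual formulation.
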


\begin{proof}
We only need to prove the case $k \geq 2$.  Suppose to the contrary, there is some other word $V'$ which also evaluates to \.
$A_1^{x_1} \cdots A_k^{x_k}$. Since the net number of $A_i$'s in $V'$ needs to be $x_i$, it must be that $V'$
is some nontrivial permutation of $V$.

This means there exists some \ts $j_1 < j_2$\ts, such that an $A_{j_2}^{\pm 1}$ appears before an $A_{j_1}^{\pm 1}$ in the word~$W_i$.
But then the above-diagonal entry in the block corresponding to \ts
$\binom{j_1}{1}\binom{j_2}{1}$ \ts will be nonzero, so this cannot be a cogrowth word.
\end{proof}

\smallskip

\subsection{The construction} \label{ss:main-proof-three}
We are now ready to construct our generating sets \ts $\mathcal{S}$ \ts and \ts $\mathcal{T}$ as in Theorem~\ref{t:maintheorem}. For a fixed polynomial \ts $f\in \zz[x]$, let \. $P'$, $Q'$, $A_1'$, \dots, $A_k' \in \UT(m,\zee)$ \. be the matrices given by Lemma~\ref{subword}.
Construct new matrices \. $A_i := A_i' \oplus I_3$, for \ts $1 \leq i \leq k$, and let
\begin{align*}
P \. := \. P' \oplus \begin{bmatrix}
1 & 1 & 0 \\
0 & 1 & 0 \\
0 & 0 & 1
\end{bmatrix}\,, \quad
Q \. := \. Q' \oplus \begin{bmatrix}
1 & 0 & 0 \\
0 & 1 & 1 \\
0 & 0 & 1
\end{bmatrix}\,, \quad
R \. := \. I_m \oplus \begin{bmatrix}
1 & 0 & -1 \\
0 & 1 & 0 \\
0 & 0 & 1
\end{bmatrix}.
\end{align*}

Denote by \ts $\ce_m=\{I_m\pm E_{i,i+1} \.:\. 1\le i <m\}$ \ts the standard generating
set of \ts $\UT(m,\zz)$. Fix be a positive integer \ts $u$ \ts to be determined later.
Let
\begin{equation}\label{eq:ST}
\aligned
\CS \, &:= \, \{A_1^{\pm 1}, A_2^{\pm 1}, \dots, A_k^{\pm 1}\} \ \cup \ u \cdot \left\{P^{\pm 1}, Q^{\pm 1} \right\} \ \cup \ u^{10} \cdot \ce_{m+3}\., \ \text{and} \\
\CT \, &:= \, \CS \ \cup  \ u^5\cdot \left\{R^{\pm 1} \right\},
\endaligned
\end{equation}
where by \ts $n\cdot X$ \ts we denote $n$ copies of the set~$X$.

Our next lemma will exploit the modular condition in Theorem \ref{t:maintheorem} to eliminate any word that does not fit the pattern of Lemma~\ref{subword}.

\begin{lemma}\label{l:modulos}
Let $f \in \zee[x_1, \dots, x_k]$, and define $\mathcal{S}$ and $\mathcal{T}$ as in~\eqref{eq:ST}.
Let $c_n$ be the number of cogrowth words of length $n$ of the form
\begin{equation*}
P V_1 Q V_2P^{-1} V_3 Q^{-1} V_4\,, \quad \text{where \ $V_i$ \ are words in \ $\langle A_1^{\pm1},  \dots, A_k^{\pm k}\rangle $.}
\end{equation*}
 Then:
\[
\cog_\mathcal{T}(n) \. - \. \cog_\mathcal{S}(n) \, \equiv \, 2\ts n \ts (n - 1) \ts c_{n - 1} \ts u^9 \ \mod  u^{10}.
\]
\end{lemma}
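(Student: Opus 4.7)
The plan is to classify cogrowth words in $\mathcal{T}$ of length $n$ using at least one generator from $\mathcal{T}\setminus\mathcal{S}$, track their weights modulo $u^{10}$, and read off the $u^9$ coefficient. By the multiplicities in~\eqref{eq:ST}, each $R^{\pm 1}$-letter contributes a factor $u^5$, each $\{P^{\pm 1},Q^{\pm 1}\}$-letter contributes $u$, each $A_i^{\pm 1}$ contributes $1$, and each $\ce_{m+3}$-letter contributes $u^{10}$. Hence modulo $u^{10}$ only words using no $\ce_{m+3}$-letters, exactly one $R^{\pm 1}$, and at most four $\{P^{\pm 1},Q^{\pm 1}\}$-letters can contribute; those with fewer than four turn out to be incompatible with cogrowth, as the next step shows.

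Next, I pin down the skeleton using the action on the last three coordinates, where $A_i$ acts trivially while $\hat P := I_3+E_{12}$, $\hat Q := I_3+E_{23}$, $\hat R := I_3-E_{13}$ satisfy $[\hat P,\hat Q]=I_3+E_{13}=\hat R^{-1}$, with $\hat R$ central. Expanding the product in the Heisenberg--BCH form, the last-three-coordinate product of a word (once the net powers of $P,Q$ are forced to vanish) equals $I_3+(c(\sigma)-r_++r_-)E_{13}$, where $c(\sigma):=\sum_{i<j}x_iy_j$ depends only on the relative ordering $\sigma$ of the $\{P^{\pm 1},Q^{\pm 1}\}$-letters and $(r_+,r_-)$ counts the $R^{\pm 1}$-letters. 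Cogrowth therefore forces $c(\sigma)=r_+-r_-$. With exactly one $R^{\pm 1}$ this equals $\pm 1$, and a case check on the partitions $(p_+,p_-,q_+,q_-)$ with $p\le 4$ rules out every possibility except $(1,1,1,1)$. Enumerating the $4!$ orderings of $\{P,P^{-1},Q,Q^{-1}\}$ pinpoints exactly $8$ good $\sigma$'s: the $4$ cyclic rotations of $PQP^{-1}Q^{-1}$ (all with $c=+1$, paired with $R$) and the $4$ cyclic rotations of $(PQP^{-1}Q^{-1})^{-1}=QPQ^{-1}P^{-1}$ (all with $c=-1$, paired with $R^{-1}$). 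Each surviving cogrowth word then has weight $u^{5+4}=u^9$.

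Finally, I would set up a cyclic-shift bijection to count these words. Because $\hat R$ is central, deleting and reinserting the unique $R^{\pm 1}$ defines a bijection between such length-$n$ cogrowth words in $\mathcal{T}$ and pairs $(W',j)$, where $W'$ is a reduced word of length $n-1$ whose first-$m$-coordinate product is $I_m$ and whose special-letter ordering is one of the $8$ good orderings, and $j\in\{1,\dots,n\}$ is the insertion position. By Lemma~\ref{subword}, reduced words of the canonical form $PV_1QV_2P^{-1}V_3Q^{-1}V_4$ are exactly the $c_{n-1}$ words $W^{(\bx)}$ with $V_1=V_2^{-1}=V_3=V_4^{-1}=A_1^{x_1}\cdots A_k^{x_k}$ and $f(\bx)=0$. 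Cyclic shifts preserve being cogrowth since $s_2\cdots s_{n-1}s_1=s_1^{-1}(s_1\cdots s_{n-1})s_1$; conversely, any cogrowth-like word with a $c=+1$ ordering can be uniquely cyclically shifted so that its $P$ appears first, after which Lemma~\ref{subword} identifies it with some $W^{(\bx)}$. Hence the $c=+1$ reduced words are exactly the $(n-1)c_{n-1}$ cyclic shifts of the $W^{(\bx)}$ (these are pairwise distinct because $P$ appears only once, precluding any nontrivial period), and inverting yields the $c=-1$ reduced words as the $(n-1)c_{n-1}$ cyclic shifts of $(W^{(\bx)})^{-1}$. Multiplying by $n$ insertion positions and weight $u^9$ gives $\cog_{\mathcal{T}}(n)-\cog_{\mathcal{S}}(n)\equiv 2n(n-1)c_{n-1}u^9\pmod{u^{10}}$. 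The most delicate step is this cyclic-shift count: it must cover all $8$ good orderings without double-counting, which hinges on the single appearance of each of $P,P^{-1},Q,Q^{-1}$ in $W^{(\bx)}$.
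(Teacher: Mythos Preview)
Your approach is essentially the same as the paper's: filter by weights to isolate words with exactly one $R^{\pm1}$ and exactly four $\{P^{\pm1},Q^{\pm1}\}$-letters, use the Heisenberg $3\times 3$ block to pin down the skeleton as a cyclic rotation of $PQP^{-1}Q^{-1}$ (or its inverse), and then extract the factors $2$, $n$, and $n-1$ via the inversion, the $R$-insertion, and the cyclic shift. The paper performs these last three reductions in the order (inversion, cyclic shift on the length-$n$ word, then remove $R$), whereas you do (remove $R$, cyclic shift on the length-$(n-1)$ word, inversion); both yield $2n(n-1)$ and both rely on the single occurrence of $P$ to make the cyclic orbit free. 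Your Heisenberg case-check is more explicit than the paper's one-line ``the only possible words that remain have some cyclic permutation of $PQP^{-1}Q^{-1}$,'' which is a virtue.

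One small overreach: you invoke Lemma~\ref{subword} to say the canonical-form cogrowth words are exactly the $W^{(\bx)}$ with $V_1=V_2^{-1}=V_3=V_4^{-1}=A_1^{x_1}\cdots A_k^{x_k}$ \emph{as words}. Lemma~\ref{subword} only gives this equality at the level of group elements; for lengths above the minimum there will be many words $V_i$ evaluating to the same matrix. This does not damage your argument, since $c_{n-1}$ is \emph{defined} as the number of canonical-form words of length $n-1$ whose product is trivial (in the first $m$ coordinates---note that on the full $(m+3)$-block such a word has product $R^{-1}$, not $I_{m+3}$, which is exactly what you need after deleting $R$). So simply drop the appeal to Lemma~\ref{subword} here and cite the definition of $c_{n-1}$ directly; the bijection $W\leftrightarrow(\text{canonical word},\,\text{shift},\,\text{insertion slot})$ then stands on its own.
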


\begin{proof}
First, note that we can ignore all words that contain any of the standard generators.
By construction, such words will appear a multiple of $u^{10}$ times.

Second, note that the left-hand side counts the number of cogrowth words
that are in \ts $\langle \mathcal{T} \rangle$ \ts but not in~$\ts\langle \mathcal{S} \rangle$.
This corresponds to words with at least one \ts $R^{\pm 1}$.
However, words with two or more $R^{\pm 1}$ will be eliminated by the modulo condition.

Next, there is a bijection between words containing one $R$ and those containing one $R^{-1}$ given by reversing the order of the word and inverting all the elements. So let us look only at words that contain just an~$R$. This gives a factor of~$2$ on the right hand side.

In order to cancel out the $-1$ in $R$ we can only use copies of \ts $P^{\pm 1}$ \ts and \ts $Q^{\pm 1}$. But every word with an~$R$ and at least five of these will also be eliminated since the total weight would be divisible by $u^{10}$. So the only possible words that remain have some cyclic permutation of $PQP^{-1}Q^{-1}$, which gives the factor of $u^9$.

Because any cyclic permutation of a cogrowth word is still cogrowth, we can take the unique word that starts with $P$. This gives a factor of $n$ on the right hand side.

Finally, note that $R$ commutes with $P, Q$, and all the $A_i$. Since our word has exactly one $R$, we can just ignore it in counting words by looking at words of length \ts $(n - 1)$. This gives us one more factor of \ts $(n  - 1)$ on the right-hand side.
The result counts exactly \ts $c_{n - 1}$.
\end{proof}

\smallskip

The following two corollaries relate this lemma to whether or not the polynomial~$f$ has integer roots.

\smallskip

\begin{corollary}\label{onedirection}
Let \ts $f\in \zz[x_1,\ldots,x_k]$ \ts be a polynomial with no integer roots,  Then
$$
\cog_\mathcal{T}(n) \. - \. \cog_\mathcal{S}(n) \, \equiv \, 0 \ \mod u^{10}.
$$
\end{corollary}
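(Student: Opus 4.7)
The proof will essentially chain Lemma~\ref{l:modulos} with Lemma~\ref{subword} (via Corollary~\ref{rootsarecogrowth}) and immediately cash out the hypothesis that $f$ has no integer roots. The plan is to show that the sequence $\{c_n\}$ appearing in Lemma~\ref{l:modulos} is identically zero when $f$ has no roots in $\zz^k$, after which the congruence $\cog_\mathcal{T}(n) - \cog_\mathcal{S}(n) \equiv 2n(n-1)c_{n-1}u^9 \pmod{u^{10}}$ given by Lemma~\ref{l:modulos} collapses to $0$.

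First I would make explicit how $c_{n-1}$ interacts with the block structure of the extended generators. The matrices $P,Q,A_i$ from~\eqref{eq:ST} were built as block-diagonal extensions of the matrices $P',Q',A_i'$ coming from Lemma~\ref{subword}, with an added $3\times 3$ block in which each $A_i$ acts trivially. A word $PV_1QV_2P^{-1}V_3Q^{-1}V_4$ with $V_i \in \langle A_1^{\pm1},\ldots,A_k^{\pm1}\rangle$ therefore equals the identity in $\UT(m+3,\zz)$ exactly when it equals the identity in the large block coming from Lemma~\ref{subword}; in the small block it automatically reduces to $PQP^{-1}Q^{-1} = I_3 + E_{13}$, which is precisely the factor that the inserted $R$ cancels (this is what drives the proof of Lemma~\ref{l:modulos}). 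Hence $c_{n-1}$ counts words of the prescribed form which are cogrowth with respect to the matrices $P',Q',A_1',\ldots,A_k'$ of Lemma~\ref{subword}.

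Next I would apply Lemma~\ref{subword} to this count. That lemma says any such cogrowth word must satisfy $V_1 = V_2^{-1} = V_3 = V_4^{-1} = A_1^{x_1}\cdots A_k^{x_k}$ for some tuple $(x_1,\ldots,x_k) \in \zz^k$. Once the $V_i$ have this canonical form, Corollary~\ref{rootsarecogrowth} (whose conclusion is guaranteed to still hold by the construction in Lemma~\ref{subword}) pins down that the overall word is the identity if and only if $(x_1,\ldots,x_k) \in \CR(f)$. Because $f$ has no integer roots by assumption, $\CR(f) = \emptyset$, and so no such word can be cogrowth: $c_{n-1} = 0$ for every $n$.

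Finally, substituting $c_{n-1} = 0$ into Lemma~\ref{l:modulos} yields
\[
\cog_\mathcal{T}(n) - \cog_\mathcal{S}(n) \, \equiv \, 2n(n-1)\cdot 0 \cdot u^9 \, \equiv \, 0 \pmod{u^{10}},
\]
which is the desired conclusion. There is no genuine obstacle here; the entire argument is a bookkeeping exercise that stitches together the block decomposition established in Section~\ref{ss:main-proof-three}, the rigidity provided by Lemma~\ref{subword}, and the equivalence ``cogrowth $\Leftrightarrow$ root of $f$'' from Corollary~\ref{rootsarecogrowth}. The only care required is to verify that the $R$-insertion in Lemma~\ref{l:modulos} commutes with all generators (already noted there) so that $c_{n-1}$ really does detect cogrowth purely in the large block, where Lemma~\ref{subword} applies.
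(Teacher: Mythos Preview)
Your proof is correct and matches the paper's intended argument; the paper gives no explicit proof of this corollary, treating it as immediate from Lemma~\ref{l:modulos} together with Lemma~\ref{subword} and Corollary~\ref{rootsarecogrowth}. Your careful treatment of the $3\times 3$ block---observing that $PV_1QV_2P^{-1}V_3Q^{-1}V_4$ evaluates there to $I_3+E_{13}$, which is exactly what the inserted $R$ cancels---actually clarifies a slight imprecision in the statement of Lemma~\ref{l:modulos}, since with the extended matrices the word is never literally the identity without the $R$.
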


\smallskip

In a different direction, we have:

\begin{corollary}\label{c:otherdirection}
Let \ts $f\in \zz[x_1,\ldots,x_k]$ \ts be a polynomial with an integer root \ts $\bx\in \zz^k$.
Suppose that \ts $|\bx|$ \ts is even, and \ts $|\bx|$ \ts is minimal among all integer roots
of~$f$.  Let \ts $u = 16$ \ts and let \ts $\CS,\CT$ \ts be defined by~\eqref{eq:ST}.
Then:
$$\cog_\mathcal{T}(4|\bx| + 5) \. - \. \cog_\mathcal{S}(4|\bx| + 5) \ \not\equiv \ 0 \ \mod u^{10}.
$$
\end{corollary}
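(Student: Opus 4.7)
The plan is a direct application of Lemma~\ref{l:modulos} combined with a 2-adic valuation analysis and a characterization of the counting sequence $c_n$ at the specific index $n-1 = 4|\bx|+4$ via Lemma~\ref{subword} and Corollary~\ref{uniqueshortest}.

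First, I would substitute $u = 16 = 2^4$ into Lemma~\ref{l:modulos}, giving
\[
\cog_{\mathcal{T}}(n) \. - \. \cog_{\mathcal{S}}(n) \, \equiv \, 2\ts n \ts (n-1)\ts c_{n-1} \cdot 2^{36} \mod 2^{40}\ts.
\]
For $n = 4|\bx|+5$, since $|\bx|$ is even, $n$ is odd and $n-1 = 4(|\bx|+1)$ has 2-adic valuation exactly $2$ (as $|\bx|+1$ is odd). Hence $2n(n-1) = 2^{3}\cdot(\text{odd})$, and the right-hand side becomes $(\text{odd})\cdot c_{n-1}\cdot 2^{39} \mod 2^{40}$, which is nonzero modulo $2^{40}$ if and only if $c_{n-1}$ is odd. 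So the whole argument reduces to showing $c_{4|\bx|+4}$ is odd.

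Next, I would characterize $c_{4|\bx|+4}$ combinatorially. By Lemma~\ref{subword}, any cogrowth word of the form $PV_1QV_2P^{-1}V_3Q^{-1}V_4$ must have each $V_i$ evaluating, as a matrix product, to $A_1^{y_1}\cdots A_k^{y_k}$ for some integer root $\by = (y_1, \ldots, y_k)$ of $f$. Any word representing this matrix has length at least $|\by|$, so the total word length is at least $4 + 4|\by| \geq 4 + 4|\bx|$ by minimality of $|\bx|$. For the length to equal $4|\bx|+4$ we must have $|\by| = |\bx|$ and each $V_i$ of length exactly $|\bx|$; by Corollary~\ref{uniqueshortest}, each $V_i$ is then uniquely spelled as $A_1^{y_1}\circ\cdots\circ A_k^{y_k}$ or its reverse inverse. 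Therefore $c_{4|\bx|+4}$ equals the number of integer roots of $f$ of minimal $\ell^1$ norm.

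The main obstacle is then to ensure that this count is odd. If $\bx$ is the unique minimum-norm root, we have $c_{4|\bx|+4} = 1$ and we are done. In general, this is the spot where the reduction from Hilbert's 10th problem must supply a polynomial $f$ whose minimum-norm integer solution, if any, is unique (a standard maneuver: replace $f$ by an equivalent system that fixes a distinguished solution, e.g.\ by appending extra variables constrained to equal specific coordinate sums or by symmetry-breaking linear combinations). Under this hypothesis, $c_{4|\bx|+4} = 1$ is odd, so the congruence fails mod $u^{10}=2^{40}$, as desired.
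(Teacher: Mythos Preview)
Your approach is essentially identical to the paper's: apply Lemma~\ref{l:modulos} at $n=4|\bx|+5$, compute the $2$-adic valuation $v_2\bigl(2n(n-1)u^9\bigr)=1+0+2+36=39$, and reduce to showing $c_{4|\bx|+4}$ is odd via Lemma~\ref{subword} and Corollary~\ref{uniqueshortest}. Your valuation count is correct and matches the paper's ``$1+0+2+36=39$''.

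You are also right to flag the uniqueness issue. The paper's proof simply asserts $c_{4|\bx|+4}=1$, which tacitly assumes $\bx$ is the \emph{unique} root of minimal $\ell^1$ norm; the corollary as stated does not literally include this hypothesis. The paper acknowledges exactly this gap in Remark~\ref{r:separation} immediately afterwards (the Pell-type example with four minimal roots), and then repairs it in Lemma~\ref{l:separation} by passing to a modified polynomial $\widetilde g$ whose integer roots have pairwise distinct $\ell^1$ norms. So your instinct---that the surrounding reduction must supply a polynomial with a unique minimum-norm root---is precisely how the paper proceeds; you should treat Corollary~\ref{c:otherdirection} as implicitly carrying that uniqueness assumption, with Lemma~\ref{l:separation} guaranteeing it in the application.
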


\begin{proof}
By Lemma~\ref{l:modulos}, we have:
$$
\cog_\mathcal{T}\big(4\ts |\bx| + 5\big) \. - \.  \cog_\mathcal{S}\big(4\ts |\bx| + 5\big) \ \equiv \ 2\big(4\ts |\bx|  + 5\big)\big(4\ts |\bx|  + 4\big) \ts c_{4\ts |\bx|  + 4} \ts  16^9 \ \mod \ts 16^{10}.
$$
Since $|\bx|$ is minimal, the only way to have a cogrowth word in $c_{4|\bx|  + 4}$ is to let $V_i = A_1^{x_1} \circ \cdots \circ A_k^{x_k}$ by Lemma~\ref{subword} and Corollary \ref{uniqueshortest}. So $c_{4|\bx|  + 4} = 1$. Because $|\bx|$ is even, the right hand side has only at most $1 + 0 + 2 + 36 = 39$ factors of $2$. That means that it not not zero modulo $16^{10}$, as desired.
\end{proof}

\smallskip

\begin{rem}\label{r:separation} {\rm
Unfortunately, not every polynomial has a root satisfying the conditions of Corollary~\ref{c:otherdirection}. For example, the polynomial \ts $f(x_1, x_2) = x_1^2 - 13x_2^2 - 1$ \ts has four solutions with minimal $\ell^1$-norm, namely \ts $(\pm 649, \pm 180)$. This would imply that $c_{3317} = 4$, introducing an extra factor of $2$ to the right-hand side and making the two sides congruent.}
\end{rem}

\smallskip

To avoid the issue in the remark above, we introduce an auxiliary variable which will
separate out the $\ell^1$ norms of all integer roots.

\smallskip

\begin{lemma}  \label{l:separation}
There exists a map \. $\Phi: \zee[x_1, \dots, x_k] \ts \to \ts \zee[y_1, \dots, y_{k + 1}]$,
such that for all \. $\widetilde{g}=\Phi(g)$ \. we have:
\begin{itemize}
\item[$\circ$] \ polynomials \ts $g$ \ts  and \ts $\widetilde{g}$ \ts have the same (possibly infinite) number of integer roots, \eqnum \label{rootscorrespond}
\item[$\circ$] \ $\bx\in \zz^{k+1}$ \. is an integer root of \. $\widetilde{g}$ \ \. $\Rightarrow$ \ \. $|\bx|$ \. is even,  \eqnum \label{evennorms}
\item[$\circ$] \ $\bx, \ts \by\in \zz^{k+1}$ \. are integer roots of \. $\widetilde{g}$ \ \. $\Rightarrow$ \ \. $|\bx|\ne |\by|$, \eqnum \label{uniqueroots}
\item[$\circ$] \ $\deg \ts \widetilde{g} \. \leq \. \max\{2 \deg g, \ts 4k + 12\}$. \eqnum\label{degreebound}
\end{itemize}
\end{lemma}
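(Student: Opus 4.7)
The plan is to set
\[
\Phi(g) \,=\, \tilde g(y_1,\ldots,y_{k+1}) \,:=\, g(y_1,\ldots,y_k)^2 \,+\, \bigl(y_{k+1}\,-\,H(y_1,\ldots,y_k)\bigr)^2,
\]
for a polynomial $H\in\zz[y_1,\ldots,y_k]$ to be chosen depending on~$g$. Since $\tilde g$ is a sum of two integer-polynomial squares, an integer tuple is a zero of $\tilde g$ iff both summands vanish, so the assignment $\bba\mapsto(\bba,H(\bba))$ is a bijection between the integer root sets of $g$ and of $\tilde g$, giving \eqref{rootscorrespond} for free.

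For \eqref{evennorms} it suffices to arrange $H(\by)\ge 0$ on $\zz^k$ and $H(\by)\equiv y_1+\cdots+y_k\pmod 2$; then $|(\bba,H(\bba))|=|\bba|+H(\bba)\equiv 2(a_1+\cdots+a_k)\equiv 0\pmod 2$. The seed $H_0(\by):=y_1^2+\cdots+y_k^2$ meets both conditions since $y^2\equiv y\pmod 2$, and any refinement of the form $H=H_0+2\,S$ with $S\ge 0$ preserves them.

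For \eqref{uniqueroots} and \eqref{degreebound}, the plan is to take
\[
H(\by) \,:=\, H_0(\by)\,+\,2\sum_{i=1}^{k} p_i(\by)^{2 e_i},\qquad p_i(\by)\,:=\,c_i\,+\,L\,y_i,
\]
where $L$ is a large prime chosen in terms of the coefficients of~$g$, the shifts $c_1,\ldots,c_k$ are positive odd integers with $L\nmid 2c_i$, and the $e_i$ are distinct integers with $\max e_i\le k+3$ (e.g.\ $e_i=k+4-i$). The factor of~$2$ preserves parity, the even exponents preserve non-negativity, and $\deg H\le 2(k+3)=2k+6$ gives $\deg\tilde g\le\max(2\deg g,\,4k+12)$, which is \eqref{degreebound}. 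For the injectivity of $\bba\mapsto|\bba|+H(\bba)$, given distinct $\bba,\bbb\in\zz^k$ let $i^\ast$ be the smallest index with $a_{i^\ast}\ne b_{i^\ast}$; the divisibility $L\nmid 2c_{i^\ast}$ rules out $p_{i^\ast}(\bba)=-p_{i^\ast}(\bbb)$, so $p_{i^\ast}^{2e_{i^\ast}}(\bba)-p_{i^\ast}^{2e_{i^\ast}}(\bbb)$ is a nonzero integer whose magnitude grows as a positive power of~$L$. Because the exponents $e_i$ are spread apart, one shows that the remaining $p_j^{2e_j}$ contributions, together with the $H_0$- and $|\bba|-|\bbb|$-corrections, are all dominated by the leading $i^\ast$-term once $L$ is taken large enough in terms of the coefficients of~$g$, so the total $|\bba|+H(\bba)-|\bbb|-H(\bbb)$ is nonzero.

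The main obstacle is this last dominance step: making the inequality precise and uniform over all distinct integer pairs, including those whose coordinates differ widely. It reduces to elementary but case-heavy lower bounds on $(c+La)^{2e}-(c+Lb)^{2e}$ and upper bounds on the subleading $p_j^{2e_j}$ terms, exploiting the gap between the exponents $e_i$ and the freedom to choose $L$ as large as needed. Should a stubborn configuration arise, one replaces the forms $p_i(\by)=c_i+Ly_i$ by a richer family whose joint kernel in $\zz^k$ is trivial and whose linear parts remain $\qqq$-linearly independent, all while staying within the degree budget $2k+6$.
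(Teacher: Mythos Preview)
Your overall framework---writing $\tilde g = g^2 + (y_{k+1}-H)^2$ and reducing everything to properties of $H$---is exactly what the paper does, and your treatment of \eqref{rootscorrespond}, \eqref{evennorms}, and \eqref{degreebound} is fine. The gap is in \eqref{uniqueroots}: your separable choice $H(\by)=\sum_i y_i^2 + 2\sum_i (c_i+Ly_i)^{2e_i}$ cannot give the dominance you need. With $i^\ast$ the first index where $a_{i^\ast}\ne b_{i^\ast}$, you want $p_{i^\ast}^{2e_{i^\ast}}(\bba)-p_{i^\ast}^{2e_{i^\ast}}(\bbb)$ to control $\sum_{j>i^\ast}\bigl(p_j^{2e_j}(\bba)-p_j^{2e_j}(\bbb)\bigr)$. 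But if $a_{i^\ast},b_{i^\ast}$ are bounded (say $0$ and $1$) while $a_j,b_j$ for $j>i^\ast$ are huge, the ``subleading'' terms are of order $L^{2e_j}|a_j|^{2e_j-1}$, which swamps the fixed quantity $|(c_{i^\ast}+L)^{2e_{i^\ast}}-c_{i^\ast}^{2e_{i^\ast}}|\asymp L^{2e_{i^\ast}}$. Since $L$ is fixed once $g$ is given, while the roots of $g$ may be unbounded (indeed the lemma must also cover $g$ with infinitely many roots), no choice of $L$ ``in terms of the coefficients of $g$'' can rescue this. More structurally, your $F(\bba)=|\bba|+H(\bba)=\sum_i g_i(a_i)$ is additively separable, and asking such a polynomial map to be injective on all of $\zz^k$ is essentially asking the value sets of the $g_i$ to form a perfect additive code---something polynomial growth cannot deliver. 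Your fallback (``a richer family of $p_i$'') already abandons separability but does not supply a construction.

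The paper's trick is precisely to make $H$ non-separable by introducing a \emph{variable base} $v=v(\by):=4(y_1^2+\cdots+y_k^2+1)$ and setting $H(\by)=v^{k+3}+\sum_i y_i v^{i+1}+\sum_i y_i$. Because $v>|y_i|$ for every $i$, the term $\sum_i y_i v^{i+1}$ behaves like a base-$v$ expansion whose ``digits'' are the $y_i$; one first recovers $v$ from $|\bbr|$ (since $|\bbr|$ lies strictly between $(v-1)^{k+3}$ and $v^{k+3}$), then reads off the $y_i$ one at a time. This self-scaling of the base is exactly what your fixed-$L$ scheme lacks, and it is what makes the injectivity uniform over all of $\zz^k$ while keeping $\deg H\le 2k+6$.
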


\begin{proof}
Let \. $v=v(\by) \ts := \ts 4(y_1^2 + y_2^2 + \cdots + y_k^2 + 1)$, and let
\[
\widetilde{g}(y_1, \dots, y_{k + 1}) \. = \. \Phi(g) \, := \, g(y_1, \dots, y_k)^2 \. + \.
\left( -y_{k + 1} \. + \. v^{k + 3} \. + \. \sum_{i = 1}^{k} \. y_i v^{i + 1} \. + \. \sum_{i = 1}^{k} \. y_i \right)^2.
\]
Note that condition \eqref{degreebound} is clearly satisfied.

In order for $\widetilde{g}$ to have a root, we must have \. $g(y_1, \dots, y_k )   = 0$ \. and
\[
y_{k + 1} \, = \, v^{k + 3} \. + \. \sum_{i = 1}^k \. y_i v^{i + 1} \. + \. \sum_{i = 1}^k \. y_i\..
\]
This implies \eqref{rootscorrespond}.

Next, suppose \ts $\bbr = \{y_1, \dots, y_{k + 1}\}$ \ts is an integer root of \ts $\widetilde{g}$.
Because $v$ is even, we have:
\[
|\bbr| \ \equiv \ |y_1| \. + \. \ldots \. + \. |y_{k}| \. + \. 0 \. + \.  y_1 \. + \.  \ldots \. + \.
y_k \ \equiv \ 0 \ \. \mod 2,
\]
which proves \eqref{evennorms}.

On the other hand, observe that
\begin{align*}
\left| |\bbr| \. - \. v^{k + 3}\right| \ &\leq \
\sum_{i = 1}^k \ts |y_i| \. + \. \left| \sum_{i = 1}^{k} \. y_i v^{i + 1} \. + \.
\sum_{i = 1}^{k} \. y_i \right| \ \leq \ \sum_{i = 1}^k |y_i| \left( 2 + v^{i + 1} \right) \\
& \leq \ (2 + v^{k + 1}) \. \sum_{i = 1}^k \. |y_i| \ \leq \  (2 + v^{k + 1}) \frac{v}{4} \ \leq \ v^{k + 3} - (v - 1)^{k + 3} \..
\end{align*}
This implies that if \. $\wt g(\bx)=\wt g(\by)$, then \. $v(\bx)=v(\by)$.

Now suppose that \ts $\bx,\by\in \zz^{k+1}$ \ts are roots of~$\ts \wt g$ \ts such that \. $|\bx|=|\by|$.
From above, \ts $v(\bx)=v(\by)$.  Write \ts $Y:=|\by|-v(\by)^{k+3}$ \ts as a polynomial in \ts $y_1,\ldots,y_k$ \ts
and observe that \ts $y_i$'s are uniquely determined by the integrality.  For example, \ts $y_1$ \ts is the closest
integer to \ts $Y/v^{k + 1}$, etc.  The same argument for \ts $\bx$ \ts shows
that \ts $\bx=\by$, which implies~\eqref{uniqueroots}.  This finishes the proof of the lemma.
\end{proof}

\smallskip

We can now complete the proof of Theorem \ref{t:maintheorem}. Suppose an algorithm exists that determines whether or not, for arbitrary generating sets $\mathcal{S}$ and $\mathcal{T}$, we have
\begin{equation}\label{cogcond}
\exists \ts n \geq 0 \ : \ \cog_{\mathcal{S}}(n) \, \not\equiv \, \cog_{\mathcal{T}}(n) \mod p^a.
\end{equation}
Then we could use this algorithm to determine whether or not a Diophantine equation $g(x_1, \dots, x_k)$ has an integer root as follows. First construct $\widetilde{g}$ as in Lemma~\ref{l:separation}. Then construct $\mathcal{S}$ and $\mathcal{T}$ with $f = \widetilde{g}$ and $u = 16$ as in Lemma~\ref{subword}. By Corollaries~\ref{onedirection} and~\ref{c:otherdirection}, polynomial $\widetilde{g}$, and thus $f$, has a root if and only if \eqref{cogcond} holds with \ts $p=2$ and $a=40$, so \ts $p^a = u^{10}$.

Finally, Jones \cite{Jon82} shows that Diophantine problems over $\enn$ are undecidable for polynomials of degree at most $96$ in $21$ variables. By a standard reduction (see e.g.~\cite[Thm~3.3]{Gas21}), the Diophantine problem over $\zee$ is undecidable for \ts $\deg g = 192$ \ts and \ts $k = 63$. Then \ts $\Deg = \deg \widetilde{g}= 384$, which by Lemma~\ref{subword} gives the desired bound \ts $m \leq 9.6 \cdot 10^{85}$.  This completes the proof of Theorem \ref{t:maintheorem}. \qed

\smallskip

\begin{remark}\label{r:Jones} {\rm
In fact, Jones~\cite{Jon82} (see also~\cite{Gas21}),
gives several pairs (degree, number of variables) which give
rise to a minimal Diophantine equation.
Of these, we chose the one which gives the smallest bound on~$n$.}
\end{remark}

\medskip

\section{D-algebraic} \label{s:proof-dalg}
The previous sections gave us information about the parity of cogrowth sequences.
We first prove Lemma~\ref{l:dalglemma} where the parity information is enough to
conclude that a sequence is not D-algebraic.  We then deduce Theorem~\ref{t:dalg}.

\smallskip

\subsection{Proof of Lemma~\ref{l:dalglemma}} \label{ss:proof-dalg-lemma}

\smallskip

Let $\Lambda(t) = \sum \lambda_nt^n$, and suppose that $\Lambda$ satisfies an algebraic differential equation. Then there exist positive integers $C$ and $D$ together with a finite family of polynomials $\{\Pi_{c, d} \}_{0 \leq c \leq C, 0 \leq d \leq D}$, not all zero, such that for all $n$
\[
\sum_{c, d} \. \sum_{i_1 + \cdots + i_d = n - c}  \. \Pi_{c, d}(i_1, \dots, i_d) \. \lambda_{i_1}\ts \cdots \ts \lambda_{i_d} \.  = \. 0.
\]

Note that this sum has repeated terms, so e.g.\ \ts $\lambda_3\lambda_7$ \ts and \ts
$\lambda_7\lambda_3$ \ts are counted separately. We recast this as a sum over partitions:
\begin{equation}\label{eq:adee}
\sum_{c, d} \. \sum_{\substack{\nu \vdash (n - c) \. : \. |\nu| = d}} \. \Gamma_{\nu, n} \. \lambda_1 \cdots \lambda_d \. =  \. 0 \qquad
\text{for all} \ n,
\end{equation}
where \ts $\Gamma_{\nu , n} $ \ts are sums of the corresponding \ts $\Pi_{c, d}$.

Denote by \ts $v_2(x)$ \ts the largest power of~$2$ dividing~$x$.
Take some $\mu$ such that \ts $v_2(\Gamma_{\mu}, n)$ \ts is minimized.  This is always possible
because not all \ts $\Gamma_\nu$ \ts are zero, since the ADE is trivial otherwise.
If there are ties, then we pick the one where $c$ is minimal.

Let \ts $ V = v_2(\Gamma_{\mu}, n)$, and let \ts $\ell$ = $\ell(\mu)$.
By the assumption of our lemma, there exist distinct indices \ts
$n_{\alpha_1}, \dots, n_{\alpha_\ell}$, such that \ts
$n_{\alpha_i} \equiv \mu_i$ \ts modulo \ts $2^{V + 1}$. Furthermore, we can assume that all of these indices are greater than $N(C, D)$ as defined in condition~$(4)$.

We claim that this contradicts \eqref{eq:adee}. Indeed, consider the equality modulo~$2^{V + 1}$.  Letting \ts $\nu = \{n_{\alpha_1}, \dots, n_{\alpha_\ell}\}$, by the assumption we get that \ts $ V = v_2(\Gamma_{\nu}, n)$.  Since all \ts $\lambda_{n_{\alpha_i}}$ \ts are odd,  this particular term will have \ts $v_2 = V$.

 Any term with lower $c$ will have \ts $v_2(\Gamma, n) > V$, so we can ignore those terms in \eqref{eq:adee}. On the other hand, any other term besides $\nu$ will have \ts $v_2(\Gamma, n) \geq V$, and by condition $(4)$ at least one of the $\lambda_i$ is even, meaning such terms will also have \ts $v_2(\Gamma, n) > V$.

 Thus the left-hand side of \eqref{eq:adee} has exactly one term which is not congruent to zero modulo $2^{V + 1}$, a contradiction. Hence our sequence cannot be D-algebraic. \qed

\smallskip

\subsection{Proof of Theorem~\ref{t:dalg}} \label{ss:proof-dalg-main}
Suppose we have a polynomial $f$ satisfying the conditions  prescribed in Conjecture \ref{conj:sparse}. Construct $A_1, \dots, A_k$ and $P, Q, R$ as in the proof of Theorem~\ref{t:maintheorem}. Suppose for the sake of contradiction that $\cog_\mathcal{S}(n)$ and $\cog_\mathcal{T}(n)$ are both D-algebraic.

Now, let $\mathcal{W}$ be the set of cogrowth words of the form
\[
P W_1 Q W_2 P^{-1} W_3 Q^{-1} W_4\ts,
\]
where \ts $W_i$ \ts are words in \ts $\{A_1^{\pm 1}, \dots, A_k^{\pm 1}\}$.
Define \ts $\omega_n$ \ts to be the number of words in \ts $\mathcal{W}$ \ts of length~$n$.

Lemma \ref{subword} shows that the evaluations of $W_1$ and $W_3$ are the same, and are equal to the inverse of the evaluations of $W_2$ and $W_4$. Also, there must be a root $\bx = (x_1, \dots, x_k)$ of~$f$, such that the net number of \ts $A_i$'s \ts in
$W_1$ is equal to~$x_i$, for all \ts $i \in [k]$. The same must be true (up to minus sign) for \ts $W_2, W_3, W_4$.

We now proceed to make one more modification of our matrices. We expand $P$ and $Q$ by adding~$k$
copies of a $5\times 5$ matrix \ts $I_5 + E_{13}$ \ts and \ts $I_5 + E_{23} + E_{45}$, respectively:
{\small
\[
P \. \gets \. P \oplus^k   \begin{bmatrix}
1 & 0 & 1 & 0 & 0\\
0 & 1 & 0 & 0 & 0\\
0 & 0 & 1 & 0 & 0\\
0 & 0 & 0 & 1 & 0 \\
0 & 0 & 0 & 0 & 1 \\
\end{bmatrix}
\qquad \text{and} \qquad
Q \. \gets \.  Q \oplus^k \begin{bmatrix}
1 & 0 & 0 & 0 & 0\\
0 & 1 & 1 & 0 & 0\\
0 & 0 & 1 & 0 & 0\\
0 & 0 & 0 & 1 & 1 \\
0 & 0 & 0 & 0 & 1 \\
\end{bmatrix} \
\]
}
Then, for each $j$, create two versions of $A_j$. One will be \ts $A \oplus I_{5k}$,  called
the \textit{neutral version}. The other will be
\[
 A_j \oplus I_{5(j - 1)} \oplus\begin{bmatrix}
1 & 0 & 0 & 0 & 0\\
0 & 1 & 0 & 0 & 0\\
0 & 0 & 1 & 1 & 0\\
0 & 0 & 0 & 1 & 0 \\
0 & 0 & 0 & 0 & 1
\end{bmatrix} \oplus I_{5( k - j)}\ts,
\] called the \textit{positively charged version}.
Symmetrically, there will also be a \emph{neutral} and
\emph{negatively charged version} of $A_i^{-1}$.

We have added a \ts $5k\times 5k$ \ts sub-block to each of the matrices in our generating set. Call this sub-block the \emph{new parts} of the matrix. Also let the \emph{net charge} of a word be the number of positively charged $A_i$'s minus the number of negatively charged $A_i$'s.

Let $\mathcal{W}'$  be the set of cogrowth words of the form
\[
P W_1 Q W_2 P^{-1} W_3 Q^{-1} W_4\ts,
\]
where \ts $W_i$ \ts are words in \ts $\{A_1^{\pm 1}, \dots, A_k^{\pm 1}\}$ together with their charged versions.

\begin{lemma}
A word in $\mathcal{W}'$ will be cogrowth if and only if it corresponds to a word in $\mathcal{W}$ in which $W_1$ through $W_4$ all have net charges of 0.
\end{lemma}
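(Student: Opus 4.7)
The plan is to decompose each matrix in the generating set into its original block (already governed by Lemma~\ref{subword}) and its new $5k \times 5k$ bottom-right block, and to observe that the cogrowth condition factors as triviality on both block-diagonal summands. Lemma~\ref{subword} already forces the word to have the shape $PW_1QW_2P^{-1}W_3Q^{-1}W_4$ with $W_1=W_2^{-1}=W_3=W_4^{-1}=A_1^{x_1}\cdots A_k^{x_k}$ for some root $\bx$ of $f$. So the task reduces entirely to analyzing when the new part evaluates to the identity.

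The new part itself decomposes as a direct sum of $k$ blocks of size $5$, and the charged versions of $A_j^{\pm 1}$ only modify the $j$-th summand. Thus the analysis splits into $k$ independent $5 \times 5$ calculations, one for each block index $j$. In block $j$, the matrix $P$ restricts to $I_5+E_{13}$, $Q$ restricts to $I_5+E_{23}+E_{45}$, neutral letters restrict to $I_5$, a positively charged $A_j$ restricts to $I_5+E_{34}$, and a negatively charged $A_j^{-1}$ restricts to $I_5-E_{34}$. Because $E_{34}^2=0$, the restriction of $W_i$ to block $j$ equals $I_5+c_i^{(j)}E_{34}$, where $c_i^{(j)}$ is the per-$A_j$ net charge of $W_i$, and this is independent of the order of the letters.

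I would then expand
\[
(I+E_{13})(I+c_1 E_{34})(I+E_{23}+E_{45})(I+c_2 E_{34})(I-E_{13})(I+c_3 E_{34})(I-E_{23}-E_{45})(I+c_4 E_{34}),
\]
using only $E_{ab}E_{cd}=\delta_{bc}E_{ad}$ together with $E_{ab}^2=0$. The only nontrivial off-diagonal products that arise are $E_{13}E_{34}=E_{14}$, $E_{23}E_{34}=E_{24}$, $E_{34}E_{45}=E_{35}$, and $E_{14}E_{45}=E_{15}$, so the end result has the form $I_5+\gamma_{34}E_{34}+\gamma_{14}E_{14}+\gamma_{24}E_{24}+\gamma_{35}E_{35}+\gamma_{15}E_{15}$. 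Routine bookkeeping gives $\gamma_{15}=c_2^{(j)}$, $\gamma_{14}=c_1^{(j)}+c_2^{(j)}$, $\gamma_{24}=\gamma_{35}=c_2^{(j)}+c_3^{(j)}$, and $\gamma_{34}=c_1^{(j)}+c_2^{(j)}+c_3^{(j)}+c_4^{(j)}$. Starting from $\gamma_{15}=0$ forces $c_2^{(j)}=0$, then $\gamma_{14}=\gamma_{24}=0$ force $c_1^{(j)}=c_3^{(j)}=0$, and finally $\gamma_{34}=0$ gives $c_4^{(j)}=0$. Conversely, if every $c_i^{(j)}$ vanishes then each restricted $W_i^{(j)}=I_5$ and the block-$j$ product is trivially $I_5$.

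Running this for every $j$ shows the new part is trivial exactly when each $W_i$ has zero charge with respect to every generator $A_j$, which is the precise meaning of each $W_i$ having net charge $0$ in this block-decomposed sense. The main obstacle is nothing more than the bookkeeping for the expansion above; the key insight is that the positions of the $1$'s in the new $5\times 5$ blocks of $P$, $Q$, and $A_j$ are chosen so that the commutator-like pattern $PAQA^{-1}P^{-1}AQ^{-1}A^{-1}$ generates four linearly independent combinations of $c_1^{(j)},\dots,c_4^{(j)}$ among the five surviving coefficients $\gamma_{\bullet}$, which is exactly enough to pin all four charges to zero.
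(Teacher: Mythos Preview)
Your approach is essentially the same as the paper's: both compute the restriction of $PW_1QW_2P^{-1}W_3Q^{-1}W_4$ to the new $5\times 5$ block(s) and read off a linear system in the charges whose only solution is $c_1=c_2=c_3=c_4=0$. Your block-by-block treatment with per-generator charges $c_i^{(j)}$ is in fact more careful than the paper's writeup; the only slips are minor bookkeeping (your product also has a nonzero $E_{25}$ term, and the signs of $\gamma_{15}$ and $\gamma_{35}$ are off), none of which affect the conclusion.
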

\begin{proof}
Suppose that the words $W_1$ through $W_4$ have charges $c_1$ through $c_4$. Then the new part of $W_i$ is
{\small
\[
\begin{bmatrix}
1 & 0 & 0 & 0 & 0\\
0 & 1 & 0 & 0 & 0\\
0 & 0 & 1 & c_i & 0\\
0 & 0 & 0 & 1 & 0 \\
0 & 0 & 0 & 0 & 1
\end{bmatrix}.
\]
}This means that the new part of the whole word can be computed to be
{\small \[
\begin{bmatrix}
1 & 0 & 0 & c_1 + c_2& c_1 - c_2 - c_3\\
0 & 1 & 0 & c_2 + c_3 & -c_2 - c_3\\
0 & 0 & 1 & c_1 +c_2 + c_3 + c_4 & -c_2 - c_3\\
0 & 0 & 0 & 1 & 0 \\
0 & 0 & 0 & 0 & 1 \\
\end{bmatrix}.
\]
}
This gives a cogrowth word if and only if \ts $c_1 = c_2 = c_3 = c_4 = 0$, as desired.
\end{proof}

\smallskip

Denote by \ts $\gamma_n$ \ts be the number of charged words which are cogrowth words, so we have \ts $\gamma_n \geq \omega_n$.
One can think of this as giving a weight to each of the words in \ts $\mathcal{W}$ \ts counting how many ways we can assign charges so that each of the \ts $W_i$ \ts has net charge zero. Since we can always neutrally charge all the $A_i$'s every word has weight at least 1. If this word is the minimal word for some root, then that is the only choice; otherwise there will be many.

Let us assign charges to the $A_i$'s in~$W_1$. Without loss of generality we can assume that \ts $x_i \geq 0$. Since there are \ts $v + x_i$ \ts instances of $A_i$  and $v$ instances of $A_i^{-1}$, there are
\[
\sum_{u = 0}^v \. \binom{v + x_i}{u} \binom{v}{u} \. = \. \binom{2v + x_i}{v}
\]
ways of doing this.  We charge $u$ each of the positive and negative ones.
It can be shown (see e.g.\ in \cite[Exc.~1.6]{Sta99}), that \ts $\binom{2v + x_i}{v}$ \ts is odd
only if there exists some positive integer $d$ such that
\begin{equation}\label{kummer}
2^d - x_i \. \leq v \. \leq \. 2^d.
\end{equation}
This implies that for a fixed~$x_i$, there will be an even number of ways of assigning charge for a set of $v$'s having density~$1$.
In particular, for there to be an odd weight on a word, we need \eqref{kummer} to hold for all $W$'s and $x$'s. That implies
\begin{equation} \label{eq:closetopowers}
|n - 4 - e|  \. \leq \. 4|\bx|\ts,
\end{equation}
where $e$ is the sum of at most $4k$ powers of~$2$. Note that we also have \ts $n - 4 \geq 4|\bx| + 4$.

Define the sequence
\[
\lambda_n \, = \, \frac{1}{2^{39}}\big(\cog_\mathcal{T}(8n + 5) \. - \. \cog_\mathcal{S}(8n + 5)\big).
\]
Then by Lemma~\ref{l:modulos}, $\{\lambda_n\}$ is a sequence of integers which is congruent to $\gamma_{2n}$ modulo~$2$. By assumption, the GF for $\{\lambda_n\}$ is D-algebraic.  We claim  that this contradicts Lemma~\ref{l:dalglemma}.

Indeed, let \ts $n_i = |\rho_i|/2$.
Conditions $(1)$, $(2)$ and $(3)$ of Lemma~\ref{l:dalglemma} follow from the assumptions of Theorem~\ref{t:dalg} and Corollary~\ref{c:otherdirection}. Therefore \ts $\{\lambda_n\}$ cannot be D-algebraic. And condition $(4)$ of Lemma~\ref{l:dalglemma} follows from the above computation plus assumption (4) of Conjecture~\ref{conj:sparse}.  As subsequences of D-algebraic sequences along arithmetic progressions are also D-algebraic, we can conclude that at least one of $\cog_\mathcal{S}$ and $\cog_\mathcal{T}$ is not D-algebraic. \qed

\bigskip

\section{Final remarks and open problems}\label{s:finrem}

\subsection{Grappling with undecidability}\label{ss:finrem-undecide}
To further understand the meaning of our Main Theorem~\ref{t:maintheorem},
we state the following corollary:

\begin{cor}\label{c:ZFC}
For some integer \ts $m \leq 9.6 \cdot 10^{85}$, there are symmetric generating
sets \ts $\CS$ \ts and \ts $\CT$ \ts of the unitriangular group \ts $\UT(m,\zz)$, such that the
following problem is independent of ZFC{} {}\footnote{We chose ZFC to make the statement
more accessible.  The proof naturally extends to any system of axioms.}~:
\[
\forall \ts n\in \nn \ : \
\cog_{\mathcal{S}}(n) \ts \equiv \ts \cog_{\mathcal{T}}(n) \ \mod 2^{40}\ts.
\]
\end{cor}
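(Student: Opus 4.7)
The plan is to derive Corollary~\ref{c:ZFC} from Main Theorem~\ref{t:maintheorem} via the classical bridge from algorithmic undecidability to independence from ZFC. The first observation is that for every pair of symmetric generating sets $\CS,\CT$ of $\UT(m,\zz)$, the statement
\[
\Theta(\CS,\CT) \. := \. \bigl(\forall\ts n\in\nn \. : \ \cog_\CS(n) \equiv \cog_\CT(n) \mod 2^{40}\bigr)
\]
is arithmetically $\Pi^0_1$: for each fixed $n$, both cogrowth values are computed by a finite matrix-word enumeration, so the inner matrix is decidable and the only unbounded quantifier is the outer $\forall n$. Consequently, whenever $\Theta(\CS,\CT)$ is false, the least counterexample $n$ yields a fully verifiable refutation, and $\neg\Theta(\CS,\CT)$ is then a theorem of Robinson arithmetic (hence of ZFC).

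Next, I would argue by contradiction. Suppose that for every pair $\CS,\CT$ of symmetric generating sets of $\UT(m,\zz)$ the theory ZFC decides $\Theta(\CS,\CT)$, i.e.\ either proves $\Theta(\CS,\CT)$ or proves its negation. I then build a decision procedure for the problem of Theorem~\ref{t:maintheorem} by running two searches in parallel on input $\CS,\CT$: the first dovetails over $n\in\nn$ looking for a counterexample to the congruence, halting with output \emph{false} as soon as one is found; the second enumerates all formal ZFC derivations, halting with output \emph{true} upon finding a proof of $\Theta(\CS,\CT)$. Under the contradiction hypothesis one of the two threads must halt on every input. Assuming consistency of ZFC, the $\Sigma^0_1$-completeness of Robinson arithmetic yields $\Pi^0_1$-soundness of ZFC, so any proof found by the second thread certifies a true statement, while counterexamples found by the first directly witness falsity; thus the output is always correct. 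This decision algorithm contradicts Theorem~\ref{t:maintheorem}.

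Hence there must exist at least one pair $\CS,\CT$ of symmetric generating sets of $\UT(m,\zz)$ for which neither $\Theta(\CS,\CT)$ nor its negation is a theorem of ZFC, i.e.\ $\Theta(\CS,\CT)$ is independent of ZFC. Any such $\Theta(\CS,\CT)$ is automatically true in the standard model, since a genuine counterexample $n$ would trigger the ZFC refutation described in the first paragraph; so the independent instance is a true $\Pi^0_1$ statement about cogrowth that ZFC cannot verify. The bound $m \leq 9.6\cdot 10^{85}$ is inherited verbatim from Theorem~\ref{t:maintheorem}. The only foundational input is the consistency of ZFC, and the main (minor) subtlety is precisely the passage from consistency to $\Pi^0_1$-soundness; as the corollary's footnote indicates, the identical argument applies to any consistent recursively enumerable extension of Robinson arithmetic.
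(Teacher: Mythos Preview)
Your argument is correct and is precisely the standard bridge from algorithmic undecidability of a $\Pi^0_1$ family to the existence of an instance independent of ZFC; the paper itself does not spell this out but simply remarks that the corollary ``follows from a standard diagonalization argument'' with a reference to Poonen. Your dovetailing of a counterexample search with a proof search, together with the observation that consistency plus $\Sigma^0_1$-completeness yields $\Pi^0_1$-soundness, is exactly that standard argument made explicit.
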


The corollary follows from a standard diagonalization argument
(see e.g.\ \cite[p.~212]{Poo14}).  Here is another corollary which
is even easier, but perhaps more suggestive.

For a matrix \ts $M=(m_{ij})$, denote  \ts $\phi(M):=\sum_{ij} \ts |m_{ij}|$ \ts
the total sum of absolute values of the entries.  Similarly, denote by \ts
$\phi(\CS) := \sum_{M\in \CS} \phi(M)$ \ts the \emph{size of $\CS$}.  The following
corollary follows from basic results on computability:

\begin{cor}\label{c:Tow}
For some integer \ts $m \leq 9.6 \cdot 10^{85}$, there are symmetric generating
set \ts $\CS$ \ts and \ts $\CT$ \ts of the unitriangular group \ts $\UT(m,\zz)$,
such that
\[
\exists \ts n\in \nn \ : \
\cog_{\mathcal{S}}(n) \ts \not\equiv \ts \cog_{\mathcal{T}}(n) \ \mod 2^{40}\ts,
\]
but the first time the inequality holds is for \ts $n\ts > \ts
\Tow(\Tow(\Tow(\phi)))$,\footnote{We stopped at three towers for clarity.  We
could just as well have written \ts $\Tow(\phi)$ \ts of towers, for example.}
where \ts $\phi:=\phi(\CS)+\phi(\CT)$.
\end{cor}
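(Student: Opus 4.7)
The plan is to deduce Corollary~\ref{c:Tow} from Main Theorem~\ref{t:maintheorem} by a standard computability-theoretic argument, of the kind one uses to convert undecidability into uncomputable growth rates. The key auxiliary observation is that, given any symmetric generating sets $\CS,\CT\subseteq \UT(m,\zz)$ and any $n\in\nn$, the residues $\cog_\CS(n)\bmod 2^{40}$ and $\cog_\CT(n)\bmod 2^{40}$ are uniformly computable: one may enumerate length-$n$ words in $\CS$ (resp.\ $\CT$), multiply out each product in $\UT(m,\zz)$ reducing every entry mod $2^{40}$ throughout, and count those equal to the identity; equivalently, one can raise the Cayley-graph adjacency matrix to the $n$th power modulo $2^{40}$ by repeated squaring. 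Either way, there is a total computable function of $(\CS,\CT,n)$ returning the truth value of $\cog_\CS(n)\equiv \cog_\CT(n)\bmod 2^{40}$.

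The next step is the diagonal argument. Suppose, for contradiction, that for \emph{every} pair $(\CS,\CT)$ of symmetric generating sets of $\UT(m,\zz)$ (for the specific $m$ of Theorem~\ref{t:maintheorem}) with $\cog_\CS(n)\not\equiv\cog_\CT(n)\bmod 2^{40}$ for some $n$, the smallest such $n$ is bounded by $\Tow(\Tow(\Tow(\phi(\CS)+\phi(\CT))))$. Since the triple tower function is computable, this would yield a decision procedure for the problem of Theorem~\ref{t:maintheorem}: on input $(\CS,\CT)$, first compute $B:=\Tow(\Tow(\Tow(\phi(\CS)+\phi(\CT))))$, then check the congruence for all $n\leq B$ using the uniform procedure above; output ``congruent for all $n$'' iff no $n\leq B$ violates it. This contradicts the undecidability established in Theorem~\ref{t:maintheorem}. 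Therefore some pair $(\CS,\CT)$ must fail the bound, which is exactly the content of Corollary~\ref{c:Tow}.

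The only obstacle is really bookkeeping: one must verify that $\phi(\CS)$ is a sensible and computable measure of input size (it is, since each matrix in $\CS$ is given by its finitely many integer entries, and $\phi$ just sums their absolute values). Nothing in the argument is specific to three towers or to the function $\Tow$; the same reasoning shows that for any total computable $f:\nn\to\nn$ there exist pairs $(\CS,\CT)$ whose first counterexample $n$ exceeds $f(\phi(\CS)+\phi(\CT))$, which matches the footnote in the statement allowing $\Tow(\phi)$-many iterated towers or any other computable replacement.
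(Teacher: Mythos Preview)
Your argument is correct and is exactly the standard computability argument the paper alludes to (the paper itself only writes ``follows from basic results on computability'' without spelling out the details). In particular, your negation is set up correctly: assuming every pair $(\CS,\CT)$ \emph{for which some $n$ violates the congruence} has its first violation bounded by the triple tower yields a decision procedure, and the negation of that hypothesis produces a pair with a violation whose first occurrence exceeds the bound, which is precisely the statement of the corollary.
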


Here \ts $\Tow(k)$ \ts is the tower of $2$'s of length~$k$.  While a single
tower is unusual but does occur for natural combinatorial problems,
see e.g.\ \cite{Gow97,HNP21}, the iterated towers get us close to the edge of
human imagination.

In the context of cogrowth sequences, we can only think of \cite{Moo13} which
proves a single tower lower bound on the size of the F\o{}lner sets for the Thompson's
group~$F$.  This does not refute the conjecture that $F$ is nonamenable
(cf.~\cite[$\S$5.4]{Sap11}), but suggests that the proof would be rather involved.
We refer to a curious numerical investigation of the cogrowth sequence \cite{PG19}
(see also \cite{HHR15}), strongly suggesting nonamenability.

\subsection{Unitriangular group}\label{ss:finrem-Uni}
Jennings famously proved in~\cite{Jen55} (see also~\cite{GW17}), that every
torsion-free nilpotent group is a subgroup of the unitriangular group
\ts $\UT(m,\zz)$ \ts for some~$m$.  This explains why we chose to work
with the unitriangular group towards Kontsevich's question for nilpotent
groups.  In fact, this can be stated formally:  if the analogue of
Theorem~\ref{t:maintheorem} holds for \emph{some} \ts nilpotent group
and its families of generating sets, then the ``using multiple copies of
extra generators'' trick used in~$\S$\ref{ss:main-proof-three} one can
still obtain the first part of Theorem~\ref{t:maintheorem}.  

\subsection{Heisenberg group}\label{ss:finrem-Heis}
For the Heisenberg group \ts $H_1=\UT(3,\zz)$ \ts with natural generators,
the first 71 terms were computed by Pantone, see
\cite[\href{https://oeis.org/A307468}{A307468}]{OEIS}.  His analysis
suggests that there are no lower order algebraic differential equation
(ADE) for the cogrowth series.  We conjecture that this cogrowth series
is not D-algebraic.  Thus, in particular, it is non-D-finite and not
a diagonal.

Continuing the discussion of Stoll's example in~$\S$\ref{ss:intro-hist},
there is a deeper reason why $H_1$ has simpler structure than the higher
Heisenberg group $H_2\ssu \UT(4,\zz)$, see \cite{NY18}. In fact, from metric
geometry point of view, group $H_2$ is the ``most distorted'' relative
to the abelian group, see \cite{Naor18}.  Additionally, every equation
is decidable in $H_1$ \cite[$\S$2.2]{DLS15}, and there are relatively
few distinct words \cite{GL22}.  Thus, if one is looking for
a conceptual proof of non-D-finiteness in a smaller example,
perhaps $H_2$ or $\UT(4,\zz)$ is a better place to start than~$H_1$.

\subsection{Dependence on the generators}\label{ss:finrem-dep}
A deep problem for cogrowth series is whether their properties
depend on the generating set.  For D-finiteness we have a partial
answer: they do not for free groups and amenable groups
of superpolynomial growth (see $\S$\ref{ss:intro-hist}).
We conjecture that they do not for virtually nilpotent group
as well.  We are at loss what happens to general nonamenable
groups, but that's where we would look for counterexamples.

\subsection{Abelian groups}\label{ss:finrem-abelian}
Kuksov's Theorem~\ref{t:Kuksov} holds for general abelian groups.  We found
an alternative proof using binomial sums, which implies a stronger
statement: that the cogrowth series is always a diagonal of
an $\nn$-rational function, see \cite{GP14}.
It would be interesting to extend Theorem~\ref{t:Kuksov} to
other tame classes of group.  We conjecture that the cogrowth
series for a virtually abelian group is always a diagonal
of a rational function. Thus, in particular, it is D-finite.

\subsection{Christol's conjecture}\label{ss:finrem-Christol}
There is a healthy debate in the literature about the validity of Christol's
Conjecture~\ref{conj:Christol}. A large number of potential counterexamples were
suggested by Christol himself and his coauthors \cite{B+13,Chr14}.  A few
of these were recently refuted, i.e.\ shown to be diagonals of rational
functions \cite{AKM20,BY22}.  It would be most exciting if there is an
uncomputability result analogous to Theorem~\ref{t:diag} in this setting.

\subsection{Explicit construction}\label{ss:finrem-explicit}
The construction of generating sets in Corollary~\ref{c:ZFC} can be made explicit
if one uses an explicit construction of a Diophantine equation whose solution is
independent of~ZFC. This equation, in principle, can be obtained from an
explicit construction of a Turing machine whose halting is independent
of~ZFC, see~\cite{YA16} and follow the approach in~\cite{CM14}.
We would be curious to see the resulting numerical bounds on
the size of the resulting generating sets.

\vskip.8cm

\subsection*{Acknowledgements}
We are grateful Art\"em Chernikov, Gilles Christol, Pierre de~la~Harpe, 
Mark van Hoeij, Boris Moroz and Michael Stoll for discussions and helpful comments.
Special thanks to Boris Adamczewski and Jason Bell for telling us
about their paper~\cite{AB13}, to Andrew Marks for pointing out a
gap in our original argument, and to  Yuri Matiyasevich for help
with the references.  The first author was partially
supported by the~NSF.

\vskip.3cm

\begin{center}\begin{minipage}{13cm}%
{\em
This paper was finished soon after the death of Mark Sapir.  Over the years,
the first author had many conversations with Mark, whose wit and generosity
were delightful and educational.  We dedicate this paper to his memory. }
\end{minipage}\end{center}

\vskip1.cm

\end{document}